\documentclass[10pt, a4paper]{amsart}
 

\usepackage[T1]{fontenc}
\usepackage{microtype}

\usepackage{amsmath}								
\usepackage{amssymb}
\usepackage{amsthm}
\usepackage{amscd}
\usepackage{amsfonts}
\usepackage{bbm}
\usepackage{stmaryrd}

\usepackage{extarrows}

\usepackage[bookmarks=true, colorlinks=true, linkcolor=blue!50!black,
citecolor=orange!50!black, urlcolor=orange!50!black, pdfencoding=unicode]{hyperref}
\usepackage{color}
\usepackage{comment}
\usepackage{mathrsfs}

\usepackage{tikz}									
\usetikzlibrary{matrix}
\usetikzlibrary{patterns}
\usetikzlibrary{matrix}
\usetikzlibrary{positioning}
\usetikzlibrary{decorations.pathmorphing}
\usetikzlibrary{cd}
\usetikzlibrary{trees}

\usepackage{fullpage}
\usepackage{enumerate}

\linespread{1.15}



\newtheorem*{theorem*}{Theorem}

\newtheorem{maintheorem}{Theorem}[section]

\newtheorem{theorem}{Theorem}[section]
\newtheorem{lemma}[theorem]{Lemma}
\newtheorem{proposition}[theorem]{Proposition}
\newtheorem{corollary}[theorem]{Corollary} 

\theoremstyle{definition}
\newtheorem{definition}[theorem]{Definition}
\newtheorem{example}[theorem]{Example}

\newtheorem{remark}[theorem]{Remark}


\newtheoremstyle{myitemstyle}						
	{}			
	{}			
	{}			
	{}			
	{}			
	{.}			
	{ }			
	{}			
\theoremstyle{myitemstyle}
\newtheorem{myitemthm}{}


\newcommand{\martin}[1]{{\color{green!60!black} \sf Martin: [#1]}}

\definecolor{chocolate}{RGB}{143,89,2}

\newcommand{\hooklongrightarrow}{\lhook\joinrel\longrightarrow}
\newcommand{\twoheadlongrightarrow}{\relbar\joinrel\twoheadrightarrow}

\newcommand{\R}{\mathbb{R}}
\newcommand{\Rbar}{\overline{\mathbb{R}}}
\newcommand{\Z}{\mathbb{Z}}

\newcommand{\Q}{\mathbb{Q}}
\renewcommand{\C}{\mathbb{C}}

\renewcommand{\P}{\mathbb{P}}
\newcommand{\PP}{\mathbb{P}}

\newcommand{\OO}{\mathcal{O}}

\newcommand{\RR}{\mathbb{R}}
\newcommand{\A}{\mathbb{A}}
\renewcommand{\G}{\mathbb{G}}

\newcommand{\TT}{\mathbb{T}}
\newcommand{\T}{\mathbb{T}}
\newcommand{\on}{\operatorname{}}

\newcommand{\calA}{\mathcal{A}}
\newcommand{\calAbar}{\overline{\mathcal{A}}}
\newcommand{\calB}{\mathcal{B}}
\newcommand{\calBbar}{\overline{\mathcal{B}}}
\newcommand{\scrB}{\mathscr{B}}

\newcommand{\calI}{\mathcal{I}}

\newcommand{\calL}{\mathcal{L}}

\newcommand{\calN}{\mathcal{N}}
\newcommand{\calNbar}{\overline{\mathcal{N}}}

\newcommand{\calX}{\mathcal{X}}
\newcommand{\calXbar}{\overline{\mathcal{X}}}

\DeclareMathOperator{\Spec}{Spec}
\DeclareMathOperator{\Proj}{Proj}

\DeclareMathOperator{\val}{val}

\DeclareMathOperator{\Trop}{Trop}

\DeclareMathOperator{\pr}{pr}
\DeclareMathOperator{\id}{id}

\DeclareMathOperator{\GL}{GL}

\DeclareMathOperator{\PGL}{PGL}
\DeclareMathOperator{\trop}{trop}

\DeclareMathOperator{\an}{an}
\DeclareMathOperator{\univ}{univ}
\DeclareMathOperator{\Gr}{Gr}
\DeclareMathOperator{\rank}{rank}
\DeclareMathOperator{\diag}{diag}

\newcommand{\aset}[1]{\left\{ #1  \right\}}
\newcommand{\suchthat}{\big\vert \,}
\newcommand{\mbases}[1]{\mathscr{B}\left(#1  \right)}
\newcommand{\allone}{\mathbbm{1}}

\newcommand{\seminorm}[1]{\left\lVert #1  \right \rVert}


\title{Buildings, valuated matroids, and tropical linear spaces} 

\date{}

\author{Luca Battistella}
\address{Institut für Mathematik, Humboldt--Universität zu Berlin, 10099 Berlin, Germany}
\email{luca.battistella@hu-berlin.de}

\author{Kevin K\"uhn}
\address{Institut f\"ur Mathematik, Goethe--Universit\"at Frankfurt,
60325 Frankfurt am Main, Germany}
\email{kuehn@math.uni-frankfurt.de}

\author{Arne Kuhrs}
\address{Institut f\"ur Mathematik, Goethe--Universit\"at Frankfurt,
60325 Frankfurt am Main, Germany}
\email{kuhrs@math.uni-frankfurt.de}

\author{Martin Ulirsch}
\address{Institut f\"ur Mathematik, Goethe--Universit\"at Frankfurt,
60325 Frankfurt am Main, Germany}
\email{ulirsch@math.uni-frankfurt.de}

\author{Alejandro Vargas}
\address{Institut f\"ur Mathematik, Goethe--Universit\"at Frankfurt,
60325 Frankfurt am Main, Germany}
\email{alejandro@vargas.page}


\begin{document}

\maketitle

\begin{abstract}
Affine Bruhat--Tits buildings are geometric spaces extracting the combinatorics of algebraic groups. The building of $\mathrm{PGL}$ parametrizes flags of subspaces/lattices in or, equivalently, norms on a fixed finite-dimensional vector space, up to homothety. It has first been studied by Goldman and Iwahori as a piecewise-linear analogue of symmetric spaces. 
The space of seminorms compactifies the space of norms and admits a natural  surjective restriction map from the Berkovich analytification of projective space that factors the natural tropicalization map. 
Inspired by Payne's result that the analytification is the limit of all tropicalizations, we show that the space of  seminorms is the limit of all tropicalized \emph{linear} embeddings $\iota\colon\mathbb{P}^r\hookrightarrow\mathbb{P}^n$ and prove  a faithful tropicalization result for compactified linear spaces. The space of seminorms is in fact the tropical linear space associated to the  universal realizable valuated matroid. \end{abstract}

\setcounter{tocdepth}{1}
\tableofcontents


\section*{Introduction}
Let $K$ be a complete non-Archimedean field (possibly carrying the trivial absolute value). The non-Archimedean analytic approach to tropical geometry (see e.g.~\cite{BPR, EinsiedlerKapranovLind, Gubler_guide, Payne_anallimittrop}) 
makes crucial use of non-Archimedean analytic spaces in the sense of Berkovich \cite{Berkovich_book}; the particular topological properties of these Berkovich spaces let us think of the analytification $X^{\an}$ of an algebraic variety $X$ as a form of universal tropicalization that is independent of the chosen coordinate system. 
In this regard, the central result presented in \cite{Payne_anallimittrop} tells us that the analytification $X^{\an}$ of a quasi-projective algebraic variety $X$ over $K$ is the projective limit of all tropicalizations with respect to all the embeddings in toric varieties
(also see \cite{GrossFosterPayne, GiansiracusaGiansiracusa, KuronyaSouzaUlirsch} for generalizations of this result). 

In this article we argue that when considering only the tropicalization of projective spaces linearly embedded into higher-dimensional projective spaces, the role of Berkovich analytic space is played by the Goldman--Iwahori space $\calXbar_r(K)$ of homothety classes of non-trivial seminorms on $(K^{r+1})^\ast$ (see \cite{GoldmanIwahori} but also \cite{RemyThuillierWerner_survey, Werner_seminorms,RemyThuillierWernerII}).
The locus $\calBbar_r(K)$ of diagonalizable seminorms in $\calXbar_r(K)$ is  (a compactification of) the affine Bruhat--Tits building  of $\PGL_{r+1}(K)$, when $K$ carries a non-trivial valuation, and the cone over the spherical building of $\PGL_{r+1}(K)$, when the valuation on $K$ is trivial. When $K$ is spherically complete, we have $\calBbar_r(K)=\calXbar_r(K)$.

Let $\iota\colon \P^r \hookrightarrow \P^n$ be a linear closed immersion. The tropicalization $\Trop(\P^r,\iota)$ of $\P^r$ with respect to the embedding $\iota$ is the projection of  $\iota(\P^r)^{\an}\subseteq (\P^n)^{\an}$ under the natural tropicalization map
\begin{equation*}
\trop_{\P^n}\colon (\P^n)^{\an}\longrightarrow \T\P^n\\
\end{equation*}
to $\T\P^n=\Rbar^{n+1}-\big\{(\infty,\dots,\infty)\big\}/\ \R \cdot \mathbbm{1}$ that is essentially given by taking coordinate-wise valuations (see Section \ref{section_analytification&tropicalization} below for details). 
The tropicalization $\Trop(\P^r,\iota)$ is a projective tropical linear space in $\T\P^n$ and is associated to a realizable valuated matroid (see Section \ref{section_valuatedmatroids} below for details).

We shall see in Section \ref{section_analytification&tropicalization} below that there is a natural continuous and surjective restriction map $\tau\colon (\P^n)^{\an} \rightarrow  \calXbar_n(K)$ that factors the tropicalization map as 
\begin{equation*}
    (\P^n)^{\an}\xlongrightarrow{\tau} \calXbar_n(K)\xlongrightarrow{\trop_{\calXbar_n}} \T\P^n
\end{equation*}
such that the tropicalization of $\Trop(\P^r,\iota)$ is given as the projection of $\calXbar_r(K)\subseteq\calXbar_n(K)$ under $\trop_{\calXbar_n}$. 
The connection between affine buildings and Berkovich analytic spaces is well-established in the literature. We refer the reader in particular to \cite{Berkovich_book} as well as to \cite{RemyThuillierWernerI, RemyThuillierWernerII, RemyThuillierWerner_survey}. 

Denote by $I$ the category whose objects are linear closed immersions $\iota\colon \P^r\hookrightarrow U\subseteq\P^n$ into a torus-invariant open subset $U\subseteq \P^n$; a morphism between $\iota\colon \P^r\hookrightarrow U\subseteq\P^n$ and $\iota'\colon \P^r\hookrightarrow U'\subseteq\P^{n'}$ is given by a linear toric morphism $\varphi\colon U\rightarrow U'$ such that $\varphi\circ \iota=\iota'$. 
The tropicalization with respect to $\iota\colon \P^r\hookrightarrow U$ is naturally homeomorphic to the tropicalization with respect to the composition $\P^r\hookrightarrow U\subseteq \P^n$. For a toric morphism $\varphi\colon U\rightarrow U'$ such that $\varphi\circ \iota=\iota'$, we have a natural induced map $\varphi^{\trop}\colon U^{\trop}\rightarrow U'^{\trop}$ such that $\varphi^{\trop}\big(\Trop(\P^r,\iota)\big)\subseteq \Trop(\P^r,\iota')$, making $I$ into a cofiltered category. Requiring morphisms only to be defined on an open subset of projective space provides us with the added flexibility that we need in the following.

\begin{maintheorem}\label{mainthm_limits}
The tropicalization maps induce a natural homeomorphism 
\begin{equation*}
\overline{\calX}_r(K)\xlongrightarrow{\sim}\varprojlim_{\iota \in I} \Trop\big(\PP^r,\iota\big) \ ,
\end{equation*}
where the projective limit is taken over the category $I$. 
\end{maintheorem}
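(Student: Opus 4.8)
The plan is to construct the comparison map explicitly, observe that it is a continuous map of compact Hausdorff spaces, and then prove it is bijective. A linear closed immersion $\iota\colon\PP^r\hookrightarrow U\subseteq\PP^n$ is the datum of a surjection $(K^{n+1})^\ast\twoheadrightarrow(K^{r+1})^\ast$ on degree-one parts, and pulling back seminorms along it realizes $\overline{\calX}_r(K)$ as a closed subspace of $\overline{\calX}_n(K)$; as recalled in the discussion preceding the theorem, the composite $\overline{\calX}_r(K)\hookrightarrow\overline{\calX}_n(K)\xrightarrow{\trop_{\overline{\calX}_n}}\TT\PP^n$ has image exactly $\Trop(\PP^r,\iota)$. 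Functoriality of tropicalization makes these maps compatible with the morphisms of $I$, so they assemble into a continuous map $\pi\colon\overline{\calX}_r(K)\to\varprojlim_{\iota\in I}\Trop(\PP^r,\iota)$. The source is compact by Goldman--Iwahori, and the target is a closed subspace of the product of the compact Hausdorff spaces $\TT\PP^n$, hence compact Hausdorff, so it is enough to prove that $\pi$ is bijective.

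For injectivity I would fix a basis $\ell_0^{\mathrm{std}},\dots,\ell_r^{\mathrm{std}}$ of $(K^{r+1})^\ast$; for every nonzero $\ell\in(K^{r+1})^\ast$ the tuples $(\ell_i^{\mathrm{std}})_i$ and $(\ell_i^{\mathrm{std}})_i\cup\{\ell\}$ span, hence define objects of $I$ joined by the coordinate projection dropping $\ell$. If $\pi([\gamma])=\pi([\gamma'])$, then equality of the components at $(\ell_i^{\mathrm{std}})_i\cup\{\ell\}$ yields a scalar $c(\ell)>0$ with $\gamma(\ell_i^{\mathrm{std}})=c(\ell)\,\gamma'(\ell_i^{\mathrm{std}})$ for all $i$ and $\gamma(\ell)=c(\ell)\,\gamma'(\ell)$; comparing with the component at $(\ell_i^{\mathrm{std}})_i$ shows $c(\ell)$ does not depend on $\ell$ (some $\gamma'(\ell_i^{\mathrm{std}})\ne 0$ since $\gamma'\ne 0$ and the $\ell_i^{\mathrm{std}}$ span). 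Hence $\gamma$ and $\gamma'$ are proportional and $[\gamma]=[\gamma']$.

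For surjectivity, given a compatible system $(x_\iota)_\iota$, I would fix a representative $\mathbf a\in\Rbar^{r+1}\smallsetminus\{(\infty,\dots,\infty)\}$ of the component at the identity embedding $\mathrm{id}\colon\PP^r\hookrightarrow\PP^r$ (for the fixed basis), and for each nonzero $\ell$ use the coordinate projection from $\iota_\ell$ (the object with tuple $(\ell_i^{\mathrm{std}})_i\cup\{\ell\}$) to $\mathrm{id}$ to single out the unique representative $(\mathbf a,c_\ell)$ of $x_{\iota_\ell}$ whose first $r+1$ coordinates are $\mathbf a$; set $\gamma(\ell):=\exp(-c_\ell)$ and $\gamma(0):=0$. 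Using only morphisms of $I$ of the form coordinate projection, coordinate permutation, and rescaling of a single homogeneous coordinate by $t$ (whose tropicalization translates that coordinate by $\val t$), one checks that $\gamma$ is well defined and positively homogeneous, $\gamma(t\ell)=|t|\,\gamma(\ell)$, and, more precisely, that for any finite spanning tuple $\mathbf m=(m_1,\dots,m_k)$ containing $(\ell_i^{\mathrm{std}})_i$, with object $\iota_{\mathbf m}\in I$, the representative of $x_{\iota_{\mathbf m}}$ normalized to have $\mathbf a$ in its $\ell_i^{\mathrm{std}}$-coordinates equals $(\mathbf a,(-\log\gamma(m_j))_j)$; moreover $\gamma\ne 0$ because not every $a_i$ equals $\infty$. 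This last normalization statement, together with functoriality of $\trop$ and the compatible-system condition, will also yield $\pi([\gamma])=(x_\iota)_\iota$ once $\gamma$ is known to be a seminorm, by reducing an arbitrary $\iota\in I$ to an enlargement of its tuple by the standard basis.

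The only step that is not purely formal — and where I expect all of the real content to lie — is the ultrametric inequality $\gamma(\ell+\ell')\le\max\{\gamma(\ell),\gamma(\ell')\}$, which cannot follow from the morphisms of $I$ alone, since adding two homogeneous coordinates is not a toric operation. Here I would use that $\Trop(\PP^r,\iota)$ is not an arbitrary subset of $\TT\PP^n$ but the tropical linear space of the realizable valuated matroid attached to $\iota$ (Section~\ref{section_valuatedmatroids}): its points satisfy the tropical hyperplane relation of every linear form in the homogeneous ideal of $\iota(\PP^r)$, equivalently the tropical circuit relations of the matroid. By the homogeneity already established I may assume that $\ell$, $\ell'$ and $\ell+\ell'$ are pairwise non-proportional and nonzero; then $1\cdot\ell+1\cdot\ell'-1\cdot(\ell+\ell')=0$ is a circuit relation with all coefficients of valuation $0$, so for the object $\iota\in I$ with tuple $(\ell_i^{\mathrm{std}})_i\cup\{\ell,\ell',\ell+\ell'\}$, evaluating the associated tropical relation at $x_\iota$ in the above normalization — where the last three coordinates are $-\log\gamma(\ell),-\log\gamma(\ell'),-\log\gamma(\ell+\ell')$ — forces the minimum of these three numbers to be attained at least twice, whence $-\log\gamma(\ell+\ell')\ge\min\{-\log\gamma(\ell),-\log\gamma(\ell')\}$. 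Then $\gamma$ is a nonzero seminorm on $(K^{r+1})^\ast$ with $\pi([\gamma])=(x_\iota)_\iota$, so $\pi$ is a continuous bijection of compact Hausdorff spaces, hence the desired homeomorphism.
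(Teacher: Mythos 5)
Your proof is correct, and while the overall scaffolding (compact Hausdorff, hence bijectivity suffices; pick a reference embedding to normalize the seminorm; extend coordinate-by-coordinate using auxiliary embeddings joined by coordinate projections) coincides with the paper's, you take a genuinely different route at the single step where you yourself note all the content lies: proving that the constructed function is a seminorm. The paper invokes Proposition~\ref{prop:tropIsProj} (built on surjectivity of the restriction map $\tau\colon(\P^n)^{\an}\to\calXbar_n(K)$), so that every compatible-system coordinate $y_\jmath\in\Trop(\P^r,\jmath)$ is realized by an honest seminorm $\lVert\cdot\rVert'$, and then \emph{pulls back} the axioms $\lVert\lambda f\rVert'=|\lambda|\lVert f\rVert'$ and $\lVert f+g\rVert'\le\max\{\lVert f\rVert',\lVert g\rVert'\}$ to the constructed $\gamma$. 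You instead argue purely combinatorially: the scaling axiom comes from the observation that rescaling a single homogeneous coordinate by $t\in K^\times$ is a toric morphism in $I$ whose tropicalization is a translation by $\val(t)$, and the ultrametric inequality comes from the identification $\Trop(\P^r,\iota)=\calL(v)$ of Theorem~\ref{thm:TropIsTrop} together with the tropical circuit relation for the circuit $\{\ell,\ell',\ell+\ell'\}$ (with coefficients of valuation $0$). Both routes import a nontrivial external fact --- surjectivity of $\tau$ in the paper's case, Speyer's theorem on tropicalized linear spaces in yours --- and both are of comparable weight; your version makes the matroidal structure that is implicitly driving the result explicit, and more cleanly isolates precisely which morphisms of $I$ are used (coordinate projections, permutations, and single-coordinate rescalings), at the cost of needing to verify that the size-$3$ circuit $\{\ell,\ell',\ell+\ell'\}$ extends to a set $\tau$ of cardinality $r+2$ on which the defining condition of $\calL(v)$ localizes to it (this is standard: extend the rank-$2$ span of the circuit by $r-1$ vectors from the standard basis chosen transverse to it, so that $\tau\setminus e$ is a basis precisely for $e$ in the circuit). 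Your injectivity argument is essentially the paper's, phrased as extracting a single homothety constant $c(\ell)$ independent of $\ell$ rather than splitting into a kernel comparison plus a ratio comparison.
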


Let $I'$ be the full subcategory of $I$, whose objects are linear embeddings $\iota\colon \P^r\hookrightarrow U\subseteq\P^n$ whose images meet the big torus $\G_m^n\subseteq \P^n$. Then Theorem \ref{mainthm_limits} provides us with a homeomorphism between the 
space of norms $\calX_r(K)$ and the projective limit of all non-compactified tropicalizations $\Trop\big(\iota(\P^r)\cap\G_m^n\big)$. When $K$ is spherically complete, the Goldman--Iwahori space $\calX_r(K)$ is equal to  $\calB_r(K)$, and therefore we have a natural homeomorphism 
\begin{equation*}
\calB_r(K)\xlongrightarrow{\sim}\varprojlim_{\iota \in I'} \Trop\big(\iota(\P^r)\cap\G_m^n\big) \ . 
\end{equation*}

In addition to Theorem \ref{mainthm_limits} we also prove the following, 
which one may think of as a new addition to the literature on faithful tropicalization (see e.g.~\cite{BPR, CuetoWernerHaebich, GRWI,GRWII} for other instances).

\begin{maintheorem}\label{mainthm_section}
Let $\iota\colon\P^r\hookrightarrow \P^n$ be a  linear closed immersion. Then there is a natural piecewise linear embedding $J\colon \Trop(\P^r,\iota)\rightarrow \calBbar_r(K)$ that makes the following diagram commute
\begin{equation*}
\begin{tikzcd}
\calBbar_r(K)\arrow[rd,"\calBbar(\iota)"']&&\Trop(\P^r,\iota) \arrow[rd, "\subseteq"]\arrow[ll, "J"']&\\
& \calBbar_n(K)\arrow[rr,"\trop"]&& \T\P^n.
\end{tikzcd}
\end{equation*}
\end{maintheorem}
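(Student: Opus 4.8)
The plan is to construct $J$ explicitly as a quotient-seminorm map and to prove the section property by lifting tropical points to the Berkovich space. First I would fix coordinates so that $\iota$ is dual to a linear surjection $q\colon(K^{n+1})^\ast\twoheadrightarrow(K^{r+1})^\ast$, and write $a_i:=q(e_i^\ast)\in(K^{r+1})^\ast$; since $\iota$ is a closed immersion the $a_i$ span $(K^{r+1})^\ast$, and they are exactly the pullbacks $\iota^\ast x_i=a_i$ of the coordinates $x_i$ on $\P^n$. With the conventions of Section~\ref{section_analytification&tropicalization}, $\calBbar(\iota)$ sends $[\mu]$ to $[\mu\circ q]$ and $\trop_{\calBbar_n}$ records the values on the coordinate vectors, so the composite $\pi:=\trop\circ\calBbar(\iota)\colon\calBbar_r(K)\to\T\P^n$ is $[\mu]\mapsto\big[(-\log\mu(a_0),\dots,-\log\mu(a_n))\big]$, with image contained in $\Trop(\P^r,\iota)$. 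It therefore suffices to produce a piecewise linear section $J$ of $\pi$ defined on $\Trop(\P^r,\iota)$ and landing in $\calBbar_r(K)$; this will in particular re-prove $\Trop(\P^r,\iota)=\mathrm{im}(\pi)$ and give the commutativity of the diagram as the identity $\pi\circ J=\id$.

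For $w\in\Trop(\P^r,\iota)$ with a representative $(w_0,\dots,w_n)\in\Rbar^{n+1}\setminus\{(\infty,\dots,\infty)\}$, I would let $\nu_w$ be the diagonal seminorm on $(K^{n+1})^\ast$ with $\nu_w(e_i^\ast)=e^{-w_i}$ and set $\mu_w:=q_\ast\nu_w$, the quotient seminorm $\mu_w(\phi)=\inf\{\nu_w(\psi):q(\psi)=\phi\}$. This is a seminorm, finite because $q$ is surjective; replacing $(w_i)$ by $(w_i+c)$ only rescales $\mu_w$ by $e^{-c}$, so $[\mu_w]$ is well defined, and $\mu_w(a_i)\le\nu_w(e_i^\ast)=e^{-w_i}$ for all $i$. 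The crux is the reverse inequality $\mu_w(a_i)\ge e^{-w_i}$; granting it, $\mu_w$ is non-trivial (some $w_i<\infty$), $\pi([\mu_w])=w$, and $J(w):=[\mu_w]$ is the desired section. To see $[\mu_w]\in\calBbar_r(K)$ I would exhibit an orthogonal basis: choosing a $w$-optimal basis $B$ of the matroid realized by the $a_i$ (which exists by the greedy algorithm, using the exchange axioms for valuated matroids recalled in Section~\ref{section_valuatedmatroids}), one checks that $\{a_i:i\in B\}$ is $\mu_w$-orthogonal with $\mu_w(a_i)=e^{-w_i}$; by Cramer's rule this also yields the closed formula $-\log\mu_w(\phi)=\max_{B}\min_{i\in B}\big(v(c^B_i(\phi))+w_i\big)$, where $v$ is the valuation, $c^B_i(\phi)$ are the coordinates of $\phi$ in $\{a_i:i\in B\}$, and the maximum ranges over bases $B$. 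This formula is piecewise linear in $w$ on the cells of $\Trop(\P^r,\iota)$, so $J$ is piecewise linear; and since $\Trop(\P^r,\iota)$ is compact (a closed subset of the compact $\T\P^n$, being the image of $\iota(\P^r)^{\an}$) while $\calXbar_r(K)$ is Hausdorff, the continuous injection $J$ — injective because $\pi\circ J=\id$ — is a homeomorphism onto its image and hence a piecewise linear embedding.

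It remains to prove $\mu_w(a_i)\ge e^{-w_i}$ for $w\in\Trop(\P^r,\iota)$; this is where the hypothesis is used and is the heart of the argument. By definition $\Trop(\P^r,\iota)=\trop_{\P^n}\big(\iota(\P^r)^{\an}\big)$, so there is $x\in\iota(\P^r)^{\an}$ with $\trop_{\P^n}(x)=w$; since $\iota^{\an}$ is a homeomorphism onto its image, $x=\iota^{\an}(y)$ for a unique $y\in(\P^r)^{\an}$. Restricting the seminorm associated to $y$ to the space of linear forms $(K^{r+1})^\ast$ (the analogue for $\P^r$ of the map $\tau$) produces a seminorm $\bar\nu_y$ with $\bar\nu_y(a_i)=|x_i|_x=e^{-w_i}$ for all $i$. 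For any representation $\phi=\sum_i c_ia_i$, the ultrametric inequality and homogeneity give $\bar\nu_y(\phi)\le\max_i|c_i|\,\bar\nu_y(a_i)=\max_i|c_i|e^{-w_i}$; taking the infimum over representations yields $\bar\nu_y\le\mu_w$, whence $e^{-w_i}=\bar\nu_y(a_i)\le\mu_w(a_i)$, as needed.

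I expect the main obstacle to be the bookkeeping for diagonalizability and for the piecewise linear structure, not the identity $\mu_w(a_i)=e^{-w_i}$, which is cheap once the analytic lift $y$ is available. Concretely, one must check that the $w$-optimal basis $B$ can be chosen so that $\{a_i:i\in B\}$ is orthogonal against all exchange relations lying in $\ker q$ simultaneously — an induction on the support using the basis-exchange property of valuated matroids — and that the resulting tropical Plücker formula for $J$ is compatible with the polysimplicial piecewise linear structure on the building $\calBbar_r(K)$. The topological embedding statement, by contrast, is immediate from compactness of $\Trop(\P^r,\iota)$.
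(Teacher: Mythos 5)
Your approach is genuinely different from the paper's and, suitably filled in, it works; let me compare them and then name the one real gap. The paper constructs $J$ locally: it covers $\Trop(\P^r,\iota)$ by local tropical linear spaces $\Trop(\P^r,\iota)_B$, builds on each a piecewise linear homeomorphism $J_B$ to the compactified apartment $\calAbar(B)$ via $u\mapsto\seminorm{\cdot}_{B,u_B}$ (Proposition~\ref{prop_section_local}), and then spends the bulk of the proof verifying that these glue, i.e.\ that $\seminorm{\cdot}_{A,u_A}$ and $\seminorm{\cdot}_{B,u_B}$ are homothetic whenever both $A$ and $B$ are bases of the initial matroid $M_u$, reducing to $|A\,\Delta\,B|=2$ and chasing Cramer coefficients. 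Your proposal instead defines $J(w)=[\mu_w]$ globally as a quotient seminorm $\mu_w=q_\ast\nu_w$, which makes the gluing automatic: once you know $\mu_w=\seminorm{\cdot}_{B,w_B}$ for one basis $B$ of $M_w$, you get it for all of them for free, since $\mu_w$ is defined without any basis choice. That is a real simplification of the hardest part of the paper's argument.

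The gap is in the step you flag yourself as the ``main obstacle'': the diagonalizability of $\mu_w$. You try to get the lower bound $\mu_w(a_i)\ge e^{-w_i}$ by lifting $w$ to a point $y\in(\P^r)^{\an}$ and restricting to $\bar\nu_y$; this does prove $\bar\nu_y\le\mu_w$ and hence $\mu_w(a_i)=e^{-w_i}$, but that is strictly weaker than the orthogonality you need. To conclude $[\mu_w]\in\calBbar_r(K)$ you must show $\mu_w\big(\textstyle\sum_{i\in B}c_ia_i\big)\ge\max_{i\in B}|c_i|\,e^{-w_i}$, and the only information you have from the lift is $\bar\nu_y\le\mu_w$ for \emph{some} lift $y$; nothing forces $\bar\nu_y$ itself to be diagonal with respect to $\{a_i:i\in B\}$, so you cannot read off orthogonality of $\mu_w$ from it. In other words, the lift gives you agreement of $\mu_w$ with the building point on the $a_i$'s, not the required domination $\seminorm{\cdot}_{B,w_B}\le\mu_w$.

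The correct argument for that step is purely combinatorial and makes the analytic lift unnecessary. Fix a basis $B$ of the initial matroid $M_w$. The content of Proposition~\ref{prop_section_local} (via Lemma~\ref{lem:Cramer} and the tropical Pl\"ucker relation for $\tau=B\cup\{k\}$) is precisely that $\seminorm{a_j}_{B,w_B}=e^{-w_j}$ for \emph{every} $j\in E$, not just $j\in B$. Given that, for any representation $\phi=\sum_j d_j a_j$ one has
\begin{equation*}
\seminorm{\phi}_{B,w_B}\;\le\;\max_j |d_j|\,\seminorm{a_j}_{B,w_B}\;=\;\max_j |d_j|\,e^{-w_j},
\end{equation*}
and taking the infimum over representations gives $\seminorm{\cdot}_{B,w_B}\le\mu_w$. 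Together with the trivial inequality $\mu_w\le\seminorm{\cdot}_{B,w_B}$, this shows $\mu_w=\seminorm{\cdot}_{B,w_B}$, hence $\mu_w$ is diagonalizable, $\mu_w(a_i)=e^{-w_i}$, and your closed formula follows. So your overall route is sound and arguably slicker than the paper's, but the crux — the identity $\mu_w=\seminorm{\cdot}_{B,w_B}$ — is asserted rather than proved, and the tool you reach for (the analytic lift) does not by itself deliver it.

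Two smaller points: the ``induction on the support using basis exchange'' you sketch for orthogonality is not needed once one has the inequality above, which uses the tropical Pl\"ucker relation rather than repeated exchanges. And the compactness/Hausdorff argument for $J$ being a topological embedding is fine, but the paper proves a sharper statement (local homeomorphism with inverse $\pi_\iota$ on each $\Trop(\P^r,\iota)_B$) by direct computation rather than by soft topology; your soft argument is correct but loses the explicit description of the inverse.
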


Suppose now that $K$ is discretely valued. Then, for one, the section in Theorem \ref{mainthm_section} recovers the so-called \emph{membrane} of a realization of $\Trop(\P^r,\iota)$ from \cite[Lemma 4]{JoswigSturmfelsYu}. If  $K$ is also local, there is a natural embedding of $\calBbar_r(K)=\calXbar_r(K)$ into the Berkovich analytic space $(\P^r)^{\an}$ (see \cite[Section 3]{RemyThuillierWernerII} for details). Theorem \ref{mainthm_limits} then tells us that the collection of all linear reembeddings $\P^r\hookrightarrow \P^n$ recovers exactly $\calBbar_r(K)=\calXbar_r(K)$. The main result of \cite{Payne_anallimittrop}, on the other hand, tells us that, once we also allow non-linear algebraic re-embeddings of $\P^r$ into suitable toric varieties, we recover the whole Berkovich analytification of $\P^r$. 

Theorem \ref{mainthm_limits} and Theorem \ref{mainthm_section} together provide us with a heuristic saying that $\calXbar_r(K)$ is in some sense a \emph{universal (realizable) tropical linear space}. This goes hand in hand with the work of Dress and Terhalle \cite{DressTerhalle_treeoflife}, in which the authors realize the building $\calB_r(K)$ as the tight span of a suitable infinite valuated matroid, see Section \ref{section_tightspan} below.

In Section \ref{section_infiniteGrassmannian} we build on this observation and construct a tropical linear space for any infinite valuated matroid (expanding on the finite case, see \cite{Speyer, SpeyerSturmfels} as well as \cite[Chapter 4]{MaclaganSturmfels}). 
This allows us to make precise the idea that the Goldman-Iwahori space is the tropical linear space of the \emph{universal realizable valuated matroid} $w_{\univ}$ that is given by the map 
\begin{equation*}
w_{\univ}\colon{(K^{r+1})^{\ast} \choose r+1}\longrightarrow \Rbar
\end{equation*}
induced by the permutation-invariant map $\val\circ\det\colon K^{(r+1)\times (r+1)}\rightarrow \Rbar$.

\begin{maintheorem}\label{maintheorem_infinite_tropicalization}
Let $w_{\univ}$ be the universal realizable valuated matroid. Then
$$\calXbar_r(K)=\Trop(w_{\univ}).$$
\end{maintheorem}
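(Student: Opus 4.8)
My plan is to construct an explicit homeomorphism, directly translating the defining axioms of a (non-Archimedean) seminorm into the combinatorics of $w_{\univ}$, rather than going through the limit description. Write $E := (K^{r+1})^{\ast}$ for the ground set and normalise the absolute value on $K$ as $|\cdot| = e^{-\val}$. To a non-trivial seminorm $\|\cdot\|$ on $E$ I associate the function $x\colon E \to \Rbar$, $x_v := -\log\|v\|$; a homothety of $\|\cdot\|$ shifts all finite coordinates of $x$ by a constant, and non-triviality means $x \not\equiv +\infty$, so this descends to a map $\Phi\colon \calXbar_r(K)\to \T\P^{E}$. Conversely a function $x$ determines the candidate seminorm $\|v\| := e^{-x_v}$ (with $\|0\| := 0$); these two assignments are mutually inverse and both continuous for the topologies of coordinatewise convergence, so it remains to check that under $\Phi$ the non-trivial seminorms correspond to exactly the $x$ lying in $\Trop(w_{\univ})$. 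Throughout I use the description of $\Trop(w_{\univ})$ as the set of $x$ tropically orthogonal to every valuated circuit of $w_{\univ}$ (see Section~\ref{section_infiniteGrassmannian}), together with the fact that, since $w_{\univ}$ is realised by the tautological configuration in which $E$ is its own family of vectors, its valuated circuits are exactly the vectors $\big(\val(a_v)\big)_{v\in C}$ attached to set-theoretic circuits $C=\{v_0,\dots,v_k\}\subseteq E$ with full-support linear dependence $\sum_i a_i v_i = 0$ (Section~\ref{section_valuatedmatroids}).

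For $\Phi(\calXbar_r(K))\subseteq\Trop(w_{\univ})$, fix a seminorm, put $x=\Phi(\|\cdot\|)$, and take such a circuit $C$. Then $\val(a_i)+x_{v_i}=-\log\|a_i v_i\|$, and for each index $j$ the identity $a_j v_j=-\sum_{i\ne j} a_i v_i$ together with the ultrametric inequality gives $\|a_j v_j\|\le\max_{i\ne j}\|a_i v_i\|$. Hence no single $\|a_i v_i\|$ is a strict maximum, so $\max_i\|a_i v_i\|$ — equivalently $\min_i(\val(a_i)+x_{v_i})$ — is attained at least twice, which is exactly the tropical orthogonality of $x$ with the valuated circuit $(\val(a_i))_i$.

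For the reverse inclusion, take $x\in\Trop(w_{\univ})$ and verify that $\|v\|=e^{-x_v}$ is a seminorm by feeding the right circuits into the orthogonality condition. For $\lambda\in K^{\ast}$, $\lambda\ne 1$, the two-element circuit $\{v,\lambda v\}$ carries the dependence $\lambda\cdot v-1\cdot(\lambda v)=0$, with valuated circuit $(\val(\lambda),0)$; orthogonality forces $x_{\lambda v}=\val(\lambda)+x_v$, i.e.\ $\|\lambda v\|=|\lambda|\,\|v\|$ (the cases $\lambda\in\{0,1\}$ being trivial). For linearly independent $v,w$ the three-element circuit $\{v,w,v+w\}$ carries $v+w-(v+w)=0$ with valuated circuit $(0,0,0)$; orthogonality forces $\min(x_v,x_w,x_{v+w})$ to be attained twice, so $x_{v+w}\ge\min(x_v,x_w)$, i.e.\ $\|v+w\|\le\max(\|v\|,\|w\|)$. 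When $w\in Kv$, say $w=\mu v$, homogeneity reduces $\|v+w\|\le\max(\|v\|,\|w\|)$ to the inequality $|1+\mu|\le\max(1,|\mu|)$ in $K$ itself, and $\|0\|=0$ holds by definition; thus $\|\cdot\|$ is a seminorm, non-trivial because $x\not\equiv+\infty$. This shows $\Phi$ is onto $\Trop(w_{\univ})$, and combined with continuity of $\Phi$ and of its inverse it is a homeomorphism.

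The argument is short once the right objects are in place, so the effort is in bookkeeping: making sure that the notion of $\Trop(w)$ adopted in Section~\ref{section_infiniteGrassmannian} is genuinely cut out by orthogonality against \emph{all} valuated circuits (automatic for finite ground sets, but requiring the finitary character of the circuit axioms in general), that the loop $0\in E$ — if $0$ is kept in the ground set — receives the coordinate $+\infty$ on both sides, and that the homothety quotient and the excised point $(\infty,\dots,\infty)$ of $\T\P^{E}$ correspond under $\Phi$. I expect this last, essentially formal, verification to be the only place requiring genuine care. As an alternative one could instead deduce the statement from Theorem~\ref{mainthm_limits}, by matching the finite coordinate-restrictions $w_{\univ}|_S$ ($S\subseteq E$ finite) with the objects of the category $I$ and identifying $\Trop(w_{\univ})$ with the cofiltered limit $\varprojlim_S\Trop(w_{\univ}|_S)$; the direct verification above, however, avoids re-deriving that limit description.
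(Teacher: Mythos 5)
Your approach is genuinely different from the paper's. The paper proves this theorem in two sentences by invoking Theorem~\ref{mainthm_limits} (the homeomorphism $\calXbar_r(K)\cong\varprojlim_\iota\Trop(\P^r,\iota)$), restricting to the subcategory of coordinate projections (Remark~\ref{rem_different_categories}(c)), and recognising the resulting cofiltered system of finite tropical linear spaces as exactly the system that computes $\calL(w_{\univ})$ in Definition~\ref{def:trop_v}. Your argument instead builds an explicit coordinatewise bijection and reads the seminorm axioms directly off the tropical Pl\"ucker/circuit relations of $w_{\univ}$. The translation \emph{ultrametric inequality} $\leftrightarrow$ \emph{tropical circuit orthogonality} is exactly the right mechanism, and the checks you do — parallel pairs $\{v,\lambda v\}$ giving homogeneity, triples $\{v,w,v+w\}$ giving the strong triangle inequality — are the correct ones. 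This direct route has the virtue of not depending on the heavy lifting of Theorem~\ref{mainthm_limits}.

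However, there is a genuine gap, and you underweight it when you file it under ``bookkeeping.'' The object the paper calls $\Trop(w_{\univ})=\calL(w_{\univ})$ is \emph{not} defined via valuated circuits; Definition~\ref{def:trop_v} cuts it out by the ``large circuit'' conditions over all $\tau\in\binom{E}{r+2}$. Your argument throughout uses the description by (tropicalised) small circuits of sizes $2$ and $3$, and in the forward direction the even weaker fact that seminorms respect orthogonality against \emph{every} linear-dependence circuit. To conclude $\calXbar_r(K)=\Trop(w_{\univ})$ with the paper's definition you still need the equivalence between the large-circuit conditions of Definition~\ref{def:trop_v} and the small-circuit conditions~\eqref{eqn:small_circuits_i}–\eqref{eqn:small_circuits_ii}. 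The paper does prove exactly this, but only \emph{after} Theorem~\ref{maintheorem_infinite_tropicalization}, in the final proposition of Section~\ref{section_infiniteGrassmannian}, framed as ``a curiosity''; it uses Cramer's rule (Lemma~\ref{lem:Cramer}) together with an induction on sums. For a finite ground set the large-circuit/valuated-circuit equivalence is folklore, but on the infinite ground set $E=(K^{r+1})^\ast\setminus\{0\}$ it is not automatic: a large circuit $\tau$ may fail to contain a basis, or the set of small circuits may be strictly weaker, and these cases have to be handled. So your proof as written invokes a statement the paper treats as a separate result, and you should either cite that proposition explicitly in place of step (4) or reproduce its argument. A smaller but real point: the paper topologises $\calL(w_{\univ})$ by the limit topology over finite restrictions, while $\calXbar_r(K)$ carries the quotient of the pointwise-convergence topology on seminorms. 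Your claim that $\Phi$ and $\Phi^{-1}$ are ``both continuous for the topologies of coordinatewise convergence'' is plausible but not free, because of the homothety quotient and the points with infinite coordinates; the paper gets this for free from the proof that the map in Theorem~\ref{mainthm_limits} is already a homeomorphism.
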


Our approach, in particular, provides us with a notion of tropicalization for a finite-dimensional linear space embedded into an infinite-dimensional space, see again Section \ref{section_infiniteGrassmannian} for details. Theorem \ref{maintheorem_infinite_tropicalization} is also, in spirit, very similar to the universal (possibly non-linear) tropicalization of \cite{GiansiracusaGiansiracusa}, which may be identified with Berkovich analytic space.

It is well-known that Bruhat--Tits buildings also admits a simplicial structure with can be described in terms  of filtrations by linear subspaces, when $K$ carries the trivial absolute value, and by sublattices, when $K$ carries a non-trivial discrete absolute value. 
We incorporate those perspectives into our story in Examples \ref{ex_trivialvaluationdescription} and \ref{B_1(K)trivial}, while, in Section \ref{section_trivvalmatroid=matroid}, we recall the analogous story how valuated matroids over a field with trivial absolute value are nothing but matroids. 
In Section \ref{section_examples} we  illustrate our main results using this alternative point of view on buildings.

\subsection*{Conventions} We write $\Rbar$ for $\R\cup\{\infty\}$ with tropical operations $\min$ as well as $+$, and $\T\P^n$ for $\Rbar^{n+1}\setminus\{(\infty,\ldots,\infty)\}/\sim$, where $\sim$ is the equivalence relation generated by translation by a real multiple of the vector $\mathbbm{1}=(1,\ldots,1)$. We write $[n]$ for the set $\{0,\ldots,n\}$ and ${E \choose r}$ for the set of subsets of cardinality $r$ of a given set $E$. By $K$ we denote a complete non-Archimedean field, with ring of integers $\OO_K$ and residue field $k$; in the discretely valued case, $\OO_K$ is Noetherian and its maximal ideal is generated by one element, called a uniformizer and denoted by $\pi$.

\subsection*{Acknowledgements}
This collaboration was initiated while we were preparing for and participating in a learning seminar, the GAUS-AG on buildings; we thank all other participants, in particular Jiaming Chen, Andreas Gross, Johannes Horn, Lucas Gerth, J\'er\^ome Poineau, Felix R\"ohrle, Pedro Souza, and Jakob Stix. Particular thanks are due to Annette Werner for answering our countless questions about buildings on several occasions. We also thank Jeffrey Giansiracusa, Marvin Hahn, Diane Maclagan, Anne Parreau, Bertrand R\'emy, Victoria Schleis, Petra Schwer, and Jonathan Wise for helpful conversations at various (pre-)stages of this project.

\subsection*{Funding} This project has received funding from the Deutsche Forschungsgemeinschaft  (DFG, German Research Foundation) TRR 326 \emph{Geometry and Arithmetic of Uniformized Structures}, project number 444845124, from the Deutsche Forschungsgemeinschaft (DFG, German Research Foundation) Sachbeihilfe \emph{From Riemann surfaces to tropical curves (and back again)}, project number 456557832, and from the LOEWE grant \emph{Uniformized Structures in Algebra and Geometry}.
L.B. has received funding from the ERC Advanced Grant SYZYGY of the European Research Council (ERC) under the European Union Horizon 2020 research and innovation program (grant agreement No. 834172).
A.V. has received funding from the Swiss National Science Foundation, grant number 200142.


\section{The affine building and its compactification}

Let $K$ be a complete non-Archimedean field. In this section, we recall some fundamentals about the space of  seminorms on a finite-dimensional vector space $V$ over $K$. The (compactification of the) affine building of $\PGL_n$ is the a priori proper subset of diagonalizable (semi)norms. If $K$ is spherically complete these two spaces will turn out to be the same.

Let $V$ be a vector space over $K$ of dimension $n$. 
\begin{definition}
A \emph{norm} on $V$ is a map $\vert\vert.\vert\vert\colon V\rightarrow \R$ that fulfills the following axioms:
\begin{enumerate}[(i)]
\item For all $v\in V$ we have $\vert\vert v\vert\vert \geq 0$ and $\vert\vert v\vert\vert=0$ if and only if $v=0$;
\item For all $v\in V$ and $\lambda\in K$ we have 
\begin{equation*}
\vert\vert \lambda \cdot v\vert\vert =\vert \lambda \vert \cdot \vert\vert v\vert\vert \ .
\end{equation*}
\item For all $v,w\in V$  the strong triangle inequality
\begin{equation*}
\vert\vert v+w\vert\vert \leq \max\big\{\vert\vert v\vert\vert,\vert\vert w\vert\vert\big\}
\end{equation*}
holds.
\end{enumerate}
If in (i) we only require $\vert\vert v\vert\vert\geq 0$ and allow vectors $v\in V-\{0\}$ with $\vert\vert v\vert\vert=0$ we say that $\vert\vert.\vert\vert$ is a \emph{seminorm}. A seminorm $\vert\vert.\vert\vert$ is said to be \emph{non-trivial} if there is a $v\in V$ such that $\vert\vert v\vert\vert\neq 0$.

\end{definition}

\begin{example}
Pick a basis $\mathbf{e}=(e_1,\ldots, e_n)$ of $V$ and $\vec{a}=(a_1,\ldots, a_n)\in \Rbar^n$. We may associate to this datum a seminorm $\vert\vert.\vert\vert_{\mathbf{e},\vec{a}}$ on $V$ given by associating to $v=\sum_{i=1}^n \lambda_i e_i$ the value $\max_{i=1,\ldots, n}\big\{\vert\lambda_i\vert e^{-a_i}\big\}$. The seminorm is non-trivial if and only if at least one $a_i\neq \infty$ and it is a norm if and only if all $a_i\neq \infty$. 
\end{example}

A seminorm of the form $\vert\vert.\vert\vert_{\mathbf{e},\vec{a}}$ for a basis $\mathbf{e}$ and coefficients $\vec{a}\in\Rbar^n$ is said to be \emph{diagonalizable}.

\begin{definition}
    A non-Archimedean field $K$ is said to be \emph{spherically complete} if any decreasing sequence of closed balls has non-empty intersection.
\end{definition}

A valued field extension $L/K$ is said to be \emph{immediate} if it has the same value group and the same residue field. A field is called \emph{maximally complete} if it does not admit any proper immediate extension. By a classical result of Kaplansky (see e.g.\ \cite{schilling_valuations}, II.6, Theorem 8), it turns out that this concept is equivalent to spherical completeness. We thus have the following:

\begin{theorem}\cite[Satz 24]{Krull}
    Every non-Archimedean field admits a (non-unique) immediate extension that is spherically complete.
\end{theorem}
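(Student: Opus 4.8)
Via the equivalence between maximal and spherical completeness recalled just above, it suffices to produce an immediate extension $\Omega$ of $K$ that admits no proper immediate extension; such an $\Omega$ is then maximally complete, hence spherically complete. The plan is to obtain $\Omega$ by Zorn's lemma. Order the immediate extensions of $K$ by declaring $L \le L'$ whenever $L'$ is an immediate extension of $L$. Since immediacy is transitive and any valued field between $K$ and an immediate extension of $K$ is again an immediate extension of $K$, this order coincides with ordering by $K$-embeddings of valued fields, and the union of a chain of immediate extensions of $K$ is again one, because passing to the value group and to the residue field commutes with filtered unions of valued fields. A maximal element $\Omega$ then has no proper immediate extension, as such an extension would, by transitivity, be an immediate extension of $K$ lying strictly above $\Omega$.

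The one genuine point --- and the step I expect to be the main obstacle --- is that the immediate extensions of $K$ a priori form a proper class, so before invoking Zorn one must bound their cardinality, and this is exactly where the theory of pseudo-Cauchy (pseudo-convergent) sequences is needed. First I would establish the key lemma: if $a$ lies in an immediate extension of $K$ but not in $K$, then the set $\{\,v(a-c) : c \in K\,\} \subseteq \Gamma_K$ has no maximum. Indeed, if the maximum were attained at $c_0$ with value $\gamma$, then, using that $\Gamma_{K(a)} = \Gamma_K$, one chooses $d \in K$ with $v(d) = \gamma$; using that the residue fields of $K(a)$ and $K$ coincide, one chooses $e \in K$ whose residue equals that of $(a - c_0)/d$; this forces $v\big(a - (c_0 + d e)\big) > \gamma$, a contradiction. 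Picking $c_\rho \in K$ with $v(a - c_\rho)$ strictly increasing and cofinal in $\{\,v(a-c) : c \in K\,\}$ then exhibits $a$ as a pseudo-limit of a pseudo-Cauchy sequence from $K$ having no pseudo-limit in $K$. Since such sequences may be taken of length at most $|\Gamma_K|$, there are boundedly many of them; combined with Kaplansky's analysis of pseudo-Cauchy sequences of algebraic versus transcendental type --- which controls how many new elements a single such sequence can force into an immediate extension --- this produces a cardinal $\kappa$, depending only on the cardinalities of $K$, of $\Gamma_K$, and of the residue field of $K$, that bounds the cardinality of every immediate extension of $K$.

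With $\kappa$ in hand, the rest is bookkeeping. Fix a set $S$ of cardinality strictly greater than $\kappa$ with $K \subseteq S$, and let $\calI$ be the set of all valued-field structures on subsets of $S$ that make their underlying subset into an immediate extension of $K$, partially ordered by inclusion of valued subfields; unions of chains are upper bounds, so Zorn's lemma yields a maximal $\Omega \in \calI$. If $\Omega$ had a proper immediate extension $\Omega'$, then $\Omega'/K$ would be immediate with $|\Omega'| \le \kappa < |S|$, so $\Omega'$ could be realized inside $S$ as a valued extension of $\Omega$, contradicting maximality. Hence $\Omega$ is an immediate extension of $K$ with no proper immediate extension, and is therefore spherically complete by the cited results. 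The parenthetical non-uniqueness needs no argument here; I would merely note that in residue characteristic zero maximal immediate extensions are in fact unique (a further theorem of Kaplansky), whereas in positive residue characteristic uniqueness can fail.
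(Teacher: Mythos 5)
The paper does not supply a proof of this statement; it is cited directly from Krull's \emph{Allgemeine Bewertungstheorie} (Satz~24), and the preceding paragraph of the paper only recalls (also by citation) the equivalence of spherical and maximal completeness. So there is no in-paper argument to compare against. Your proof is the standard modern one: reduce to maximal completeness, then run Zorn's lemma on immediate extensions after first establishing a cardinal bound. You correctly identify that the honest work lies in that cardinal bound, and both your key lemma (that $\{v(a-c) : c \in K\}$ has no maximum when $a$ lies in a proper immediate extension) and its proof are accurate. The only part that stays at the level of a sketch is precisely the step you flag as the obstacle: passing from ``every new element is a pseudo-limit of a pseudo-Cauchy sequence from $K$ without limit in $K$'' to an actual cardinal $\kappa$ bounding all immediate extensions. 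That step is not automatic --- distinct elements of an immediate extension can be pseudo-limits of the same sequence, so one genuinely needs Kaplansky's algebraic/transcendental dichotomy (or the explicit bound $|L| \le |K|^{|\Gamma_K|}$ proved via a separating map $L \to K^{\Gamma_K}$, as in Engler--Prestel) rather than a mere count of sequences. With that reference supplied, your argument is complete and matches the textbook treatment; Krull's original proof, written before Zorn's lemma was standard, achieves the same via well-ordering and transfinite recursion, but the substance is the same.
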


The following observation will be central for our result:

\begin{proposition}\label{prop_sphericallycomplete}
Let $K$ be a non-Archimedean field. Then $K$ is spherically complete if and only if every seminorm on a finite-dimensional $K$-vector space is diagonalizable.
\end{proposition}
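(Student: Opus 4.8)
\emph{Proof proposal.} This is an equivalence and I would prove the two implications separately; note first that for a diagonalizable seminorm one allows the coefficients $a_i=\infty$, so ``diagonalizable'' means: there is a basis $\mathbf e$ that is \emph{orthogonal}, i.e.\ $\lVert\sum\lambda_i e_i\rVert=\max_i|\lambda_i|\,\lVert e_i\rVert$. For the direction ``spherically complete $\Rightarrow$ every seminorm is diagonalizable'' I would first reduce to the case of a norm: if $\lVert\cdot\rVert$ is a seminorm on $V$ with null space $V_0=\{v:\lVert v\rVert=0\}$, it descends to a norm on $V/V_0$; an orthogonal basis of $V/V_0$, lifted arbitrarily to $V$ and completed by any basis of $V_0$ (whose vectors have norm $0$), is then orthogonal for $\lVert\cdot\rVert$. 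For norms I would induct on $\dim V$, the case $\dim V=1$ being trivial. The inductive step rests on the following claim: \emph{if $K$ is spherically complete, $W\subseteq V$ is a subspace and $v\in V\setminus W$, then $\inf_{w\in W}\lVert v-w\rVert$ is attained.} Granting it, pick a hyperplane $W\not\ni v$, use induction to get an orthogonal basis $f_1,\dots,f_{n-1}$ of $W$, let $w_0\in W$ attain $\inf_{w\in W}\lVert v-w\rVert$, and set $f_n:=v-w_0$, so that $\lVert f_n-w\rVert\ge\lVert f_n\rVert$ for every $w\in W$; a short case distinction using the strong triangle inequality (and that unequal values do not cancel) then shows $f_1,\dots,f_n$ is orthogonal, so $\lVert\cdot\rVert$ is diagonalizable.

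To prove the claim I would observe that for $s>0$ the set $\{w\in W:\lVert v-w\rVert\le s\}$, when non-empty, equals a closed ball $\{w\in W:\lVert w-w_0\rVert\le s\}$ of the restricted norm (immediate from the strong triangle inequality). Choosing $w_k\in W$ with $\lVert v-w_k\rVert\downarrow\delta:=\inf_{w\in W}\lVert v-w\rVert$ (all $>0$, as $W$ is closed) yields a decreasing sequence of such balls. Fixing an orthogonal basis of $W$ identifies $W$ with $K^{n-1}$ carrying a weighted sup-norm, in which every closed ball is a product of closed balls of $K$; a nested sequence of products of non-empty balls is nested coordinatewise, so spherical completeness of $K$ gives a point in the intersection, i.e.\ a $w^\ast\in W$ with $\lVert v-w^\ast\rVert\le\delta$. (Thus the claim is essentially spherical completeness in disguise.)

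For the converse I argue by contrapositive: if $K$ is not spherically complete, there is a non-diagonalizable norm on $K^2$. Start from a decreasing sequence of closed balls $B_i=\{x:|x-a_i|\le r_i\}$ with $\bigcap_iB_i=\varnothing$; after translating so that $a_1=0$ and passing to a subsequence one may assume $|a_i|=c>0$ is constant, $0<r_i<c$, and $r_i$ is strictly decreasing with limit $r<c$ (equal radii would force consecutive balls to coincide and the intersection to be non-empty). Define
\begin{equation*}
\lVert(\lambda,\mu)\rVert:=\inf_{i}\ \max\bigl\{\,r_i\,|\lambda|,\ |\mu-a_i\lambda|\,\bigr\}.
\end{equation*}
Nestedness makes the $i$-th term non-increasing, so the infimum is a limit; homogeneity and the strong triangle inequality follow from $\lim\max=\max\lim$, and definiteness is where $\bigcap_iB_i=\varnothing$ enters (if $\lVert(\lambda,\mu)\rVert=0$ with $\lambda\neq0$ then $r_i\to0$ and $|\mu/\lambda-a_i|\to0$, forcing $\mu/\lambda\in\bigcap_iB_i$). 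I then compute $d:=\inf_{\mu\in K}\lVert(\mu,1)\rVert$: splitting on $|\mu|$ versus $1/c$ shows $d=r/c$, approached along $\mu=1/a_j$; but $d$ is \emph{not} attained, because a minimizer $\mu$ must have $|\mu|=1/c$ and, by monotonicity of the terms, $\limsup_i|1-a_i\mu|\le r/c$, i.e.\ $\limsup_i|\xi-a_i|\le r$ for $\xi:=1/\mu$, which (using $r_j>r$ for all $j$) again forces $\xi\in\bigcap_iB_i=\varnothing$. On the other hand, for a diagonalizable norm on a two-dimensional space the distance from any vector to any line \emph{is} attained --- an elementary ultrametric computation in an orthogonal basis, using that the two coordinates of $v-tu$ obey a fixed affine relation --- so $\lVert\cdot\rVert$ is not diagonalizable.

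The main obstacle I expect is the converse: producing a workable non-diagonalizable norm and, above all, verifying that $d$ is genuinely not attained, since both this and the definiteness of $\lVert\cdot\rVert$ amount to translating ``$\bigcap_iB_i=\varnothing$'' into the non-existence of the vector one would need. The forward direction is routine once the best-approximation claim is available, and that claim reduces cleanly to spherical completeness applied coordinatewise.
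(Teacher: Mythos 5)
Your proof is correct, and it shares the paper's seminorm-to-norm reduction (pass to the quotient by the kernel and lift an orthogonal basis arbitrarily), but it then diverges: where the paper disposes of the norm case by citing \cite[Lemma 1.12]{boucksom_spaces_of_norms}, you reprove that equivalence from scratch. Your forward direction is the classical inductive construction of an orthogonal basis for a norm: the key lemma is that the distance from a vector $v$ to a proper subspace $W$ is attained, and you reduce this to spherical completeness of $K$ applied coordinatewise, using an orthogonal basis of $W$ supplied by the inductive hypothesis to identify $W$ with $K^{\dim W}$ carrying a weighted sup-norm (in which closed balls are products of balls of $K$, and a nested chain of non-empty products is nested factorwise). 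The short case split showing $(f_1,\dots,f_{n-1},v-w_0)$ is orthogonal is fine. Your converse is also sound: after normalizing a bad nested chain $B_i=\{|x-a_i|\le r_i\}$ with empty intersection so that $|a_i|=c>0$ is constant and $r_i\downarrow r<c$, the function $\lVert(\lambda,\mu)\rVert=\inf_i\max\{r_i|\lambda|,|\mu-a_i\lambda|\}$ is a norm (the $i$-th term is non-increasing by nestedness, so the infimum is a limit; definiteness uses $\bigcap B_i=\varnothing$), and $\inf_\mu\lVert(\mu,1)\rVert=r/c$ is approached along $\mu=1/a_j$ but not attained, since a minimizer would produce a point of $\bigcap B_i$. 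Since for a diagonalizable norm the distance from any vector to any line is attained (elementary ultrametric computation in an orthogonal basis), this norm is not diagonalizable. In short, your argument is essentially a self-contained proof of the cited lemma; the paper's version buys brevity by outsourcing it, while yours makes the whole equivalence internal to the paper at the cost of roughly a page.
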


\begin{proof}
 By \cite[Lemma 1.12]{boucksom_spaces_of_norms} the non-Archimedean field $K$ is spherically complete if and only if all norms on a finite-dimensional vector space over $K$ are diagonalizable. Our statement follows from the fact that a seminorm is diagonalizable if and only if the induced norm on the quotient modulo the kernel is diagonalizable.
\end{proof}

\begin{example}\label{ex_spherically_complete}
\begin{enumerate}[(i)]
    \item Every trivially valued field is obviously maximally (hence spherically) complete.
    \item Every discretely valued complete field is spherically complete, as by \cite[Proposition 3.1]{RemyThuillierWernerII} every norm on a finite-dimensional vector space is diagonalizable.
    \item In particular, for any prime number $p$ the field of $p$-adic numbers $\Q_p$ is local and spherically complete.
    \item The algebraic closure $\Q_p^a$ of the $p$-adic numbers is not complete (see \cite[Section 3.1.4]{robert_padic}).
    \item The completion $\C_p$ of $\Q^a_p$ is still algebraically closed, but not spherically complete. 
     \item Over $\C$, the field of rational functions $\C(t)$ is not Cauchy complete, as the Cauchy sequence $\left(\sum_{i=0}^n \frac{1}{i!}t^i\right)_{n\in\mathbb{N}}$ has no limit. Its completion is the field of (formal) Laurent series $\C(\!(t)\!)$, which is spherically complete, because the valuation is discrete.
    \item For any field $k$ let $k\{\!\{t\}\!\} =\bigcup_{n \geq 1} k(\!(t^{1/n})\!)$ be the field of Puiseux series. It is not complete. Its completion is the Levi-Civita field \cite[Theorem 4.10]{barria_summary_non_Archimedean}, in which the exponents of a series need not have a common denominator, but for any upper bound, there are only finitely many exponents with non-zero coefficients. 
    The Levi-Civita field is not spherically complete, its spherical completion is the Malcev-Neumann field $k(\!(t^{\Q})\!)$ of power series with rational exponents and well-ordered support (see \cite[Example 1.1]{boucksom_spaces_of_norms}).
\end{enumerate}
\end{example}

Let $\calN(V)$ and $\calNbar(V)$ be the set of norms and respectively non-trivial seminorms on the dual vector space $V^{\ast}$ (note the dualization!).  
For $x\in \calNbar(V)$ we denote by $\vert\vert.\vert\vert_x$ the associated seminorm. We endow $\calNbar(V)$ with the coarsest topology that makes the natural evaluation maps
\begin{equation*}
\calNbar(V) \ni x\longmapsto \seminorm{v}_x\in\R
\end{equation*}
for all $v\in V^{\ast}$ continuous.
The space $\calN(V)$ has been first introduced by Goldman and Iwahori in \cite{GoldmanIwahori} over $K=\Q_p$.
\begin{remark}[Topology of $\calNbar(V)$]
\begin{enumerate}[(i)]
\item Equivalently, one may define the topology on $\calNbar(V)$ as the topology of pointwise convergence: a net $(x_\alpha)$ in $\calNbar(V)$ converges to a seminorm $x \in \calNbar(V)$ if and only if $\seminorm{v}_{x_\alpha}$ converges to $\seminorm{v}_x$ in $\R$ for all $v \in V^\ast$.
\item A basis of the topology on $\calNbar(V)$ is given by open subsets of the form
\begin{align*}
    U &= \big\{x \in \calNbar(V)\ \suchthat \seminorm{v_i}_x\in (a_i,b_i) \textrm{ for all } i=1,\dots,l \big\} \\
    &= ev_{v_1}^{-1}((a_1,b_1)) \cap \dots \cap ev_{v_l}^{-1}((a_l,b_l))
\end{align*}
for some $v_1,\ldots,v_l \in V^{\ast}$ and open intervals $(a_1,b_1), \ldots, (a_l,b_l) \subseteq \R$, where $ev$ denotes the natural evaluation map.

\end{enumerate}
\end{remark}
Two points $x,y\in\calNbar(V)$ are said to be \emph{homothetic}, written as $x\sim y$, if there is a constant $c>0$ such $\vert\vert.\vert\vert_x=c\cdot \vert\vert.\vert\vert_y$. Homothety defines an equivalence relation on $\calNbar(V)$ that restricts to an equivalence relation on $\calN(V)$. 

We denote by $\calN^{\diag}(V)\subseteq \calN(V)$ the subspace of diagonalizable norms and $\calNbar^{\diag}(V)\subset\calNbar(V)$ the subspace of diagonalizable seminorms. Note that seminorms that are homothetic to a diagonalizable one are themselves diagonalizable. Let $\calX(V)=\calN(V)/_\sim$.

\begin{definition}
The \emph{affine Bruhat--Tits building} of $\PGL(V)$ is defined to be the quotient space of diagonalizable norms by homothety:
\begin{equation*}
\calB(V)=\calN^{\diag}(V)/_\sim .
\end{equation*}
\end{definition}

 The quotient $\calBbar(V)=\calNbar^{\diag}(V)/_\sim \subseteq\calXbar(V)=\calNbar(V)/_\sim$ forms a natural bordification of $\calB(V)$ (in fact, by  Corollary \ref{cor_seminormcompact} below $\calXbar(V)$ is compact). In particular, for a spherically complete field $K$ the quotient $\calBbar(V) = \calXbar(V)$ is a natural compactification of $\calB(V)$. This expands on the construction in \cite{Werner_seminorms}.

When $V=K^{n+1}$, we write $\calX_n(K)$ and $\calXbar_n(K)$ for $\calX(K^{n+1})$ and $\calXbar(K^{n+1})$ respectively, similarly $\calB_n(K)$ and $\calBbar_n(K)$.

\begin{remark}(Topological structure)
\begin{enumerate}[(i)]
\item By \cite[Theorem 1.19]{boucksom_spaces_of_norms} the locus $\calN^{\diag}(V)$ of diagonalizable seminorms is dense in $\calN(V)$ with respect to the Goldman-Iwahori metric 
$$d\big(\seminorm{\cdot}_x,\seminorm{\cdot}_y\big)=\sup_{v \in V^{\ast}}\big(\log \seminorm{v}_x-\log \seminorm{v}_y\big).$$
The finiteness of the supremum follows from the fact that any two norms are equivalent (see \cite[Page 10]{boucksom_spaces_of_norms}). The  topology induced by this metric is finer than the one defined above since uniform convergence implies pointwise convergence. It follows that $\calB(V)$ is dense in $\calX(V)$. Hence also $\calBbar(V)$ is dense in $\calXbar(V)$. Indeed, a stratum of $\calBbar(V)$ of seminorms with a fixed kernel $W \subset V$ can be identified with $\calB(V/W)$. 

\item Despite the notation suggesting that $\calN(V)\subset \calNbar(V)$ is open, this need not be true in general, as we will show in Remark \ref{notopen}. On the other hand, it is when $K$ is local. Indeed, let $\{x_\alpha\}_{\alpha}$ be a net of seminorms, converging to $\bar x$, and let $\{v_\alpha\}_{ \alpha}$ be a net of unit vectors (with respect to some fixed norm on $V$) such that $\lvert\lvert v_\alpha \rvert\rvert_{x_\alpha}=0$. Since the field is local, the unit sphere is compact and so we can find a convergent subnet converging to $\bar v\neq 0$. We conclude that $\lvert\lvert \bar v\rvert\rvert_{\bar x}=0$ by continuity and thus  $\bar x$ is also a proper seminorm.
\end{enumerate}
\end{remark}

\begin{example}\label{ex_trivialvaluationdescription}
Let $K$ have trivial valuation. Then we can give an explicit description of the space of seminorms up to homothety on a vector space $V$ of dimension $r+1$. First recall that by Example \ref{ex_spherically_complete} Item~(i) the field $K$ is spherically complete and hence by Proposition \ref{prop_sphericallycomplete} we have $\calXbar(V)=\calBbar(V)$. We claim that there is a bijection:
$$\calBbar(V)\stackrel{1:1}{\longleftrightarrow} \big\{\big(0=V_0 \subsetneq  V_1 \subsetneq  \dots \subsetneq  V_l=V^{\ast},c_1>\dots>c_{l-1}\big)\ \big\vert \   c_1,\dots,c_{l-1} \in \Rbar_{> 0} \big\}_{l=1,\ldots,r+1},$$
where $0=V_0 \subsetneq  V_1 \subsetneq  \dots \subsetneq  V_l=V^{\ast}$ is a flag of subspaces of $V^{\ast}$. We allow the special case of $l=1$ where we just have the flag $(0\subsetneq V^{\ast})$ without any coordinates. 
\begin{proof}
Let $\seminorm{\cdot}\in \calNbar(V)$ be a representative of $x \in \calBbar(V)$, i.e.~a non-trivial seminorm on $V^\ast$. Then all balls around $0$ in $V^{\ast}$ are subspaces. By letting the radius vary, we obtain a unique filtration $0=V_0 \subsetneq  V_1 \subsetneq  \dots \subsetneq  V_l=V^{\ast}$ such that for all $i=1,\dots,l$ the restriction of $\seminorm{\cdot}$ to $V_i \setminus V_{i-1}$ is a constant function and the values are strictly increasing.

Let $d_i$ denote the constant value on $V_i \setminus V_{i-1}$ and set for $i=1,\dots,l-1$
$$c_i:= -\log \frac{d_i}{d_{l}}.$$
Note that any seminorm homothetic to $||.||$ yields the same filtration of subspaces and only multiplies all the $d_i$ simultaneously by a common scalar. This does not change the $c_i$ and thus we obtain a well-defined map.

Vice versa, every flag of subspaces $0=V_0 \subsetneq  V_1 \subsetneq  \dots \subsetneq  V_l=V^{\ast}$ together with coordinates $c_1>\dots>c_{l-1}\in \Rbar_{> 0}$ gives rise to coordinates $d_i$ via the formula above by setting $d_{l}=1$, and thus we obtain a well-defined diagonalizable seminorm with 
$$||.||\big\vert_{V_i \setminus V_{i-1}}=d_i$$
and generic value $1$. 
By construction, these maps are inverses of each other.
\end{proof}
\end{example}

\begin{remark}
The bijection in Example \ref{ex_trivialvaluationdescription}  restricts to a bijection
$$\calB(V)\stackrel{1:1}{\longleftrightarrow} \big\{\big(0=V_0 \subsetneq  V_1 \subsetneq  \dots \subsetneq  V_l=V^{\ast},c_1>\dots>c_{l-1}\big)\ \big\vert\   c_1,\dots,c_l \in \R_{> 0} \big\}.$$
This can be identified with the cone over the order complex of the lattice of nontrivial subspaces of~$V^{\ast}$.
The latter comes with a natural weak topology, where a set is open if and only if its intersection with each cone is relatively open in that cone. This turns this set into the colimit of all cones corresponding to a fixed filtration.
However, the topology on the building is much coarser than the weak topology of the cone complex, as the following example will show.
\end{remark}

\begin{example}\label{B_1(K)trivial}
We consider the case where $K$ is any infinite field with trivial valuation. Then we can identify $\calBbar_1(K)$ with the set:
$$\big\{(0\subsetneq V_1\subsetneq (K^2)^{\ast},c) \ \big\vert\ c \in \Rbar_{\geq 0} \big\}\cup \big\{(0\subsetneq (K^2)^{\ast})\big\}.$$
Here $\eta:=(0\subsetneq (K^2)^{\ast})$ corresponds to the homothety class of a norm that is constant away from $0$.
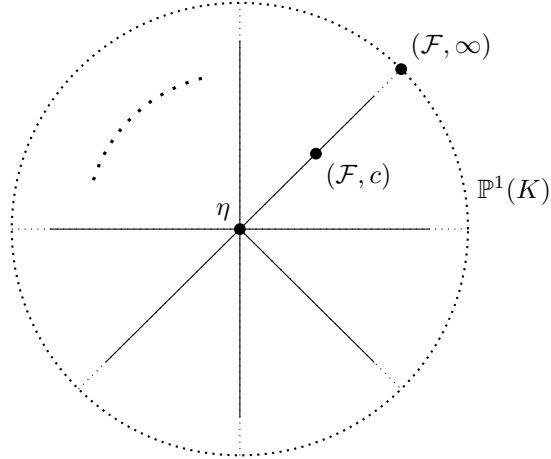
\begin{figure}[h]
\begin{tikzpicture}

\draw[black, dotted, thick](0,0) circle (3) {};

\draw[] (-2.5,0) -- (2.5,0);
\draw[dotted] (-3,0) -- (3,0);
\draw[] (0,-2.5) -- (0,2.5);
\draw[dotted] (0,-3) -- (0,3);
\draw[] ({-sqrt(6.25/2)},{-sqrt(6.25/2)}) -- ({sqrt(6.25/2)},{sqrt(6.25/2)});
\draw[dotted] ({-sqrt(9/2)},{-sqrt(9/2)}) -- ({sqrt(9/2)},{sqrt(9/2)});
\draw[] (0,0) -- ({sqrt(6.25/2)},{-sqrt(6.25/2)});
\draw[dotted] (0,0) -- ({sqrt(9/2)},{-sqrt(9/2)});

\draw[very thick, loosely dotted] (-0.5,2) arc (atan(1/4)+90:180-atan(1/4):2) ;

\draw[] (3,0.2) node[anchor=south west]{$\P^1(K)$};
\filldraw[black] (0,0) circle (2pt) node[anchor=south east]{$\eta$};
\filldraw[black] ({sqrt(9/2)},{sqrt(9/2)}) circle (2pt) node[anchor=south west]{$(\mathcal{F},\infty )$};
\filldraw[black] (1,1) circle (2pt) node[anchor=north west]{$(\mathcal{F},c )$};

\end{tikzpicture}
\caption{The building $\calBbar_1(K)$ of a trivially valued field, where $\mathcal{F}$ is shorthand for ``$0\subsetneq V_1\subsetneq (K^2)^{\ast}$''. The coordinate $c$ is a positive real number. A norm in the homothety class corresponding to $(\mathcal{F},c)$ has generic value $1$, and value $e^{-c}$ on $V_1\setminus\{0\}$. In the case of $c=\infty$, we have a proper seminorm with kernel $V_1$.}
\label{fig:example_trivial_valuation}
\end{figure}
Each cone corresponds to a one-dimensional subspace and the point at infinity of each cone corresponds to the homothety class of a proper seminorm. In rank one, the homothety class of a proper seminorm is uniquely determined by its kernel, and thus we can identify $\calBbar_1(K)\setminus \calB_1(K)$ with $\P^1(K)$.

A basis of the topology of $\calBbar_1(K)$ is given as follows: choose finitely many vectors $v_1,\dots,v_l\in (K^2)^{\ast}$ and intervals $(a_1,b_1),\dots,(a_l,b_l) \subset \R$ and define
\[U=\aset{x\in \calBbar_1(K)\ \suchthat \exists \textrm{ a representative } ||.||\in x \textrm{ with }||v_i||\in (a_i,b_i) \textrm{ for all } i=1,\dots,l }.\]
In particular, if such a set is a neighbourhood of $\eta$, then it contains all cones corresponding to subspaces which do not contain any vector $v_1,\dots,v_l$, i.e.~all but finitely many. This is of course not necessarily true for open neighborhoods of $\eta$ in the weak topology of the cone complex.
\end{example}

The space $\calB(V)$ has the structure of an affine building in the sense of \cite[Definition 1.9]{RemyThuillierWerner_survey} (see e.g.~\cite[III.1.2]{Parreau_thesis} for a proof). We refer the reader also to \cite{BennettSchwer} for a discussion of the various axiom systems for affine buildings over possibly non-discretely valued fields. For us, however, the most important notion is the following:

\begin{definition}
An \emph{apartment} $\calA(\mathbf{e})$ in $\calB(V)$ (associated to a basis $\mathbf{e}$ of $V^{*}$) is given by the image in $\calB(V)$ of 
\begin{equation*}\begin{split}
\R^{n+1}&\hooklongrightarrow \calN^{\diag}(V) \\
\vec{a}&\longmapsto \vert\vert.\vert\vert_{\mathbf{e},\vec{a}}  \ .
\end{split}\end{equation*}   
\end{definition}
Every apartment is a closed subset homeomorphic to $\R^{n+1}/\R\simeq \R^{n}$. The above parametrization can be extended to $\Rbar^{n+1}-\big\{(\infty,\ldots,\infty)\big\}$. Its image, denoted by $\calAbar(e)$, is the closure of $\calA(e)$ and may be naturally identified with $\T\P^n=\Rbar^{n+1}-\{(\infty ,\dots,\infty)\}/\ \R \cdot \mathbbm{1}$. We refer to $\calAbar(e)$ as a \emph{compactified apartment}.

\begin{example}
In Example \ref{B_1(K)trivial}, every apartment is the union of exactly two one-dimensional cones: a (homothety class of a) seminorm is diagonalizable by a basis 
$\{v_1,v_2\}\subset (K^2)^{\ast}$ if and only if it lies in the cone corresponding to $\langle v_1 \rangle$ or $\langle v_2 \rangle$.  
\end{example}

Our conventions are chosen so that the associations $V\mapsto \calXbar(V)$ and $V\mapsto \calBbar(V)$ are covariant functors from finite-dimensional $K$-vector spaces to topological spaces. In particular, any embedding $\iota: V \hookrightarrow W$ induces an embedding $\calBbar(\iota): \calBbar(V)\hookrightarrow \calBbar(W)$.
We have a natural operation of $\PGL(V)$ on $\calXbar(V)$, which respects diagonalizability and non-degeneracy. It is easy to show, that for $\dim V>1$ this operation of $\PGL(V)$ on $\calB(V)$ is transitive if and only if the valuation on $K$ is surjective.

\begin{example}
In the case of a discretely valued field $K$, the affine building $\calB_r(K)$ is a flag simplicial complex whose vertices correspond to equivalence classes of lattices (see Section \ref{sec:lattices}). 
Let $\OO_K$ denote the valuation ring, $\pi$ a uniformiser, and $k=\OO_K/(\pi)$ the residue field. A \emph{lattice} in $K^{r+1}$ is a free $\OO_K$-submodule of rank $r+1$. Two lattices $L_1, L_2$ are \emph{homothetic} if $L_1 = c L_2$ for some $c \in K^\ast$. Two homothety equivalence classes $\Lambda_1, \Lambda_2$ of lattices are \emph{adjacent} if there are representatives $L_1$ and $L_2$ such that $\pi L_1 \subset L_2 \subset L_1$. The simplices of $\calB_r(K)$ correspond to flags of adjacent lattices $\pi L_1\subset L_k\subset\ldots\subset L_1$. A lattice $L$ corresponds to an integral norm $|.|_L$ given by 
\begin{equation*}|x|_L := \inf\big\{|\lambda|\ \big\vert\ \lambda \in K^\ast, \lambda^{-1} x \in L\big\}
\end{equation*}
for $x\in K^{r+1}$, and one recovers the lattice as the closed unit ball of the norm. Given a lattice $L$, the star of $[L]$ in $\calB_r(K)$ can be identified with the spherical building $\calB_r(k)$ (see Example \ref{ex_trivialvaluationdescription}) by sending a flag of lattices $\pi L\subset L_k\subset\ldots\subset L$ to its image in $L/\pi L$, which is a flag of $k$-linear subspaces.
\end{example}

\begin{example}
Let $r=1$ and $K=\Q_p$. Then the affine Bruhat-Tits building $\calB_1(\Q_p)$ is an infinite tree whose vertices have valency $p + 1$. Let $\mathbf{e} = (e_1,e_2)$ be a basis of $(\Q_p^2)^\ast$. The apartment $\mathcal{A}(\mathbf{e})$ is an infinite path in the tree which uses all $\Z_p$-lattices with basis $(p^{u_1}e_1,p^{u_2}e_2)$ where $(u_1,u_2) \in \Z^2$. See Figure \ref{fig:examplePGL_2} for the compactified Bruhat-Tits tree of $\Q_2$. The open part is the usual trivalent infinite tree $\calB_1(\Q_2)$. The boundary $\calBbar_1(\Q_2) \setminus \calB_1(\Q_2)$ (illustrated by the circle) can be identified with $\P^1(\Q_2)$ since any homothety class of a non-trivial proper seminorm on $(\Q_2^2)^\ast$ can be identified with its kernel, which is a $1$-dimensional subspace of $(\Q_2^2)^{\ast}$.
    \begin{figure}
        
    \begin{tikzpicture}[
  grow cyclic,
  level distance=1cm,
  level/.style={
    level distance/.expanded=\ifnum#1>1 \tikzleveldistance/1.5\else\tikzleveldistance\fi},
  level 1/.style={sibling angle=120},
  level 2/.style={sibling angle=90},
  level 3/.style={sibling angle=90},
  level 4/.style={sibling angle=45},
  nodes={circle, fill = black,draw,inner sep=+0pt, minimum size=2pt},
  ]
\path[rotate=90]
  node (a) {}
  child foreach \cntI in {1,...,3} {
    node {}
    child foreach \cntII in {1,...,2} { 
      node {}
      child foreach \cntIII in {1,...,2} {
        node {}
        child foreach \cntIV in {1,...,2} {
          node {}
          child foreach \cntV in {1,...,2} {}
        }
      }
    }
  };
\draw[] (2.8,1) node[fill = none, draw = none, anchor=south west]{$\P^1(\Q_2)$};
\draw[black, dotted, thick](0,0) circle (2.8) {};

\end{tikzpicture}
\caption{The affine Bruhat-Tits building $\calBbar_1(\Q_2)$}
        \label{fig:examplePGL_2}
    \end{figure}
\end{example}


\section{Analytification and tropicalization}\label{section_analytification&tropicalization}

For a complete non-Archimedean field $K$ we recall the Berkovich analytification for any locally finite type scheme over $K$. We describe the relations between the Berkovich space, the space of seminorms on $(K^{r+1})^\ast$, and the compactified building. This allows us to define a tropicalization map from the space of seminorms on $(K^{r+1})^\ast$ to  tropical projective space.

Let $A$ be a finitely generated $K$-algebra and write $U=\Spec A$. 
\begin{definition}
A \emph{(multiplicative) seminorm} on $A$ is a map $\vert.\vert\colon A\rightarrow \R$ such that 
\begin{enumerate}[(i)]
\item $\vert f\vert \geq 0$ for all $f\in A$ as well as $\vert a\vert =\vert a\vert_K$ for all $a\in K$;
\item $\vert f\cdot g\vert =\vert f\vert \cdot \vert g\vert$ for all $f,g\in A$; and 
\item $\vert f + g \vert \leq \max\big\{\vert f\vert , \vert g\vert\big\}$ for all $f,g\in A$. 
\end{enumerate}    
\end{definition}
We think of the set $U^{\an}$ of multiplicative seminorms on $A$ as a space, the \emph{analytification} of $U$ in the sense of Berkovich \cite{Berkovich_book}; we write $x\in U^{\an}$ for a point in $U^{\an}$ as well as $\vert.\vert_x$ for the associated seminorm. The (analytic) topology of $U^{\an}$ is the coarsest that makes all evaluation maps 
\begin{equation*}\begin{split}
U^{\an}&\longrightarrow \R\\
x &\longmapsto \vert f\vert_x
\end{split}\end{equation*}
for $f\in A$ continuous. For a scheme $X$ that is locally of finite type over $K$, we define its \emph{analytification} $X^{\an}$ locally as above, and globally by gluing over affine open covers. See \cite[Chapter 3]{Berkovich_book} for details. 

\begin{remark}\label{kpointsdense}
    For an affine scheme $U$ of finite type over $K$, we have a natural inclusion $U(K)\to U^{\an}$ via $x\mapsto \left[f \mapsto |f(x)|\right]$. For any scheme $X$, locally of finite type over $K$, these inclusions on affine open subsets glue to an inclusion $X(K)\to X^{\an}$.  If the valuation on $K$ is non-trivial, the image of $X(\overline{K})$ is dense in $X^{\an}$ for an algebraic closure $\overline{K}$ of $K$.

    The association $X \mapsto X^{\an}$ is a covariant functor that commutes with the inclusion of the $K$-points of a scheme into its analytification. See \cite[Section 2.6]{Gubler_guide} for details.
\end{remark}

\begin{remark}
 To distinguish between multiplicative seminorms on a $K$-algebra and seminorms on a finite-dimensional $K$-vector space, we denote the former by $\vert.\vert_x$ and the latter by $\vert\vert.\vert\vert_x$.
 \end{remark}

Let $V$ be a vector space over $K$ of dimension $r+1$.
\begin{remark}[Analytification of projective spaces]
As explained in \cite[Section 2.1.1 ]{RemyThuillierWerner_survey}, the analytification of the projective space $\P(V)$ can be identified with the quotient of $\mathbb{A}(V)^{\an}-\{0\}$ modulo homothety.

Let $S^{\bullet}V^\ast$ be the symmetric algebra of the dual vector space $V^\ast$. This is a finitely generated graded $K$-algebra. Every choice of a basis $(e_0,\ldots,e_r)$ of $V^\ast$ induces an isomorphism of $S^{\bullet}V^\ast$ with the polynomial ring over $K$ in $r+1$ indeterminates. The affine space $\mathbb{A}(V)$ and projective space $\P(V)$ are defined as
\begin{equation*}
    \mathbb{A}(V) = \Spec (S^{\bullet}V^\ast) \qquad \text{ and } \qquad \P(V) = \Proj (S^{\bullet}V^\ast).
\end{equation*}
As said above, $\mathbb{A}(V)^{\an}$ consists of all multiplicative seminorms on $S^{\bullet}V^\ast$. Then the analytification $\P(V)^{\an}$ is the quotient of $\mathbb{A}(V)^{\an}-\{0\}$ by homothety: we define $x \sim y$ if and only if there exists a constant $c > 0$ such that for all $f \in S^{n}V^{\ast}
$ we have $\vert f \vert_x = c^{n} \vert f \vert_y $. 
Here the analytic topology on $\P(V)^{\an}$ is equal to the quotient topology.
\end{remark}

A multiplicative seminorm on $S^{\bullet}V^\ast$  induces a seminorm on $V^\ast = S^{1}V^\ast$ by restriction, hence we have a natural continuous map $\tau: \mathbb{A}(V)^{\an} \longrightarrow
\calNbar(V)$ such that $\tau(x) = 0$ if and only if $x=0$. Since this map is compatible with the equivalence relations, it descends to a continuous \emph{restriction map} 
\begin{equation*}\tau: \P(V)^{\an} \longrightarrow \calXbar(V)\ .
\end{equation*}

\begin{proposition}
Let $K$ be spherically complete. Then the  restriction map admits a section $$J:\calXbar(V)=\calBbar(V)\to \P(V)^{\an}.$$
Given a diagonalizable seminorm $\vert\vert.\vert\vert$, we may choose a basis $e_0,\dots, e_r$ of $V^{\ast}$ and $c_0,\dots, c_r \in \R_{\geq 0}$ such that 
$$\Big|\Big|\sum \lambda_i e_i\Big|\Big| = \max_i\{c_i|\lambda_i|\}.$$
The multiplicative seminorm $J(||.||)$ on $S^{\bullet}V^{\ast}$ is defined by
$$J\big(||.||\big)\bigg( \sum_{I=(i_0,\dots,i_r)} a_I e_0^{i_0}\dots e_r^{i_r} \bigg) = \max_I\big\{|a_I|c_0^{i_0}\ldots c_r^{i_r} \big\}.$$
When $K$ is a local field, the section $J$ is continuous. 
\end{proposition}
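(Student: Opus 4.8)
The plan is to verify, in this order: that the displayed formula defines an honest multiplicative seminorm on $S^{\bullet}V^{\ast}$, so that $J$ is well defined; that $\tau\circ J=\id$; and, when $K$ is local, that $J$ is continuous. \emph{Well-definedness.} Fix a diagonalizing basis $e_0,\dots,e_r$ and weights $c_0,\dots,c_r\in\R_{\geq0}$ as in the statement. If all $c_i>0$, the formula $\sum_I a_Ie_0^{i_0}\cdots e_r^{i_r}\mapsto\max_I\{|a_I|c_0^{i_0}\cdots c_r^{i_r}\}$ is the weighted Gauss seminorm — equivalently the monomial valuation with $|e_i|=c_i$, or the Gauss point of the closed polydisc $\{|e_i|\le c_i\}\subseteq\mathbb{A}(V)^{\an}$ — which is classically a multiplicative seminorm (leading-term argument: in each factor pick the monomial of minimal weighted value that is maximal for a fixed term order refining the weight, and observe that no cancellation occurs in the product). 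If $c_i=0$ exactly for $i>s$, the formula factors through $S^{\bullet}V^{\ast}\twoheadrightarrow S^{\bullet}(V^{\ast}/W^{\ast})$ with $W^{\ast}=\langle e_{s+1},\dots,e_r\rangle=\ker\seminorm{\cdot}$, where it is again a weighted Gauss seminorm for the positive weights $c_0,\dots,c_s$; a pullback of a multiplicative seminorm along a surjection of $K$-algebras is again one, and $J(\seminorm{\cdot})\neq0$ since some $c_i>0$. Rescaling $\seminorm{\cdot}$ by $t$ multiplies $J(\seminorm{\cdot})$ on $S^nV^{\ast}$ by $t^n$, i.e.\ acts by homothety on $\P(V)^{\an}$, so $J$ descends to $\calXbar(V)\to\P(V)^{\an}$. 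Independence of the diagonalizing basis is then formal: multiplicativity together with $J(\seminorm{\cdot})|_{V^{\ast}}=\seminorm{\cdot}$ forces $J(\seminorm{\cdot})$ to send any product of linear forms $\ell_1\cdots\ell_n$ to $\prod_k\seminorm{\ell_k}$, which is coordinate-free; expanding $f$ in the monomials of a second diagonalizing basis and applying the ultrametric inequality yields $J_{\mathbf e}(f)\le J_{\mathbf e'}(f)$, hence equality by symmetry.

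\emph{Section property and reduction for continuity.} The map $\tau$ is restriction to $S^1V^{\ast}=V^{\ast}$, and by construction $|\sum\lambda_ie_i|_{J(\seminorm{\cdot})}=\max_i\{c_i|\lambda_i|\}=\seminorm{\sum\lambda_ie_i}$, so $\tau\circ J=\id$, compatibly with homothety. For continuity when $K$ is local, since the topology of $\P(V)^{\an}$ is generated on each chart by the functions $[y]\mapsto|f|_y/|g|_y$ for $f,g$ of equal degree, it suffices to show that $M_f\colon\calXbar(V)\to\R_{\geq0}$, $x\mapsto|f|_{J(x)}$, is continuous for every homogeneous $f$. Note first that $J(x)$ is the \emph{maximum} of the fiber $\tau^{-1}(x)$: if $z\in\tau^{-1}(x)$ and $\mathbf e$ diagonalizes $x$ with weights $c_i$, then (normalizing representatives to agree on $V^{\ast}$) $|e_i|_z=c_i$, hence $|f|_z\le\max_I|a_I|\prod_k|e_{i_k}|_z=\max_I|a_I|c^I=|f|_{J(x)}$; thus $M_f(x)=\sup_{z\in\tau^{-1}(x)}|f|_z=\sup_{y\in D_x}|f(y)|$ with $D_x=\{y:|\ell|_y\le\seminorm{\ell}_x\ \forall\ell\in V^{\ast}\}$ a closed polydisc. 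Upper semicontinuity of $M_f$ needs no hypothesis on $K$: fixing a diagonalizing basis $\mathbf e_0$ of the limit point $x_0$ and writing $f=\sum_I a_I\mathbf e_0^I$, the ultrametric inequality gives $M_f(x)\le\max_I\{|a_I|\prod_k\seminorm{e_{0,i_k}}_x\}$, whose right-hand side is continuous in $x$ (the basis is now fixed) and equals $M_f(x_0)$ at $x_0$; so $\limsup_{x\to x_0}M_f(x)\le M_f(x_0)$.

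\emph{Lower semicontinuity — the crux — and where ``local'' enters.} The cleanest route I would take is to deduce continuity of $J$ from closedness of $\mathrm{im}(J)$ in $\P(V)^{\an}$: given $x_\alpha\to x_0$, compactness of $\P(V)^{\an}$ gives a subnet with $J(x_{\alpha_\beta})\to y_\infty$; continuity of $\tau$ and $\tau\circ J=\id$ force $y_\infty\in\tau^{-1}(x_0)$, and if $\mathrm{im}(J)$ is closed then $y_\infty\in\mathrm{im}(J)\cap\tau^{-1}(x_0)=\{J(x_0)\}$, so every subnet of $(J(x_\alpha))$ sub-converges to $J(x_0)$ and $J$ is continuous (equivalently, each $M_f$ is then lower semicontinuous). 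To prove $\mathrm{im}(J)$ — the set of diagonalizable (monomial) points — is closed, write a limit point as $y_\infty=\lim y_\alpha$ with $y_\alpha$ monomial for data $(\mathbf e_\alpha,\vec c_\alpha)$; normalize $\vec c_\alpha$ (say $\max_ic_{\alpha,i}=1$) and rescale each $e_{\alpha,i}$ so that $\seminorm{e_{\alpha,i}}_{\tau(y_\alpha)}=c_{\alpha,i}$. Here is the one place where local compactness of $K$ is used: the seminorms $\seminorm{\cdot}_{\tau(y_\alpha)}$ converge pointwise, hence are uniformly bounded on a fixed basis, and since the unit sphere of $V^{\ast}\cong K^{r+1}$ is compact, this confines the normalized $e_{\alpha,i}$ to a fixed compact subset of $V^{\ast}$; extracting a convergent subnet of bases and of weights and passing to the limit (ruling out degeneration of the basis via the same boundedness on the sphere) exhibits $y_\infty$ as a monomial point, so $y_\infty\in\mathrm{im}(J)$.

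I expect lower semicontinuity of $M_f$ — equivalently, closedness of $\mathrm{im}(J)$ — to be the main obstacle, and the only point at which the hypothesis that $K$ is local is genuinely needed: the diagonalizing basis of $\seminorm{\cdot}_x$ cannot be chosen continuously in $x$ near seminorms with a repeated ``eigenvalue'', so the data $\max_I|a_I(x)|c(x)^I$ computing $M_f(x)$ jump discontinuously, and one must show these jumps are asymptotically harmless; it is exactly local compactness (compactness of $\OO_K$, equivalently density of $\overline{K}$-points, which holds since a local field has non-trivial valuation) that supplies the compactness needed to pass to the limit. Alternatively — and this is likely the shortest write-up — one identifies $J$ with the embedding of the compactified building into $(\P^r)^{\an}$ studied in \cite{RemyThuillierWernerII}, which is there shown to be a homeomorphism onto a closed subset, so that continuity is immediate.
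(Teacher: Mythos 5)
The paper does not actually prove this proposition: its ``proof'' is a single-sentence citation to \cite[Section~3]{RemyThuillierWernerII}, with the remark that everything except continuity of $J$ carries over from the local to the spherically complete case. So your self-contained write-up is a genuinely different route, and most of it is a useful unpacking of what the citation hides. The well-definedness of the Gauss seminorm, the basis-independence argument (expand one diagonalizing monomial basis in another and use the ultrametric inequality in both directions), the section property $\tau\circ J=\id$, and the identification of $J(x)$ as the pointwise maximum of $\tau^{-1}(x)$ are all correct and cleanly argued. Your observation that upper semicontinuity of $M_f$ needs no hypothesis on $K$ (fix a diagonalizing basis of the limit seminorm, expand $f$ there, and bound $M_f(x)$ by a manifestly continuous expression in $x$ that agrees with $M_f(x_0)$ at $x_0$) is also right, and goes slightly beyond what the statement asks.

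The gap is in the closedness-of-image argument for lower semicontinuity. Writing $y_\alpha=J(x_\alpha)\to y_\infty$ and normalizing the diagonalizing data $(\mathbf e_\alpha,\vec c_\alpha)$ so that the $e_{\alpha,i}$ lie on the unit sphere of $V^\ast$, local compactness does give a convergent subnet of basis vectors; but ``ruling out degeneration of the basis via the same boundedness on the sphere'' is not justified. Unit vectors can converge to a linearly dependent tuple (e.g.\ $e_1$ and $e_1+\epsilon e_2$ with $\epsilon\to 0$), and the obvious way to exclude this — lower-bound $\|\sum a_ie_{\alpha,i}\|_{x_\alpha}=\max_i|a_i|c_{\alpha,i}$ — fails precisely when $x_0$ is a proper seminorm, since then some $c_{\alpha,i}\to 0$ and the would-be lower bound collapses. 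This boundary case is exactly where a careful argument is needed, and is part of what \cite{RemyThuillierWernerII} actually does. Your closing sentence, that the shortest write-up is to identify $J$ with the embedding of the compactified building studied in \cite{RemyThuillierWernerII} and quote their closedness result, is in fact precisely the route the paper takes.
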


\begin{proof}
 We refer the reader to \cite[Section 3]{RemyThuillierWernerII} for details on this construction. Note that in \cite{RemyThuillierWernerII} the authors  assume that $K$ is local, although everything but the continuity of $J$ goes through when $K$ is spherically complete. 
\end{proof}

\begin{remark}
We do not know whether $J$ is continuous, when $K$ is not local. Luckily this statement is not needed in the remainder of this article.
\end{remark}

\begin{proposition}\label{restriction_surjective}
Let $K$ be any complete non-Archimedean field. Then the restriction map 
$$\tau: \P(V)^{\an}\to \calXbar(V)$$ is surjective. 
\begin{proof}
 We want to construct a section $\calXbar(V)\to \P(V)^{\an}$ in the general case. We pick any spherically complete extension $L/K$
and denote $V\otimes_K L$ by $V_L$. Then for any seminorm $||.||$ on $V^{\ast}$ we have an induced seminorm $||.||_L$ on $V^{\ast}_L$ given by
$$||w||_L= \inf \max_i\big\{|\lambda_i| \cdot||v_i||\big\}\ , $$
where the infimum runs over all possible decompositions $w=\sum_i \lambda_i v_i$ with $\lambda_i \in L$ and $v_i \in V^{\ast}$. It has been shown in \cite[Proposition 1.25]{boucksom_spaces_of_norms}  that the map $\calXbar(V)\to \calXbar(V_L)$ given by $||.||\mapsto ||.||_L$ is injective and that this map is a section of the natural restriction map $\calXbar(V_L)\to \calXbar(V)$.
Now, we define a section of $\tau$ by composing
$$\calXbar(V)\longrightarrow \calXbar(V_L)=\calBbar(V_L)\xlongrightarrow{J} \P(V_L)^{\an} \longrightarrow \P(V)^{\an}$$
where the last map is the restriction of a multiplicative seminorm on the symmetric algebra $S^{\bullet}V^{\ast}\subseteq S^{\bullet}V_L^{\ast} $.
\end{proof}
\end{proposition}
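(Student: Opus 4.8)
The plan is to prove surjectivity of $\tau$ by exhibiting a set-theoretic section $\calXbar(V)\to\P(V)^{\an}$; since only surjectivity as a map of sets is asserted, this section need not be continuous, which matters because the Gauss-type section $J$ is known to be continuous only over local fields. A purely internal approach — reduce a general seminorm to a norm by quotienting out its kernel, then approximate norms by diagonalizable ones using density — would at best show that the image of $\tau$ is \emph{dense}, because over a field that is not spherically complete there genuinely exist non-diagonalizable norms and the construction of $J$ needs a diagonalizing basis. So instead I would base change to a spherically complete extension, build the section there, and descend.

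In detail: first fix a spherically complete immediate extension $L/K$, which exists by \cite[Satz 24]{Krull}, and set $V_L=V\otimes_K L$. By Proposition~\ref{prop_sphericallycomplete} every seminorm on a finite-dimensional $L$-vector space is diagonalizable, so $\calXbar(V_L)=\calBbar(V_L)$, and hence the section $J\colon\calXbar(V_L)\to\P(V_L)^{\an}$ of the restriction map $\tau_L\colon\P(V_L)^{\an}\to\calXbar(V_L)$ constructed above for spherically complete base fields is available. Second, recall the base change of seminorms: a non-trivial seminorm $\seminorm{\cdot}$ on $V^\ast$ induces a seminorm $\seminorm{\cdot}_L$ on $V_L^\ast$ by the stated infimum over $L$-linear decompositions, and by \cite[Proposition 1.25]{boucksom_spaces_of_norms} this defines a map $\calXbar(V)\to\calXbar(V_L)$ that is a section of the restriction map $\calXbar(V_L)\to\calXbar(V)$ induced by $V^\ast\hookrightarrow V_L^\ast$. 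Third, the inclusion of graded $K$-algebras $S^\bullet V^\ast\hookrightarrow S^\bullet V_L^\ast$ induces, by restriction of multiplicative seminorms, a map $\P(V_L)^{\an}\to\P(V)^{\an}$. Composing these three maps, set
\[
s\colon\calXbar(V)\longrightarrow\calXbar(V_L)=\calBbar(V_L)\xlongrightarrow{J}\P(V_L)^{\an}\longrightarrow\P(V)^{\an}.
\]
The final step is to check $\tau\circ s=\id_{\calXbar(V)}$: given $x\in\calXbar(V)$ with base change $x_L\in\calXbar(V_L)$, the seminorm representing $\tau(s(x))$ is obtained from the multiplicative seminorm $J(x_L)$ by first restricting to $S^\bullet V^\ast$ and then to the degree-one part $V^\ast$; this agrees with restricting $J(x_L)$ to $V_L^\ast=S^1V_L^\ast$ and then to $V^\ast$, which equals the seminorm representing $x_L$ restricted to $V^\ast$ since $\tau_L\circ J=\id$, and the latter is (a representative of) $x$ by the second step. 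Hence $s$ is a section of $\tau$ and $\tau$ is surjective.

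The main difficulty is bookkeeping rather than mathematics: one must carefully distinguish the three ``restriction'' operations in play — of multiplicative seminorms along $S^\bullet V^\ast\hookrightarrow S^\bullet V_L^\ast$, of vector-space seminorms along $V^\ast\hookrightarrow V_L^\ast$, and the passage to the degree-one graded piece — and verify that they interleave exactly as the last step requires. A handful of routine points must also be recorded: that $\seminorm{\cdot}_L$ is genuinely a seminorm and stays non-trivial, so that $x_L$ really lands in $\calXbar(V_L)$ and not merely in the larger set including the trivial seminorm; that the restriction of $J(x_L)$ to $S^\bullet V^\ast$ is not the excluded ``origin'' seminorm — true because its degree-one part is the non-trivial seminorm representing $x$ — so that it descends to a genuine point of $\P(V)^{\an}$; and that this restriction respects the grading and the normalization $\lvert a\rvert=\lvert a\rvert_K$ for $a\in K$, so the induced map $\P(V_L)^{\an}\to\P(V)^{\an}$ is well defined. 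All of these are immediate from, or contained in, the cited references.
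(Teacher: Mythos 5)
Your proposal is correct and follows essentially the same route as the paper: pass to a spherically complete extension $L/K$, base change the seminorm via \cite[Proposition 1.25]{boucksom_spaces_of_norms}, apply the Gauss-type section $J$ over $L$, and restrict back along $S^\bullet V^\ast\hookrightarrow S^\bullet V_L^\ast$. You supply a somewhat more explicit verification that $\tau\circ s=\id$ and spell out the routine well-definedness checks, which the paper leaves implicit, but the construction itself is identical.
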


\begin{corollary}\label{cor_seminormcompact}
The space $\calXbar(V)$ is compact.
\end{corollary}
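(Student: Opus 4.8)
The plan is to combine Proposition~\ref{restriction_surjective} with the standard fact that the Berkovich analytification of a proper $K$-scheme is compact. Since $\P(V)=\Proj(S^{\bullet}V^{\ast})$ is proper over $K$, the analytification $\P(V)^{\an}$ is compact (see \cite[Chapter 3]{Berkovich_book}). By Proposition~\ref{restriction_surjective} the restriction map $\tau\colon\P(V)^{\an}\to\calXbar(V)$ is continuous and surjective, and the continuous image of a quasi-compact space is quasi-compact; hence $\calXbar(V)$ is quasi-compact. The only remaining point is to check that $\calXbar(V)$ is Hausdorff, and this is the step I expect to require some care, since $\calXbar(V)$ is defined as a quotient by homothety and quotients need not be Hausdorff a priori.

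To handle Hausdorffness I would pass to a convenient section of the homothety relation. The space $\calNbar(V)$ carries the topology of pointwise convergence and is Hausdorff: two distinct non-trivial seminorms on $V^{\ast}$ disagree on some $v$ and are separated by the evaluation map $ev_v$. Fix a basis $\mathbf{e}=(e_0,\dots,e_r)$ of $V^{\ast}$ and set
\[
N=\big\{x\in\calNbar(V)\ \suchthat\ \max_i\seminorm{e_i}_x=1\big\}.
\]
Every non-trivial seminorm is homothetic to a unique element of $N$, because $\max_i\seminorm{e_i}_x>0$ for $x\in\calNbar(V)$ (if all $\seminorm{e_i}_x$ vanished, then $\seminorm{v}_x\le\max_i|\lambda_i|\seminorm{e_i}_x=0$ for every $v=\sum_i\lambda_i e_i$, forcing $x$ to be trivial). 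Hence the rescaling $r\colon\calNbar(V)\to N$, $x\mapsto x/\max_i\seminorm{e_i}_x$, is well defined, and it is continuous because for each $v$ the function $x\mapsto\seminorm{v}_x/\max_i\seminorm{e_i}_x$ is a ratio of continuous functions with nowhere-vanishing denominator. Since $r$ restricts to the identity on $N$, the homothety quotient map $q\colon\calNbar(V)\to\calXbar(V)$ factors as $q=q|_N\circ r$; a routine comparison of topologies (using that $q$ is a quotient map and $r$ is a continuous surjection with $r|_N=\id_N$) shows that $q|_N\colon N\to\calXbar(V)$ is a continuous bijective quotient map, hence a homeomorphism. As $N$ is a closed subspace of the Hausdorff space $\calNbar(V)$, it is Hausdorff, and therefore so is $\calXbar(V)$. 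Combined with quasi-compactness, this proves that $\calXbar(V)$ is compact.

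Everything except the Hausdorff verification is an immediate consequence of Proposition~\ref{restriction_surjective} and properness of $\P(V)$, so that is where the small argument is concentrated. As an alternative that bypasses Berkovich spaces altogether, one could instead prove directly that $N$ is compact via Tychonoff, realizing it as a closed subset of $\prod_{v\in V^{\ast}\setminus\{0\}}[0,c_v]$ with $c_v=\max_i|\lambda_i|$ for $v=\sum_i\lambda_i e_i$; but phrasing the proof through $\tau$ keeps the corollary aligned with the preceding proposition.
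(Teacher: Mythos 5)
Your argument is correct and follows the same main route as the paper: combine Proposition~\ref{restriction_surjective} with compactness of $\P(V)^{\an}$, so that $\calXbar(V)$ is a continuous image of a compact space. The paper's proof is a one-liner that treats this as immediate; you add a careful verification that $\calXbar(V)$ is Hausdorff (via the normalized slice $N=\{x:\max_i\seminorm{e_i}_x=1\}$, the continuous rescaling retraction $r$, and the factorization $q=q|_N\circ r$ of the quotient map), which the paper leaves implicit but which is genuinely needed since quotient spaces are not Hausdorff for free. That portion of your write-up, together with the Tychonoff-based alternative you sketch at the end, is a worthwhile supplement rather than a deviation in strategy; the rest coincides with the paper's reasoning.
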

\begin{proof}
    This follows immediately from surjectivity of $\tau: \P(V)^{\an}\to \calXbar(V)$ and compactness of $\P(V)^{\an}$ (see e.g.~ \cite[Theorem 3.4.8 and 3.5.3]{Berkovich_book}).
\end{proof}

\begin{remark}
 We do not know if the section constructed in the proof of Proposition \ref{restriction_surjective} is independent of the choice of the spherical completion (which need not be unique in some cases in positive characteristic, see  \cite[Theorem 6.17]{barria_summary_non_Archimedean}), or if the section is continuous.
\end{remark}

\begin{remark}\label{notopen}
    We can now show that $\calN(V)\subseteq \calNbar(V)$ need not be open. 
    Let $K$ be algebraically closed. The set of $K$-points of $\P(V)$ is dense in $\P(V)^{\an}$ and gets mapped to homothety classes of proper diagonalizable seminorms in $\calXbar(V)$, thus its image is also dense.
    Consequently, neither inclusion $\calX(V)\subseteq\calXbar(V)$, nor $\calB(V)\subseteq\calBbar(V)$, nor $\calN(V)\subseteq \calNbar(V)$ can be open in this case.
\end{remark}

From now on we consider the vector space $V=K^{n+1}$ together with its standard basis $\mathbf{e} = (e_0,\ldots,e_n)$ and the associated dual basis $\mathbf{e}^\ast = (e_0^\ast,\ldots, e_n^\ast)$ of $V^\ast$. This identifies $\A(V)$ and $\P(V)$ with $\A^{n+1}=\Spec K[t_0,\ldots, t_n]$ and $\P^n=\Proj K[t_0,\ldots, t_n]$ respectively. As explained e.g.~in \cite{Payne_anallimittrop}, there is a natural continuous tropicalization map 
\begin{equation*}\begin{split}
    \trop_{\A^{n+1}}\colon (\A^{n+1})^{\an}&\longrightarrow \Rbar^{n+1}\\
    x&\longmapsto \big(-\log\vert t_0\vert_x, \ldots, -\log\vert t_n\vert_x\big) 
   \end{split}
\end{equation*}
that is compatible with the diagonal $\G_m$-operation. Therefore, this induces a tropicalization map 
\begin{equation*}\begin{split}
    \trop_{\P^n}\colon(\P^n)^{\an}&\longrightarrow \T\P^n\\
    x&\longmapsto \big[-\log\vert t_0\vert_x:\cdots: -\log\vert t_n\vert_x\big] \ .
\end{split}\end{equation*}
Tropicalizing the analytification of the big torus in $\P^n$ yields only elements with finite coordinates:
$$\trop_{\P^n}(\G_m^n)^{\an} = \R^{n+1}/\mathbbm{1}.$$

\begin{proposition}\label{tropicalization_factors}
The tropicalization map $\trop_{\P^n}$ factors as 
\begin{equation}\label{eq_factorization}
    (\P^n)^{\an}\xlongrightarrow{\tau}\calXbar_n(K)\xlongrightarrow{\trop_{\calXbar_n}} \T\P^n \ ,
\end{equation}
where $\trop_{\calXbar_n}\colon \calXbar_n(K)\rightarrow\T\P^n$ is a continuous and surjective map given by associating to a seminorm $\vert|.|\vert_x\colon V^\ast\rightarrow \R$ the tuple 
\begin{equation*}
    \trop_{\calXbar_n}(x)=\big[-\log\vert| e_0^\ast|\vert_x :\cdots : -\log\vert| e_n^\ast|\vert_x\big] \in \T\P^{n} \ .
\end{equation*}
\end{proposition}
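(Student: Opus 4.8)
The plan is to verify, in turn, that $\trop_{\calXbar_n}$ is well defined on homothety classes, that the displayed triangle commutes, and that $\trop_{\calXbar_n}$ is continuous and surjective; each point unwinds directly from the definitions. For well-definedness: given a non-trivial seminorm $\|.\|_x$ on $V^\ast$, the strong triangle inequality gives $\|v\|_x\le\max_i|\lambda_i|\cdot\|e_i^\ast\|_x$ for $v=\sum_i\lambda_i e_i^\ast$, since $\mathbf e^\ast=(e_0^\ast,\dots,e_n^\ast)$ is a basis of $V^\ast$; hence if every $\|e_i^\ast\|_x$ vanished the seminorm would be trivial. So the tuple $(-\log\|e_0^\ast\|_x,\dots,-\log\|e_n^\ast\|_x)\in\Rbar^{n+1}$ is not $(\infty,\dots,\infty)$ and determines a point of $\T\P^n$. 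Passing from $\|.\|_x$ to a homothetic seminorm $c\,\|.\|_x$ with $c>0$ translates this tuple by the scalar $-\log c$, i.e.\ by a real multiple of $\mathbbm 1$, so its class in $\T\P^n$ is unchanged, and $\trop_{\calXbar_n}$ descends to $\calXbar_n(K)$.

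Next I would establish the factorization. Under the standard identification $S^\bullet V^\ast=K[t_0,\dots,t_n]$ with $t_i=e_i^\ast$, the map $\tau$ sends a multiplicative seminorm $|.|_x$ on $S^\bullet V^\ast$ to its restriction to $V^\ast=S^1V^\ast$, so $\|e_i^\ast\|_{\tau(x)}=|t_i|_x$. Comparing with the definition of $\trop_{\P^n}$ gives $\trop_{\calXbar_n}(\tau(x))=[-\log|t_0|_x:\cdots:-\log|t_n|_x]=\trop_{\P^n}(x)$, which is exactly the commutativity of~(\ref{eq_factorization}).

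For continuity and surjectivity: by definition the topology on $\calXbar_n(K)$ is the coarsest making each evaluation $x\mapsto\|v\|_x$ continuous, so in particular $x\mapsto\|e_i^\ast\|_x$ is continuous into $[0,\infty)$ and hence $x\mapsto-\log\|e_i^\ast\|_x$ is continuous into $\Rbar$ (with $-\log 0=\infty$). These assemble into a continuous map $\calXbar_n(K)\to\Rbar^{n+1}$ whose image lies in $\Rbar^{n+1}\setminus\{(\infty,\dots,\infty)\}$ by the first paragraph, and post-composing with the continuous quotient onto $\T\P^n$ shows $\trop_{\calXbar_n}$ is continuous. For surjectivity, given a point of $\T\P^n$ with representative $\vec a=(a_0,\dots,a_n)\in\Rbar^{n+1}$ not all equal to $\infty$, the diagonalizable seminorm $\|.\|_{\mathbf e^\ast,\vec a}$ is non-trivial and satisfies $\|e_i^\ast\|_{\mathbf e^\ast,\vec a}=e^{-a_i}$, so its homothety class maps to $[a_0:\cdots:a_n]$; alternatively, surjectivity follows formally from the factorization together with the known surjectivity of $\trop_{\P^n}$ and of $\tau$ (Proposition~\ref{restriction_surjective}).

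I do not anticipate a genuine obstacle here: the argument rests only on the axioms for seminorms, the definition of the topology on $\calXbar_n(K)$, and the quotient topology on $\T\P^n$. The only points that require a little care are the behaviour of $-\log$ at $0\in\Rbar$ and the homothety-invariance of the formula, both dispatched above; everything else is routine bookkeeping.
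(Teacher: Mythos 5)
Your proposal is correct and follows essentially the same route as the paper's proof: homothety-invariance is checked by the same shift-by-$-\log c$ computation, continuity comes from the definition of the topology via evaluation maps, surjectivity via the compactified-apartment section $\vec a\mapsto\|.\|_{\mathbf e^\ast,\vec a}$, and the factorization from the identification $e_i^\ast\leftrightarrow t_i$ under $S^\bullet V^\ast\simeq K[t_0,\dots,t_n]$. The one small thing you add beyond the paper is the explicit check, via the strong triangle inequality, that the image tuple is not $(\infty,\dots,\infty)$ — the paper leaves this implicit in the non-triviality of the seminorm, and it is a welcome bit of rigor, but it does not change the argument.
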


\begin{proof}
 
The tropicalization map $\trop_{\calXbar_n}$ is well-defined, since, for two seminorms $\vert\vert.\vert\vert_x,\vert\vert.\vert\vert_y$ on $V^\ast$ together with a homothety $x\sim y$ we have a $c>0$ such that $\vert\vert.\vert\vert_x=c\cdot \vert\vert.\vert\vert_y$ and thus 
\begin{equation*}\begin{split}
    \trop_{\calXbar_n}(x)&=\big[-\log\vert\vert e_0^\ast\vert\vert_x :\cdots : -\log\vert\vert e_n^\ast\vert\vert_x\big]  \\
    &= \big[-\log c-\log\vert\vert e_0^\ast\vert\vert_y :\cdots : -\log c -\log\vert\vert e_n^\ast\vert\vert_y\big]\\
    &= \big[-\log\vert\vert e_0^\ast\vert\vert_y :\cdots : -\log\vert\vert e_n^\ast\vert\vert_y\big] \\&=\trop_{\calXbar_n}(y) \ .
\end{split}\end{equation*}
It is continuous, since it is given by evaluation maps in each coordinate, and surjective, since the compactified apartment map
\begin{equation*}\begin{split}
    \Rbar^{n+1}-\big\{\infty^{n+1}\big\}&\longrightarrow \calBbar_n(K)\\
    (a_0,\ldots, a_{n})&\longmapsto \vert\vert.\vert\vert_{\mathbf{e}^\ast,(a_0,\ldots, a_{n})}
\end{split}\end{equation*}
induces a continuous section $\T\P^n\rightarrow\calBbar_n(K)\subseteq\calXbar_n(K)$ of $\trop_{\calXbar_n}$. The factorization \eqref{eq_factorization} follows from the observation that, under the identification $S^{\bullet}V^\ast\simeq K[t_0,\ldots, t_n]$ the linear one-form $e_i^\ast$ is naturally identified with the linear polynomial $t_i$. Therefore, we have $\vert t_i\vert_x=\vert\vert e_i^\ast\vert\vert_{\tau(x)}$ for all $x\in (\P^n)^{\an}$ and $i=0,\ldots, n$. This implies
\begin{equation*}
    \begin{split}
        \trop_{\calXbar_n}(\tau(x))&=\big[-\log\vert\vert e_0^\ast\vert\vert_{\tau(x)} :\cdots : -\log\vert\vert e_n^\ast\vert\vert_{\tau(x)}\big]  \\
        &=\big[-\log\vert t_0\vert_{x} :\cdots : -\log\vert t_n\vert_{x}\big] \\&= \trop_{\P^n}(x) 
    \end{split}
\end{equation*}
for all $x\in(\P^n)^{\an}$.
\end{proof}

Similarly, tropicalizing the non-compactified space of norms, one obtains all finite points:
$$\trop_{\calXbar_n}(\calX_n(K))=\R^{n+1}/\mathbbm{1}.$$

Let $Y\subseteq \P^n$ be a Zariski-closed subscheme. Then, following \cite{Payne_anallimittrop, Gubler_guide}, the \emph{tropicalization} of $Y$ is defined to be the projection 
\begin{equation*}
    \Trop_{\P^n}(Y)=\trop_{\mathbb{P}^n}(Y^{\an})
\end{equation*}
of $Y^{\an}$ under $\trop_{\P^n}$ into $\T\P^n$. Let $L$ be an algebraically closed extension of $K$ with non-trivial absolute value $|.|_L$. By \cite[Proposition 3.8]{Gubler_guide} the tropicalization  $\Trop_{\P^n}(Y)$ is equal to the closure of 
    $$\Big\{\big[-\log(|t_0|_L):\cdots:-\log(|t_n|_L)\big] \ \Big\vert\  [t_0:\cdots:t_n]\in Y(L)\subset \P^n(L)\Big\} \subseteq \T\P^n.$$

\begin{proposition}\label{prop:tropIsProj}
For a linear embedding $\iota\colon\P^r\hookrightarrow\P^n$ the tropicalization $\Trop(\P^r,\iota)=\trop_{\P^n}\big(\iota(\P^r)^{\an}\big)$ is equal to the projection of $\calXbar(\iota)\big(\calXbar_r(K)\big)\subseteq \calXbar_n(K)$ under $\trop_{\calXbar_n}$ into $\T\P^n$.
\end{proposition}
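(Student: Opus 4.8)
The plan is to deduce this from the factorization established in Proposition \ref{tropicalization_factors} together with the surjectivity of the restriction map $\tau$ and its compatibility with linear closed immersions. Concretely, I would argue as follows. The linear closed immersion $\iota\colon\P^r\hookrightarrow\P^n$ corresponds (up to the choice of bases) to a linear surjection $V_n^\ast=(K^{n+1})^\ast\twoheadrightarrow V_r^\ast=(K^{r+1})^\ast$, equivalently to a linear injection $V_r\hookrightarrow V_n$, which by functoriality of analytification induces $\iota^{\an}\colon(\P^r)^{\an}\hookrightarrow(\P^n)^{\an}$ and by the covariance of $V\mapsto\calXbar(V)$ discussed before induces $\calXbar(\iota)\colon\calXbar_r(K)\hookrightarrow\calXbar_n(K)$.

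The key step is the commutativity of the square
\begin{equation*}
\begin{tikzcd}
(\P^r)^{\an}\arrow[r,"\iota^{\an}"]\arrow[d,"\tau_r"'] & (\P^n)^{\an}\arrow[d,"\tau_n"]\\
\calXbar_r(K)\arrow[r,"\calXbar(\iota)"'] & \calXbar_n(K).
\end{tikzcd}
\end{equation*}
This holds because $\tau$ is defined by restricting a multiplicative seminorm on the symmetric algebra to its degree-one part $V^\ast$, and both $\iota^{\an}$ and $\calXbar(\iota)$ are induced by the same linear map $V_n^\ast\to V_r^\ast$ on one-forms; so this is just the statement that pulling back a multiplicative seminorm along $S^\bullet V_n^\ast\to S^\bullet V_r^\ast$ and then restricting to degree one agrees with restricting to degree one first and then pulling back along $V_n^\ast\to V_r^\ast$, which is immediate. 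I would then chain this with the factorization $\trop_{\P^n}=\trop_{\calXbar_n}\circ\tau_n$ from Proposition \ref{tropicalization_factors}:
\begin{equation*}
\Trop(\P^r,\iota)=\trop_{\P^n}\big(\iota^{\an}((\P^r)^{\an})\big)=\trop_{\calXbar_n}\big(\tau_n(\iota^{\an}((\P^r)^{\an}))\big)=\trop_{\calXbar_n}\big(\calXbar(\iota)(\tau_r((\P^r)^{\an}))\big).
\end{equation*}
By Proposition \ref{restriction_surjective}, $\tau_r\colon(\P^r)^{\an}\to\calXbar_r(K)$ is surjective, so $\tau_r((\P^r)^{\an})=\calXbar_r(K)$, and therefore the right-hand side equals $\trop_{\calXbar_n}\big(\calXbar(\iota)(\calXbar_r(K))\big)$, which is exactly the claim.

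The only genuinely delicate point is bookkeeping with the choices of bases so that the linear map $V_n^\ast\to V_r^\ast$ inducing $\iota$ is the same one used to define $\calXbar(\iota)$ and so that it matches the identification $S^\bullet V^\ast\simeq K[t_0,\dots,t_n]$ used in the definition of $\trop_{\P^n}$; once the diagram is set up compatibly, the argument is a diagram chase plus surjectivity of $\tau$, with no real obstacle. (If one prefers to avoid $\iota$ possibly landing outside the big torus, note that the argument above uses only analytifications of projective spaces and makes no reference to $\G_m^n$, so no genericity hypothesis is needed.)
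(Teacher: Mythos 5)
Your proof is correct and follows exactly the same strategy as the paper's: commutativity of the square relating $\iota^{\an}$, $\calXbar(\iota)$, and the restriction maps $\tau$, combined with surjectivity of $\tau_r$ (Proposition \ref{restriction_surjective}) and the factorization $\trop_{\P^n}=\trop_{\calXbar_n}\circ\tau_n$ from Proposition \ref{tropicalization_factors}. You spell out the justification for the commutative square in a bit more detail than the paper (which simply displays it), but the argument is the same.
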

\begin{proof}
 Since the restriction map $\tau$ is surjective, the commutativity of 
 \begin{equation*}\begin{tikzcd}
 (\P^{r})^{\an}\arrow[d,"\iota^{\an}"']\arrow[r,"\tau"]&\calXbar_r(K)\arrow[d,"\calXbar(\iota)"]\\
 (\P^n)^{\an}\arrow[r,"\tau"]&\calXbar_n(K) 
 \end{tikzcd}
\end{equation*}
 implies that $\calXbar(\iota)\big(\calXbar_r(K)\big)=\tau\big(\iota^{\an}\big(\P^r)^{\an}\big)$. The factorization of the tropicalization map in Propositon \ref{tropicalization_factors} then yields the claim.
\end{proof}

Henceforth, we will define for any linear embedding $\iota:\P^r\hookrightarrow \P^n$ the composition $\pi_{\iota}$ by
$$\pi_{\iota}:=\ \trop_{\calXbar_n}\circ\ \calXbar(\iota): \calXbar_r(K)\to \Trop(\P^r,\iota).$$ 
Note that $\pi_\iota$ is continuous and surjective. A direct computation shows that if $\iota=\big[f_0:\ldots:f_n\big]$ for $f_0,\dots,f_n \in (K^{r+1})^{\ast}$ we have
\begin{equation*}\pi_{\iota}\big(x\big) = \big[-\log(||f_0||_x):\cdots : -\log(||f_n||_x)\big]
\end{equation*}
for all $x\in \calXbar_r(K)$.

\section{Limits of linear tropicalizations} 
In this section we will set up 
and prove Theorem \ref{mainthm_limits}. We first set up a category of linear embeddings such that tropicalization yields a covariant functor into the category of topological spaces.

\begin{definition}\label{def:category}
Let $I$ be the category of linear embeddings $\P^r \hookrightarrow U \subseteq \P^n$, where $U$ is a torus-invariant open subset of $\P^n$, with morphisms given by commutative triangles 
\begin{center}
        \begin{tikzcd}
        \P^r \ar[r,"\iota"] \ar[dr,"\iota'"'] & U \ar[d] \\
        & U'
        \end{tikzcd}
    \end{center}
where $U\to U'$ is a toric morphism.
\end{definition}

\begin{remark}
The codimension of the complement of $U\subseteq \P^n$ in Definition \ref{def:category} must be at least $r+1$. Thus, for all $j=0,\dots, r$, the Chow group of $U$ of codimension $j$ is isomorphic to that of $\P^n$. The degree of $\iota'$ being $1$ can be measured by intersecting its image with a generic  linear subspace of codimension~$r$. Since the same is true for $\iota$, we see that hyperplanes must pull back to hyperplanes along $U\to U'$, i.e.~this morphism is forced to be linear. \end{remark}

\begin{remark}

Allowing the toric morphism to be defined on a smaller torus-invariant open subset instead of the whole projective space endows our index category with many more morphisms, most notably coordinate projections. This makes $I$ into a cofiltered category: indeed, for any two objects $\iota_i\colon\P^r\hookrightarrow U_i\subseteq\P^{n_i},\ i=1,2$, we can find a third one dominating both, namely $\iota\colon\P^r\hookrightarrow U\subseteq\P^{N}$, where $N=n_1+n_2+1$ and \[U=\left(\P^N\setminus V(z_0,\ldots,z_{n_1})\cap\pr_1^{-1}(U_1)\right)\cap \left(\P^N\setminus V(z_{n_1+1},\ldots,z_{n_1+n_2+1})\cap\pr_2^{-1}(U_2)\right).\]
And for any two morphisms $f,g\colon U_1\to U_2$ commuting with $\iota_i$ as above, we can equalize them by defining $U_0=\{f=g\}\subseteq U_1$, and noticing that $\iota_1$ factors through $U_0$, and the closure of $U_0$ in $\P^{n_1}$ is a linear subspace $\P^{n_0}\subseteq\P^{n_1}$.
\end{remark}

The tropicalization $U^{\trop}$ of a torus-invariant open subset or $\P^n$ is a special case of the tropicalization of toric varieties, as introduced in  \cite[Section 3]{Payne_anallimittrop}. In our case this means removing those tropical torus-orbits from $\T\P^n$, for which the corresponding algebraic torus orbit is not contained in $U$. For a toric morphism $\varphi\colon U\rightarrow U'$ such that $\varphi\circ \iota=\iota'$, we have a natural induced map $\varphi^{\trop}\colon U^{\trop}\rightarrow U'^{\trop}$ such that $\varphi^{\trop}\big(\Trop(\P^r,\iota)\big)\subseteq \Trop(\P^r,\iota')$. In particular, if $\phi$ is a coordinate projection, then $\phi^{\trop}$ is the analogous tropical coordinate projection. We refer the reader to \cite{Payne_anallimittrop}, to  \cite[Section 3 and 5]{Rabinoff}, and to \cite[Section 6.2]{MaclaganSturmfels} for more details on the tropical geometry of toric varieties.

Let $\iota: \P^r \hookrightarrow U$ be a linear closed immersion, where $U$ is a torus-invariant open subset of $\P^n$, and let $\jmath:U\to \P^n$ be the inclusion. Then we have a homeomorphism of tropicalizations
$$\Trop(\P^r,\iota)\xrightarrow{\jmath^{\trop}} \Trop(\P^r, \jmath\circ \iota) \subseteq \T\P^n.$$
Using this observation we will henceforth identify any tropicalization arising from a linear morphism $\P^r \to U$ with a subset of $\T\P^n$.

\begin{lemma}
    Let $\iota:\P^r\hookrightarrow U\subseteq \P^n$ and $ \iota':\P^r\hookrightarrow U'\subseteq \P^{n'}$ be linear embeddings and $\varphi: U\to U'$ be a toric morphism with $\varphi\circ \iota = \iota'$. Then the diagram 
    \begin{center}
        \begin{tikzcd}
        \calXbar_r(K) \ar[r, "\pi_{\iota}"] \ar[dr, "\pi_{\iota'}"'] & \Trop(\P^r,\iota) \ar[d, "\varphi^{\trop}"] \\
        & \Trop(\P^r,\iota')
        \end{tikzcd}
    \end{center}
    commutes.
    \begin{proof}
        This follows immediately from Proposition \ref{tropicalization_factors} and the fact that the same holds for the Berkovich analytification \cite{Payne_anallimittrop}.
    \end{proof}
\end{lemma}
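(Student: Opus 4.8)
The plan is to reduce the statement to the analogous commutativity on Berkovich analytic spaces --- which is Payne's functoriality of toric tropicalization, already invoked in the discussion preceding the lemma --- by lifting along the surjective restriction map $\tau\colon(\P^r)^{\an}\to\calXbar_r(K)$ of Proposition~\ref{restriction_surjective}. Recall that by definition $\pi_\iota=\trop_{\calXbar_n}\circ\calXbar(\iota)$ and $\pi_{\iota'}=\trop_{\calXbar_{n'}}\circ\calXbar(\iota')$, that by Proposition~\ref{tropicalization_factors} one has $\trop_{\calXbar_n}\circ\tau=\trop_{\P^n}$ and $\trop_{\calXbar_{n'}}\circ\tau=\trop_{\P^{n'}}$, and that from the proof of Proposition~\ref{prop:tropIsProj} the naturality square $\calXbar(\iota)\circ\tau=\tau\circ\iota^{\an}$ commutes, and likewise for $\iota'$. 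On the Berkovich side: since $\iota$ factors through the torus-invariant open $U\subseteq\P^n$, the image of $\iota^{\an}$ lies in $U^{\an}$; functoriality of analytification gives $\varphi^{\an}\circ\iota^{\an}=(\varphi\circ\iota)^{\an}=\iota'^{\an}$; and Payne's functoriality yields $\varphi^{\trop}\circ\trop_{\P^n}=\trop_{\P^{n'}}\circ\varphi^{\an}$ on $U^{\an}$.

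Next I would carry out the diagram chase. Fix $x\in\calXbar_r(K)$ and, using surjectivity of $\tau$, pick $\widetilde x\in(\P^r)^{\an}$ with $\tau(\widetilde x)=x$. Then
\[
\varphi^{\trop}\big(\pi_\iota(x)\big)
=\varphi^{\trop}\big(\trop_{\calXbar_n}(\calXbar(\iota)(x))\big)
=\varphi^{\trop}\big(\trop_{\calXbar_n}(\tau(\iota^{\an}(\widetilde x)))\big)
=\varphi^{\trop}\big(\trop_{\P^n}(\iota^{\an}(\widetilde x))\big),
\]
and continuing,
\[
\varphi^{\trop}\big(\trop_{\P^n}(\iota^{\an}(\widetilde x))\big)
=\trop_{\P^{n'}}\big(\varphi^{\an}(\iota^{\an}(\widetilde x))\big)
=\trop_{\P^{n'}}\big(\iota'^{\an}(\widetilde x)\big)
=\trop_{\calXbar_{n'}}\big(\tau(\iota'^{\an}(\widetilde x))\big)
=\trop_{\calXbar_{n'}}\big(\calXbar(\iota')(x)\big)
=\pi_{\iota'}(x).
\]
Hence $\varphi^{\trop}\circ\pi_\iota=\pi_{\iota'}$, i.e.\ the triangle commutes.

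There is no serious obstacle here; the only points needing care are (i) that $\iota^{\an}$ genuinely lands in $U^{\an}$, so that the toric functoriality statement applies --- this holds because $\iota$ is presented as a morphism to $U$ --- and (ii) that the $\tau$-naturality square used above is exactly the one established in the proof of Proposition~\ref{prop:tropIsProj}, coming from the functoriality of $\tau$ in the vector space variable. As an alternative, one can argue purely combinatorially: by the remark following Definition~\ref{def:category} the toric morphism $\varphi$ is forced to be linear, so that after writing $\iota=[f_0:\cdots:f_n]$ and $\iota'=[g_0:\cdots:g_{n'}]$ with $f_i,g_j\in(K^{r+1})^{\ast}$, the map $\varphi^{\trop}$ is the corresponding (possibly repeated) coordinate projection, up to coordinate scalings pinned down by $\varphi\circ\iota=\iota'$; the identity $\varphi^{\trop}(\pi_\iota(x))=\pi_{\iota'}(x)$ then follows immediately from the formula $\pi_\iota(x)=\big[-\log(||f_0||_x):\cdots:-\log(||f_n||_x)\big]$. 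I would present the structural argument as the main proof and mention the combinatorial one only in passing.
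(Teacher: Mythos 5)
Your proof is correct and is precisely what the paper's one-line proof ("follows immediately from Proposition 3.6 and the fact that the same holds for the Berkovich analytification") compresses: you lift along the surjective $\tau$, push through the $\tau$-naturality square and the factorization $\trop_{\calXbar_n}\circ\tau=\trop_{\P^n}$, and invoke Payne's toric functoriality on the analytic side. It is the same approach, merely with the diagram chase written out explicitly.
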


This lemma yields a well-defined continuous map
$$\calXbar_r(K)\xlongrightarrow{\varprojlim_{\iota \in I} \pi_{\iota}}\varprojlim_{\iota \in I} \Trop\big(\PP^r,\iota\big)$$ 
where the limit runs over all linear embeddings $\iota:\P^r \to U\subseteq \P^n$ into torus-invariant open subsets. The limit is endowed with the coarsest topology making all projections continuous; in particular, the limit topology is generated by (the preimage under projection of) all opens in all (finite) tropicalized linear spaces. The right-hand side is thus a pro-object in the category of topological spaces.

    \begin{theorem}[Theorem \ref{mainthm_limits}]\label{thm_limits}
    The map 
    $$\varprojlim_{\iota \in I} \pi_\iota\colon \overline{\calX}_r(K)\longrightarrow\varprojlim_{\iota \in I} \Trop\big(\PP^r,\iota\big)$$ 
    is a homeomorphism.
    \end{theorem}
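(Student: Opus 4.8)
\emph{Plan.} The argument follows the blueprint of Payne's theorem that $X^{\an}=\varprojlim\Trop$. The map $\varprojlim_{\iota\in I}\pi_\iota$ is well defined and continuous by the discussion preceding the theorem (its composition with every structure projection of the limit is the continuous map $\pi_\iota$, and the compatibility $\pi_{\iota'}=\varphi^{\trop}\circ\pi_\iota$ is exactly the Lemma above). The source $\calXbar_r(K)$ is compact by Corollary \ref{cor_seminormcompact}, and the target is Hausdorff: it is a subspace of the product $\prod_{\iota}\Trop(\PP^r,\iota)$ of Hausdorff spaces, each factor being a subspace of the Hausdorff space $\T\PP^n$. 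Since a continuous bijection from a compact space to a Hausdorff space is automatically a homeomorphism, it suffices to prove that $\varprojlim_\iota\pi_\iota$ is bijective, which we do in two steps.

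\emph{Surjectivity.} Let $(x_\iota)_{\iota\in I}$ be a point of the limit, i.e.\ a compatible family $x_\iota\in\Trop(\PP^r,\iota)$ with $\varphi^{\trop}(x_\iota)=x_{\iota'}$ for every morphism $\varphi\colon\iota\to\iota'$ of $I$. Each fibre $F_\iota:=\pi_\iota^{-1}(x_\iota)$ is a closed subset of $\calXbar_r(K)$, and it is non-empty because $\pi_\iota$ is surjective. For a morphism $\iota\to\iota'$ the identity $\pi_{\iota'}=\varphi^{\trop}\circ\pi_\iota$ together with $\varphi^{\trop}(x_\iota)=x_{\iota'}$ gives $F_\iota\subseteq F_{\iota'}$. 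Since $I$ is cofiltered, any finite collection $\iota_1,\dots,\iota_k$ of objects is dominated by a single object $\iota$ with morphisms $\iota\to\iota_j$, whence $\varnothing\neq F_\iota\subseteq F_{\iota_1}\cap\dots\cap F_{\iota_k}$; thus the family $\{F_\iota\}_{\iota\in I}$ of closed sets has the finite intersection property. By compactness of $\calXbar_r(K)$ we get $\bigcap_{\iota}F_\iota\neq\varnothing$, and any element of this intersection is a preimage of $(x_\iota)_\iota$.

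\emph{Injectivity.} This is the essential point: the linear reembeddings must separate the points of $\calXbar_r(K)$. Recall that for $\iota=[f_0:\cdots:f_n]$ with $f_i\in(K^{r+1})^{\ast}$ one has $\pi_\iota(x)=\big[-\log\lVert f_0\rVert_x:\cdots:-\log\lVert f_n\rVert_x\big]$. Let $x\neq y$ in $\calXbar_r(K)$ be represented by non-homothetic non-trivial seminorms $\lVert\cdot\rVert_x,\lVert\cdot\rVert_y$ on $(K^{r+1})^{\ast}$, and suppose for contradiction that $\pi_\iota(x)=\pi_\iota(y)$ for all $\iota\in I$. Fix $v_0\in(K^{r+1})^{\ast}$ with $\lVert v_0\rVert_y\neq 0$. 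For an arbitrary $w\in(K^{r+1})^{\ast}\setminus\{0\}$ choose a finite tuple $\mathbf f$ of linear forms with no common zero containing both $v_0$ and $w$ (e.g.\ a basis of $(K^{r+1})^{\ast}$ together with $v_0$ and $w$); this defines a linear closed immersion $\iota_{\mathbf f}\colon\PP^r\hookrightarrow\PP^n$, an object of $I$. The equality $\pi_{\iota_{\mathbf f}}(x)=\pi_{\iota_{\mathbf f}}(y)$ in $\T\PP^n$ means that there is $\lambda\in\R$ with $-\log\lVert f\rVert_x=-\log\lVert f\rVert_y+\lambda$ for every $f$ in $\mathbf f$ with $\lVert f\rVert_y\neq 0$, and moreover $\lVert f\rVert_x=0\iff\lVert f\rVert_y=0$ for every $f$ in $\mathbf f$, since passing to $\T\PP^n$ preserves which coordinates are infinite. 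Applying this to $f=v_0$ shows $\lVert v_0\rVert_x\neq 0$ and $e^{-\lambda}=\lVert v_0\rVert_x/\lVert v_0\rVert_y=:c$, a positive constant independent of the choices; applying it to $f=w$ gives $\lVert w\rVert_x=c\,\lVert w\rVert_y$ in every case (whether or not $\lVert w\rVert_y=0$). As $w$ was arbitrary, $\lVert\cdot\rVert_x=c\,\lVert\cdot\rVert_y$, i.e.\ $x\sim y$, contradicting $x\neq y$.

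\emph{Main obstacle.} The compact/Hausdorff packaging and the finite-intersection argument for surjectivity are routine and formally parallel the analytic case; the only genuine content is the separation statement underlying injectivity, where the care needed is in bookkeeping the coordinates equal to $\infty$ — equivalently the kernels of the seminorms — and in remembering that homothety is detected only up to a global additive constant in $\T\PP^n$.
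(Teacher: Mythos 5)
Your proof is correct. The injectivity argument is essentially the one in the paper (first observe the kernels coincide, then that the finite values differ by a global multiplicative constant), just phrased by contradiction and slightly streamlined. The surjectivity argument, however, takes a genuinely different route: the paper constructs an explicit preimage seminorm $\lVert f\rVert := \exp(y_{\jmath,0}-y_{\jmath,1})$ for $\jmath=[e_0^\ast:f:\cdots]$ and then checks well-definedness, the seminorm axioms, and compatibility with the whole inverse system, which is a fair amount of bookkeeping involving auxiliary coordinate projections. You instead exploit the fact that each $\pi_\iota$ is already known to be continuous and surjective, so the fibres $F_\iota=\pi_\iota^{-1}(x_\iota)$ form a family of non-empty closed subsets of the compact space $\calXbar_r(K)$, and the cofilteredness of $I$ together with the compatibility relation $\pi_{\iota'}=\varphi^{\trop}\circ\pi_\iota$ gives the finite intersection property, whence $\bigcap_\iota F_\iota\neq\varnothing$. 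This is shorter and cleaner, and it is the standard abstract mechanism behind surjectivity onto an inverse limit; it does rely on having already established compactness of $\calXbar_r(K)$ and surjectivity of each $\pi_\iota$, both of which the paper does establish beforehand. What your argument does not give you, and the paper's does, is an explicit formula for the preimage seminorm in terms of the compatible family $(y_\jmath)$; this is a loss of information but not of rigor, and nothing downstream in the paper appears to use the explicit form. Both approaches are sound.
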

    
    \begin{proof}        As $\varprojlim_{\iota \in I} \Trop\big(\PP^r,\iota\big)$ is a Hausdorff space and $\calXbar_r(K)$ is compact by Proposition \ref{restriction_surjective}, it suffices to show bijectivity.
        
        We first show the \textbf{injectivity} of $\varprojlim_{\iota \in I} \pi_\iota$. Assume that $\varprojlim_{\iota \in I} \pi_{\iota}(x)=\varprojlim_{\iota \in I} \pi_{\iota}(x')$ for $x,x' \in \calXbar_r(K)$. We choose representatives $y$ and $y'$ of the homothety classes $x$ and $x'$ respectively. 
        
        \emph{Claim:} The seminorms $y$ and $y'$ have the same kernel. \\
        By symmetry, it suffices to show one inclusion. Let $0 \neq f\in (K^{r+1})^*$ with $||f||_y=0$. Then by extending $f$ to a generating set $f,f_1,\dots,f_n$ of $(K^{r+1})^{\ast}$ and looking at the corresponding embedding $\iota=[f:f_1:\ldots:f_n]: \P^r \to \P^n$, we obtain that the first coordinate of $\pi_{\iota}(x)=\pi_{\iota}(x')$ equals $-\log ||f||_y=\infty$. By assumption, also $-\log ||f||_{y'}=\infty$ and thus also $||f||_{y'}=0$.
        
        Let now $f_0,f_1 \in (K^{r+1})^{\ast}$, with $||f_0||_y\neq 0$ and $||f_1||_y\neq 0$. Then also $||f_0||_{y'}\neq 0$ and $||f_1||_{y'}\neq 0$ and we need to show that $\frac{||f_0||_y}{||f_0||_{y'}}=\frac{||f_1||_y}{||f_1||_{y'}}$, as this immediately implies that $y,y'$ are homothetic and thus $x=x'$.
        We extend $f_0,f_1$ to a generating set $f_0,f_1,\dots, f_n$ of $(K^{r+1})^{\ast}$. Let $\iota=[f_0:\ldots:f_n]$ be the corresponding embedding from $\P^r$ into $\P^n$, then $\pi_{\iota}(y)=\pi_{\iota}(y')$ and thus their difference is a multiple of $\mathbbm{1}$, in particular 
        $$-\log \frac{||f_0||_y}{||f_0||_{y'}} = -\big(\log ||f_0||_y -\log ||f_0||_{y'}\big) = -\big(\log ||f_1||_y -\log ||f_1||_{y'}\big) = -\log \frac{||f_1||_y}{||f_1||_{y'}}.$$
        
        We now show the \textbf{surjectivity} of $\varprojlim_{\iota \in I} \pi_\iota$. Let $(y_{\jmath})_{\jmath \in I}\in \varprojlim_{\jmath\in I} \Trop\big(\PP^r,\jmath\big)$. First, we consider the identity $\id=\big[e_0^{\ast}:\ldots:e_r^{\ast}\big]:\P^r\to\P^r$. After a permutation of coordinates we may assume that the first coordinate $y_{\id,0}$ of $y_{\id}\in \T\P^r$ is not $\infty$. We will construct a seminorm $||.||$ with $||e_0^{\ast}||=1$ and $\pi_{\jmath}(||.||)=y_{\jmath}$ for all $\jmath\in I$.

        \emph{Claim:} For all linear embeddings $\iota=\big[e_0^{\ast}:f_1:\cdots:f_{n}\big]$, the first coordinate $y_{\iota,0}$ is not $\infty$.\\ 
        After composing with the corresponding embedding into projective space we can assume that the codomain of $\iota$ is $\P^n$.
        We consider the linear embedding $$\big[e_0^{\ast}:\cdots:e_r^{\ast}:f_1:\ldots:f_n\big] :\P^r \to U \subset \P^{r+n}$$
        where $U=\P^{r+n}\setminus V(x_0,\dots,x_r)\cup V(x_{0},x_{r+1},\dots,x_{r+n})$. Then we have a  projection $U \to \P^r$ onto the first $r+1$ coordinates and a projection $U \to \P^{n}$ given by $ [x_0 : \ldots:x_{r+n}] \mapsto [x_0:x_{r+1}:\ldots:x_{r+n}]$. Since $(y_{\jmath})_{\jmath\in J}$ is an inverse system, this shows that the first coordinate of $y_{\iota}$ cannot be $\infty$.

        \emph{Construction of the seminorm.}\\
        Let $f \in (K^{r+1})^{\ast}$. We choose an embedding $\jmath=\big[e_0^{\ast}:f:f_2:\cdots:f_n\big]:\P^r \to \P^n$ and define 
        $$||f||:=\exp(y_{\jmath,0}-y_{\jmath,1})$$
        where we set $\exp(-\infty)=0$. Note that $y_{\jmath,0}\neq \infty$ and that this does not depend on the representative of $y_{\jmath}\in \T\P^n$.
        
        We show that this is independent of the choice of $\jmath$. Let $\jmath'=\big[e_0^{\ast}:f:f'_2:\cdots:f'_{n'}\big]$. Similarly to before, consider the embedding
        $$\big[e_0^{\ast}:f:f_2:\ldots:f_n:f'_2\cdots:f'_{n'}\big] :\P^r \to U \subseteq \P^{n+n'-1}$$
        where $U=\P^{n+n'-1}\setminus V(x_0,\dots,x_n)\cup V(x_0,x_1,x_{n+1},\dots,x_{n+n'-1})$ as before. By the same argument, applying the projections to $\P^n$ and $\P^{n'}$ shows that $y_{\jmath,0}-y_{\jmath,1}=y_{\jmath',0}-y_{\jmath',1}$. 
        
        By a permutation automorphism one can show that for any linear embedding $\iota =\big[e_0^{\ast}:\ldots:f:\ldots\big]$, where $f$ is in the $i$-th entry, we have $||f||:=\exp(y_{\jmath,0}-y_{\jmath,i})$.

        We check that the constructed map is indeed a seminorm. 
        For $f \in (K^{r+1})^{\ast}$ and $\lambda \in K$ consider any embedding $\jmath=\big[ e_0^{\ast}:f:\lambda f: \ldots \big]$. Then, by Proposition \ref{prop:tropIsProj}, for every $y_{\jmath}\in \Trop(\P^r,\jmath)$ there is a class of a seminorm $\big[||.||'\big]\in \calXbar_r(K)$ with 
        \begin{align*}
            y_{\jmath}&=\pi_{\jmath}\big(\big[||.||'\big]\big) \\
            &= \big(\trop_{\calXbar_n}\circ \calXbar(\jmath)\big)\big(\big[||.||'\big]\big)\\
            &=\big[-\log ||e_0^{\ast}||':-\log ||f||':-\log ||\lambda f||': \cdots \big]
        \end{align*}
        and thus $y_{\jmath,1}+\val(\lambda)=y_{\jmath,2}$, where $\val$ is the valuation on $K$. Therefore 
                $$||\lambda f||=\exp(y_{\jmath,0}-y_{\jmath,2})=\exp\big(y_{\jmath,0}-(y_{\jmath,1}+\val(\lambda))\big) = |\lambda|\cdot||f||.$$
        For $f,g \in (K^{r+1})^{\ast}$, the inequality $||f+g||\leq \max\{||f||,||g||\}$ follows similarly by considering an embedding containing $f,g$ and $f+g$.

        By construction, the seminorm $||.||$ is an inverse image of $(y_{\jmath})_{\jmath\in I}$: Let $\jmath=\big[f_1:\ldots:f_n\big]: \P^r \longrightarrow U \subseteq \P^{n-1}$ be a linear embedding into a torus-invariant open subset $U \subseteq \P^{n-1}$. Consider the embedding $\jmath'=\big[e_0^{\ast}:f_1:\ldots:f_n\big]: \P^r \longrightarrow V \subseteq \P^{n}$ where $V$ is the complement of the intersection of the last $n$ coordinate hyperplanes. Since the projection of $V$ onto the last $n$ coordinates is a toric morphism, we have an induced projection map $\Trop(\P^r,\jmath') \longrightarrow \Trop(\P^r,\jmath)$. The same permutation argument as before shows 
        \begin{align*}
            \big[-\log\big(||f_1||\big):\dots:-\log(||f_n||\big)\big] 
            &= \big[-\log(\exp(y_{\jmath',0}-y_{\jmath',1})):\dots:-\log(\exp(y_{\jmath',0}-y_{\jmath',n}))\big] \\
            &= \big[y_{\jmath',1}-y_{\jmath',0}:\dots:y_{\jmath',n}-y_{\jmath',0}\big]\\
            &= \big[y_{\jmath',1}:\dots:y_{\jmath',n}\big]\\
            &=\big[y_{\jmath,0}:\dots : y_{\jmath,n-1}\big]. \qedhere
        \end{align*}
    \end{proof}

\begin{remark}\label{rem_different_categories}
Instead of the category $I$ used above, several subcategories would yield the same limit:
\begin{enumerate}[(a)]
    \item The full subcategory $I'$ of $I$ of non-degenerate embeddings where no coordinate equals $0$. Then $I'$ is cofinal in $I$, and thus the respective limits of tropicalizations are naturally isomorphic. 
    \item The full subcategory of $I$ of non-degenerate embeddings which are different in every coordinate.
    \item The wide subcategory of $I$, where instead of all (linear) toric morphisms we only allow coordinate projections. Note that these morphisms are the only ones used in the proof of \ref{thm_limits}.
    
\end{enumerate}

Following the proof of Theorem \ref{thm_limits} one can similarly show that the restriction of the map $\varprojlim_{\iota \in I'} \pi_\iota$ to the non-compactified space $\calX_r(K)$ induces a homeomorphism
$$\calX_r(K)\xlongrightarrow{\sim}\varprojlim_{\iota \in I'} \Trop\big(\iota(\PP^r)\cap \G_m^{n_{\iota}}\big)=\varprojlim_{\iota \in I'} \left(\Trop\big(\PP^r,\iota\big)\cap \R^{n_{\iota}+1}/ \R \cdot \mathbbm{1}\right),$$
where $n_{\iota}$ is the dimension of projective space that is the codomain of $\iota$.
\end{remark}

\begin{remark}
In \cite{GunturkunKisisel} the authors  consider a projective limit of tropicalizations with respect to all linear re-embeddings of a fixed affine variety. They, in particular, show that this construction recovers the whole Berkovich analytification in the case of an affine smooth algebraic curve. Theorem \ref{mainthm_limits} may be thought of as a natural linear-algebraic incarnation of the authors' ideas.
\end{remark}


\section{Valuated matroids and tropical linear spaces}\label{section_valuatedmatroids}
In this section, we recall some of the basic definitions and results on valuated matroids in the sense of Dress and Wenzel \cite{DressWenzel_valuatedmatroids}, in particular how to associate a (projective) tropical linear space to a valuated matroid and to describe its local structure.

\subsection{Essentials of Matroids}
A \emph{matroid} $M$ of rank $r$ is given by an arbitrary set $E$, called the \emph{ground set}, and a set $\calI$ of subsets of $E$, called \emph{independent} sets, such that the following axioms are satisfied:
\begin{enumerate}
    \item[(I1)] The empty set is independent.
    \item[(I2)] Subsets of independent sets are independent.
    \item[(I3)] If $A,B\in \calI$ and $|A|>|B|$, then there is $a \in A$ such that $B\cup \{a\} \in \calI$.
    \item[(I4)] If $A$ is an inclusion-wise maximal independent set, then $|A| = r$. 
\end{enumerate}

Note that if the ground set $E$ is finite, then axiom (I4) follows from (I3), but if $E$ is infinite then (I4) is a necessary axiom.  
Clearly these axioms are modeled after linear independence of a set of vectors whose span is $r$-dimensional; e.g.~we get a matroid by considering the linearly independent subsets of a subset $E$ of a vector space $K^n$.
The \emph{rank} of $A \subset E$ is the cardinality of a maximal independent in $A$.
The family $\mathscr{B}(M)$ of inclusion-wise maximal sets of $\calI$ are called the \emph{bases} of $M$, and by axiom (I2) they determine $\calI$. 
A \emph{circuit} $C\subset E$ is a minimal dependent set, and a \emph{flat} is a subset $F\subset E$ such that $|C\setminus F| \neq 1$ for all circuits $C$. A set is a flat if and only if adding any other element to it increases its rank. A \emph{loop} is an element in $E$ that is contained in no basis; equivalently the singleton with only this element is dependent. 

\subsection{Valuated matroids}
We begin with the following definition due to Dress and Wenzel \cite{DressWenzel_valuatedmatroids}.

\begin{definition}
A \emph{valuated matroid} of rank $r$ on a ground set $E$ is a function
\begin{equation*}
v\colon {E\choose r} \longrightarrow \Rbar
\end{equation*}
that fulfils the following axioms:
\begin{enumerate}[(i)]
\item There exists a subset $A\in {E\choose r}$ with $v(A)\neq \infty$. 

\item For all $A,B\in {E\choose r}$ and $a\in A-B$ we have the \emph{valuated basis exchange property}
\begin{equation*}
v(A)+v(B)\geq \min_{b\in B}\big\{v( b \cup A \setminus a)+v( a \cup B \setminus b)\big\} \ .
\end{equation*}
\end{enumerate}
\end{definition}

The elements $A\in {E\choose r}$ with $v(A)<\infty$ form the set of bases of a matroid, called the \emph{underlying matroid} of $v$. We explicitly do not require the underlying set $E$ to be finite.

\begin{remark}
 Valuated matroids on $E$ of rank $r$ are parameterized by the \emph{Dressian}~$\operatorname{Dr}(E, r) \subset  \Rbar^{{E\choose r}}$.
 This is the tropical variety of points $v \in \Rbar^{{E\choose r}}$ such that for all $\tau \in{E\choose r+1}$ and all $\sigma \in {E\choose r-1}$ the expression 
 \begin{align}
 \label{eq:EqsCutOutDressian}
     \min_{j\in\tau}\big( v(\tau\setminus j) +  v(\sigma\cup j) \big)
 \end{align} 
 attains the minimum at least twice. 
 It is straightforward to verify that the basis exchange axiom of valuated matroids is equivalent to the minimum in Equation~\eqref{eq:EqsCutOutDressian} being attained at least twice for all $\tau \in {E \choose r+1}$ and all $\sigma \in {E\choose r-1}$.
If $v$ is a point in the interior $ \operatorname{Dr}(E, r)^\circ =  \operatorname{Dr}(E, r) \cap  \RR^{{E\choose r}}$, then the underlying matroid of $v$ is the \emph{uniform matroid on $E$ of rank $r$}, i.e.~the bases are all  subsets of cardinality $r$. 
 Valuated matroids with different underlying matroids lie at the boundary. 
 That is, if $M$ is a matroid on $E$ of rank $r$, we obtain the \emph{Dressian with underlying matroid $M$} by intersecting with hyperplanes at infinity:
 \[\operatorname{Dr}(M)= \operatorname{Dr}(E,r) \cap \bigcap_{\substack{\sigma \in {E \choose r} \\ \sigma \text{ not a base} }} \big\{ v \in \Rbar^{E\choose r} \ \big\vert\  v(\sigma) = \infty \big\}. \]

\end{remark}

\begin{example}[Realizable valuated 
matroids]\label{realizableValuatedMatroids}
Let $K$ be a field with a non-Archimedean valuation $\val: K \rightarrow \Rbar$.
\begin{enumerate}[(a)]
    \item Let $\{f_0,\ldots,f_n\}$ be a generating subset of $K^{r+1}$. The map
\begin{align*}v\colon{[n]\choose r+1}&\longrightarrow\Rbar\\
A = \{a_0,\ldots,a_r\}&\longmapsto \val\Big(\det\begin{bmatrix}f_{a_0}&\cdots&f_{a_r}\end{bmatrix}\Big)\end{align*}
defines a valuated matroid or rank $r+1$. This follows from the \emph{Grassmann-Plücker} identity:
    \begin{equation*}
        \det(v_0,\ldots,v_r) \cdot \det(w_0,\ldots,w_r) = \sum_{i=0}^n \det(v_0,\ldots,v_{i-1},w_0,v_{i+1},\ldots,v_r) \cdot \det(v_i,w_0,\ldots,w_r)
    \end{equation*}
for all $v_0,\ldots,v_r,w_0,\ldots,w_r \in K^{r+1}$. All valuated matroids of this form are called \emph{realizable}.
\item Extending on (a), the map
    \begin{equation*}
w_{\univ}\colon{K^{r+1} \choose r+1}\longrightarrow \Rbar
\end{equation*}
induced by the permutation-invariant map $\val\circ\det\colon K^{(r+1)\times (r+1)}\rightarrow \Rbar$ is a valuated matroid, called the \emph{universal realizable matroid}.
    \item If the valuation on $K$ is trivial, then in (a) we have 
    $$v(A)=\begin{cases}
    0 & \text{if $A$ is a basis,}\\
    \infty & \textrm{if $A$ is dependent}.
    \end{cases}$$
    Thus, the notion of an ordinary matroid that is realizable over a fix trivially valued field corresponds exactly to that of a valuated matroid that is realizable over the same trivially valued field.
\end{enumerate}
\end{example}

\subsection{Tropical linear spaces and matroid polytopes}
\label{subsection_matroid_polytopes}
Valuated matroids $v$ with ground set $E$ and rank~$r+1$ have an associated polyhedral complex $\calL(v)$ of pure dimension~$r$ in $ \TT\PP^{E}$ that is connected through codimension~1 and which satisfies intersection-theoretic properties analogous to linear spaces. 
Hence the $\calL(v)$ are called \emph{tropical linear spaces}.
We recall the definition, some properties, the associated Matroid polytope, and a regular subdivision induced by $v$ that is dual to $\calL(v)$.
When the matroid is realizable, this recovers the coordinate-wise tropicalization of any linear subspace associated to~$v$.

For the following let $E = \{0,\ldots,n\}$ be the (finite) ground set of a valuated matroid $v$ of rank $r+1$ with underlying matroid $M$. 
\begin{definition}\label{definition_tropicallinearspace}
The \emph{tropical linear space} $\calL(v)\subset\TT\PP^{n}$ associated to $v$ is the locus of those $(u_e)_{e \in E} \in \T\P^{n}$ such that for any  $\tau \in{E \choose r+2}$ the minimum in $v(\tau \setminus e\}) + u_e$ is attained at least twice.
\end{definition}
If some $f \in E$ is a loop, by taking $\tau=B\cup \{f\}$ for any basis $B$, one can see that $u_f=\infty$ for each $(u_e)_{e\in E} \in \calL(v)$. 
Thus, adding or deleting loops only yields homeomorphic associated tropical linear spaces. Consequently, for simplicity, we now assume that $M$ has no loops.

Now, we give a characterization of matroids in terms of polytopes. We use the following notation for the indicator vector in $\RR^E$ of a set $A \subset E$:
\[e_A=\sum_{i\in A}e_i\in \R^E.\] 
\begin{definition}    
The \emph{matroid polytope} $P_M$ of a matroid $M$ is the convex hull of 
\begin{equation*}\big\{e_B \ \big\vert\ B \in \mathscr{B}(M)\big\} \subseteq \R^{n+1} .
\end{equation*}
\end{definition}

The valuated matroid $v$ can be regarded as a height function on the vertices of the polytope $P_M$ giving rise to the lifted polytope 
$\Gamma(v)$, which is defined to be the convex hull of 
\begin{equation*}\big\{(e_{B}, v(B)) \in \R^{n+2}\mid B \in \mathscr{B}(M)\big\} \ .
\end{equation*}
Projecting the lower facets of $\Gamma(v)$ back to $\R^{n+1}$ induces a polytopal subdivision $\mathcal{D}_v$ of $P_M$, called the \emph{regular subdivision} induced by $v$. By \cite[Proposition 2.2]{YuYuster2006} a real-valued function from the vertices of $P_M$ is a 
valuated matroid if and only if all the faces of the induced regular subdivision are matroid polytopes.

A vector $u \in \R^{n+1}$ selects a face of the regular subdivision induced by $v$ by taking the convex hull of all vertices $e_B$ of $P_M$ such that $v(B) - u \cdot e_B$ is minimized. 
Such a face corresponds to the polytope of a matroid, the so-called \emph{inital matroid} $M_u$ of $M$ at $u$. 
For a loopless valuated matroid $v$ we have that: 

\begin{proposition}[{\cite{FinkOlarte} or \cite[Proposition 2.3]{Speyer}}]
    The interior of the tropical linear space $\calL(v)$, i.e.~the points with finite coordinates and satysfying Equation~\eqref{eq:EqsCutOutDressian}, equals the set 
    \[\calL(v)^{\circ} = \{u \in \R^{n+1} \mid M_u \text{ has no loops} \}.\]
    The closure operation only adds points with infinite coordinates.
\end{proposition}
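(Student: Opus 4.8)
The plan is to prove the two assertions separately: first, that a finite-coordinate point of $\calL(v)$ is precisely one whose initial matroid $M_u$ is loopless; and second, that $\calL(v)$ is the closure of this finite locus, the additional points being exactly those with some coordinate equal to $\infty$. The first assertion can be extracted by hand from the valuated basis exchange axiom; the second is really a statement about the polyhedral structure of $\calL(v)$ and its boundary strata.

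For the first assertion I would reduce to $u=0$. For any $u\in\R^{n+1}$ the function $w$ with $w(B)=v(B)-u\cdot e_B$ on ${E\choose r+1}$ is again a valuated matroid — the $u$-contribution cancels in the exchange inequality since $e_{b\cup A\setminus a}+e_{a\cup B\setminus b}=e_A+e_B$ — one has $\calL(w)=\calL(v)-u$, and the initial matroid of $w$ at $0$ equals $M_u$. Subtracting the constant $\min_B w(B)$, we may further assume $w\geq 0$ and $B\in\mathscr{B}(M_0)\iff w(B)=0$, writing $M_0:=M_u$; it then suffices to show that $0\in\calL(w)$ if and only if $M_0$ is loopless. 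If $M_0$ has a loop $f$, pick $B_0\in\mathscr{B}(M_0)$ and set $\tau:=B_0\cup\{f\}\in{E\choose r+2}$: the index $e=f$ contributes $w(B_0)=0$, while every $e\in B_0$ contributes $w(B_0-e+f)$, which is $+\infty$ when $B_0-e+f$ is dependent and strictly positive otherwise (then $B_0-e+f$ is a basis containing the loop $f$); so the minimum over $\tau$ is attained only at $f$ and $0\notin\calL(w)$.

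Conversely, suppose $M_0$ is loopless, fix $\tau\in{E\choose r+2}$, and put $m:=\min_{e\in\tau}w(\tau\setminus e)$. If $m=\infty$ the minimum is attained at all $r+2$ indices; otherwise say $m$ is attained at $e^{\ast}$ and, for contradiction, only there. Then $B^{\ast}:=\tau\setminus e^{\ast}$ is a basis with $w(B^{\ast})=m$, and $w(\tau\setminus e)>m$ for all $e\in\tau\setminus\{e^{\ast}\}$. Since $M_0$ is loopless, $e^{\ast}$ lies in some $B_1\in\mathscr{B}(M_0)$, so $w(B_1)=0$. Applying valuated basis exchange with $A=B_1$, $B=B^{\ast}$, $a=e^{\ast}\in A\setminus B$, and using $(B^{\ast}\setminus b)\cup e^{\ast}=\tau\setminus b$ and $w\geq 0$, we obtain
\begin{equation*}
m=w(B_1)+w(B^{\ast})\ \geq\ \min_{b\in B^{\ast}}\Big(w\big((B_1\setminus e^{\ast})\cup b\big)+w(\tau\setminus b)\Big)\ \geq\ \min_{b\in B^{\ast}}w(\tau\setminus b)\ >\ m,
\end{equation*}
the last inequality because every $b\in B^{\ast}$ is distinct from $e^{\ast}$. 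This contradiction shows the minimum over $\tau$ is attained at least twice, so $0\in\calL(w)$, completing the first assertion. The only real content lies in the choice of exchange: swapping the zero-weight basis $B_1$ through the supposed unique minimizer $e^{\ast}$ against the witness basis $B^{\ast}=\tau\setminus e^{\ast}$.

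For the second assertion, note first that $\calL(v)$ is closed in $\T\P^n$ — each condition ``the minimum is attained at least twice'' among finitely many continuous functions is a finite union of closed sets — so $\overline{\calL(v)^{\circ}}\subseteq\calL(v)$, and the points of $\calL(v)\setminus\calL(v)^{\circ}$ are by definition those with some infinite coordinate. For the reverse density I would use the standard description of the boundary strata of $\calL(v)$: for $u\in\calL(v)$ with $\infty$-locus $S$, the stratum of $\calL(v)$ through $u$ is obtained from the finite locus of $\calL(v/S)$ — for $v/S$ the contracted valuated matroid on $E\setminus S$ — by appending $\infty$'s on $S$; applying the first assertion to $v/S$ and deforming the $S$-coordinates to $+\infty$ along a ray adapted to this identification exhibits $u$ as a limit of points of $\calL(v)^{\circ}$. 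I expect this boundary analysis — identifying the strata with contraction minors and picking the deformation correctly (the naive choice sending all $S$-coordinates to a common large value fails, since it activates the conditions indexed by $\tau\subseteq S$ which are vacuous for $u$) — to be the main obstacle; it is precisely what \cite{FinkOlarte} and \cite[Proposition 2.3]{Speyer} carry out.
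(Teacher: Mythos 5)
The paper states this proposition with references to Fink--Olarte and Speyer but supplies no proof of its own, so there is nothing internal to compare against; the review can only check your argument on its own merits.

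Your proof of the first assertion is correct. The reduction to $u=0$ via $w(B)=v(B)-u\cdot e_B$ (followed by subtracting $\min_B w(B)$) is legitimate: the $u$-contributions cancel in the exchange inequality, and $v(\tau\setminus e)+u_e$ differs from $w(\tau\setminus e)$ by the $e$-independent constant $u\cdot e_\tau$, so where the minimum is attained is unchanged. The loop direction is clean: for $\tau=B_0\cup\{f\}$ the only index with $w(\tau\setminus e)=0$ is $e=f$, since every $(B_0\setminus e)\cup f$ either is dependent in $M$ (giving $\infty$) or is a basis of $M$ containing the $M_0$-loop $f$ (hence not in $\mathscr{B}(M_0)$, giving a strictly positive value). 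The converse via exchange with $A=B_1$, $B=B^{\ast}=\tau\setminus e^{\ast}$, $a=e^{\ast}$ is also right: the identity $(B^{\ast}\setminus b)\cup e^{\ast}=\tau\setminus b$ and $w\geq 0$ give
\[
m=w(B_1)+w(B^{\ast})\geq \min_{b\in B^{\ast}}w(\tau\setminus b)>m,
\]
a contradiction. One could remark, as you do in effect, that the only nontrivial input is the existence of some $B_1\in\mathscr{B}(M_0)$ containing $e^{\ast}$, which is exactly looplessness.

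On the second assertion: as stated in the paper, ``the closure operation only adds points with infinite coordinates'' is, once the first assertion is in hand, an immediate consequence of $\calL(v)$ being closed in $\T\P^n$ and of $\calL(v)^{\circ}$ being the finite-coordinate part of $\calL(v)$ --- a point you note. The stronger fact that $\overline{\calL(v)^{\circ}}=\calL(v)$ (which the surrounding text does use) is only sketched in your proposal, and you correctly flag that this is where the cited references do actual work. A small caution there: the identification of the stratum with infinity locus $S$ as coming from a \emph{contraction} $v/S$ is the right kind of statement, but its precise form (which minor, and which normalization) needs care --- in particular $S$ is constrained (e.g.\ by Lemma~\ref{le:BasisHasOneFiniteCoord} it meets the complement of every basis), and the rank of $v/S$ drops by $\operatorname{rank}(S)$ --- so the deformation argument should be phrased relative to that structure. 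Since you defer this step to the references and do not claim it as proved, there is no gap in what you actually assert.
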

The condition of $M_u$ not having loops is related to the minimum being achieved twice in Equation~\eqref{eq:EqsCutOutDressian}. The set $\calL(v)^{\circ}$ has a natural polyhedral structure labelled by the initial matroids, where a cell consists of all the points in $\calL(v)$ whose associated initial matroid is constant, a given $M_u$.
By taking the closure, this polyhedral structure extends to $\calL(v)$.

\subsection{Tropicalized linear spaces}\label{subsection_trop_linear_spaces}

Let $\iota=\big[f_0:\cdots:f_n\big] \colon\P^r\hookrightarrow \P^n$ with $f_0,\dots,f_n \in (K^{r+1})^*$ be a linear embedding. 
Let $v$ be the valuated matroid of rank $r+1$ on $E = \{0, \ldots, n\}$ associated to the $f_0,\dots,f_n$,
as in Example \ref{realizableValuatedMatroids}. 
Note that another representative $f_0',\dots,f_n'$ of $ \big[f_0:\cdots:f_n\big] $ is related by multiplying with a scalar $\lambda\neq 0$. Hence its associated valuated matroid $v'$ satisfies that $v' = v + (r+1) \cdot \val \lambda$.
So both $v$ and $v'$ define the same tropical linear space $\calL(v)$ and the same underlying matroid $M$. 
Moreover, we have:

\begin{theorem}[{\cite[Proposition 4.2]{Speyer}},{\cite[Theorem B]{tropicalflagvarieties}} ]\label{thm:TropIsTrop}
The tropical linear space associated with a realizable valuated matroid coincides with the tropicalization of the corresponding linear embedding:
\begin{equation*}
 \calL(v)=\Trop(\P^r,\iota).
\end{equation*}
\end{theorem}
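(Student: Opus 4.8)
The plan is to prove $\calL(v)=\Trop(\P^r,\iota)$ by establishing the two inclusions separately, and I expect $\calL(v)\subseteq\Trop(\P^r,\iota)$ to be the hard one. This is a classical statement (\cite[Proposition 4.2]{Speyer}, \cite[Theorem B]{tropicalflagvarieties}, \cite[Chapter 4]{MaclaganSturmfels}), so one option is simply to cite it; I will sketch a self-contained argument. Throughout I fix the representative $f_0,\dots,f_n$, arrange these covectors as the columns of an $(r+1)\times(n+1)$ matrix $F$ of rank $r+1$ (so $v(A)=\val\det F_A$, by Example \ref{realizableValuatedMatroids}), and use that $\iota(\P^r)\subseteq\P^n$ is the projectivization of the row space of $F$; by Proposition \ref{prop:tropIsProj} I will compute $\Trop(\P^r,\iota)$ as the closure of the tuples $\bigl(\val(t_0),\dots,\val(t_n)\bigr)$ for $[t_0:\cdots:t_n]\in\iota(\P^r)(L)$, with $L/K$ an algebraically closed non-Archimedean extension.

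For $\Trop(\P^r,\iota)\subseteq\calL(v)$: given $[t]\in\iota(\P^r)(L)$ and $\tau\in\binom{E}{r+2}$, I would note that $t$ lies in the row space of $F$, so the $(r+2)\times(r+2)$ matrix with first row $t_{\tau}$ and remaining rows $F_{\tau}$ is singular; expanding its determinant along the first row yields a Grassmann--Pl\"ucker-type relation $\sum_{e\in\tau}\pm\,t_e\det F_{\tau\setminus e}=0$. Then the elementary non-Archimedean fact that a vanishing sum attains its minimal summand-valuation at least twice (convention $\val 0=\infty$) shows that $\min_{e\in\tau}\bigl(\val(t_e)+v(\tau\setminus e)\bigr)$ is attained at least twice, i.e.\ $\bigl(\val(t_e)\bigr)_e\in\calL(v)$ by Definition \ref{definition_tropicallinearspace}. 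Since $\calL(v)$ is closed, passing to closures finishes this inclusion.

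For $\calL(v)\subseteq\Trop(\P^r,\iota)$: first I would reduce to $\calL(v)^{\circ}\subseteq\Trop(\P^r,\iota)$, using that $\calL(v)$ is the closure of $\calL(v)^{\circ}$ (the description recalled above, after \cite{FinkOlarte}) and that $\Trop(\P^r,\iota)$ is closed. Fix $u\in\calL(v)^{\circ}$, and enlarge $L$ if necessary to an algebraically closed extension with divisible value group containing all the $u_e$ (harmless, since this does not change $\Trop(\P^r,\iota)$). As $u\in\calL(v)^{\circ}$, the initial matroid $M_u$ is loopless; I would pick a basis $B=\{b_0,\dots,b_r\}$ of $M_u$ and then replace $F$ by $(F_B)^{-1}F$ — this alters $v$ only by an additive constant and leaves the row space, hence $\calL(v)$, $M_u$ and $\Trop(\P^r,\iota)$, unchanged — so that $F_B$ is the identity matrix. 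Now $v(B)=0$, and by Cramer's rule $v\bigl((B\setminus b_i)\cup e\bigr)=\val(\pm f_{e,i})$ for $e\notin B$, where $f_{e,i}$ is the $i$-th coordinate of $f_e$. From "$B$ is a basis of $M_u$", i.e.\ $v(B)-u\cdot e_B=\min_{B'}\bigl(v(B')-u\cdot e_{B'}\bigr)$, one gets $u_{b_i}+v\bigl((B\setminus b_i)\cup e\bigr)\ge u_e$ for all $i$ and all $e\notin B$; combining this and $v(B)=0$ with the "attained at least twice" condition for $u\in\calL(v)$ at $\tau=B\cup\{e\}$ then forces
\begin{equation*}
\min_{i}\bigl(u_{b_i}+v((B\setminus b_i)\cup e)\bigr)=u_e\qquad\text{for all }e\notin B.
\end{equation*}
Finally I would choose $s=(s_0,\dots,s_r)\in L^{r+1}$ with $\val(s_i)=u_{b_i}$ and with the leading terms of the $s_i$ generic, so that no cancellation of leading terms occurs in any of the finitely many expressions $t_e:=\sum_i s_i f_{e,i}$; then $t=sF$ satisfies $\val(t_{b_i})=u_{b_i}$ and $\val(t_e)=\min_i\bigl(u_{b_i}+\val(f_{e,i})\bigr)=u_e$, so $[t]\in\iota(\P^r)(L)$ with $\trop_{\P^n}[t]=u$, whence $u\in\Trop(\P^r,\iota)$.

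The first inclusion and the reductions I expect to be routine; the crux is the explicit lift in the second inclusion. Within it the one genuine subtlety is the genericity argument ruling out cancellation in $t_e=\sum_i s_i f_{e,i}$ — here I rely on the residue field of $L$ being infinite (as $L$ is algebraically closed) and on $n$ being finite — together with the bookkeeping tying the normalization $v(B)=0$ to a chosen representative of $u\in\T\P^n$. If this becomes delicate, the alternative is to identify $\calL(v)$ with the tropical variety cut out by the tropicalizations of the circuit linear forms of the ideal of $\iota(\P^r)$ and invoke that these circuit forms constitute a tropical basis, as in \cite[Chapter 4]{MaclaganSturmfels}.
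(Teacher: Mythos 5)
The paper does not prove Theorem~\ref{thm:TropIsTrop}: the bracketed citations to Speyer and to the tropical flag varieties paper serve as the entire justification, and no proof environment follows. So there is no "paper's own proof" to compare against, and the relevant question is simply whether your sketch is sound. It is. The inclusion $\Trop(\P^r,\iota)\subseteq\calL(v)$ via Laplace expansion of the singular $(r+2)\times(r+2)$ matrix and the non-Archimedean "minimum attained twice" rule is the standard argument. For the reverse inclusion, your reduction to $\calL(v)^\circ$, the normalization $F_B=\mathrm{Id}$ (which changes $v$ only by an additive constant, leaves the row space and hence $\Trop(\P^r,\iota)$ fixed, and leaves $M_u$ fixed), the identification $\val(f_{e,i})=v\big((B\setminus b_i)\cup e\big)$ via Lemma~\ref{lem:Cramer}, and the deduction $u_e=\min_i\big(u_{b_i}+v((B\setminus b_i)\cup e)\big)$ from "$B$ is a basis of $M_u$" plus the circuit condition on $\tau=B\cup\{e\}$ are all correct. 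The final lift with generic leading residues works: the condition on the residues $(c_0,\dots,c_r)$ of the $s_i$ avoiding, for each of the finitely many $e\notin B$, a single non-trivial linear form over the (infinite) residue field, is satisfiable; and passing to an algebraically closed $L$ with value group containing all $u_e$ does not change $\Trop(\P^r,\iota)$. This is essentially the argument of Speyer (Prop.~4.2), so your proof is both correct and in the same spirit as the cited source; the alternative you name in your last paragraph, via the circuit forms constituting a tropical basis as in Maclagan--Sturmfels, is the other standard route.

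One small point of bookkeeping worth making explicit if you write this up fully: after setting $v(B)=0$, the computation $\min_i\big(u_{b_i}+v((B\setminus b_i)\cup e)\big)=u_e$ holds for a fixed \emph{affine} representative $u\in\R^{n+1}$ of the point in $\T\P^n$, and the lift $t=sF$ you produce has $\trop_{\P^n}[t]$ equal to that representative on the nose, not merely up to $\R\cdot\allone$; so the statement lands in $\T\P^n$ cleanly once you quotient. You gesture at this with "bookkeeping tying the normalization $v(B)=0$ to a chosen representative of $u$", which is the right concern; it resolves without difficulty but should be stated.
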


Without loss of generality, we may assume that $f_i \neq 0$ for all $i \in \{0,\dots,n\}$. Then $\Trop(\P^r,\iota)$ equals the closure in $\T\P^n$ of the non-compactified tropicalization
\begin{align*}
     \Trop(\P^r,\iota) \cap \R^{n+1} / \R \mathbbm{1} = \Trop\big(\iota(\P^r) \cap \G_m^n\big).
\end{align*}
Again, we can get an initial matroid $M_u$ from $u = [u_0:\cdots:u_n] \in \Trop(\iota(\P^r)\cap\G_m^n)$ by considering the matroid of rank $r+1$ on $E$ whose bases are the bases $B = \{b_0,\ldots,b_r\}$ of $M$ such that $v(B) - u_{b_0} - \ldots - u_{b_r}$ is minimal. This definition is independent of the choice of representative of $u$. 

In \cite{Rincon_localTropicalLinearSpaces} the author defines the notion of a \emph{local tropical linear space} which can be extended to our compact setting, i.e.~for tropical linear spaces in $\T\P^n$. 

\begin{definition}
\label{def:local_tropical_linear_space}
Let $B = \{b_0,\dots,b_r\}$ be a basis of $M$. The \textit{local tropical linear space}  $\Trop(\P^r,\iota)_{B} \subset \T\P^n$ is defined as the closure of the set of vectors $u \in \Trop(\iota(\P^r)\cap\G_m^n)$ such that $M_u$ contains the basis $B$.
\end{definition}

The tropical linear space $\Trop(\P^r,\iota)$ is the union of all its local tropical linear spaces $\Trop(\P^r,\iota)_{B}$.

\begin{remark} In terms of polyhedral subdivisions, the (open part) of the local tropical linear space  $\Trop(\P^r,\iota)_{B} \cap \R^{n+1} / \R \mathbbm{1}$ is a polyhedral complex dual to the faces of the regular subdivision $\mathcal{D}_v$ that contain the vertex $e_B$. For details, we refer the reader to \cite[Corollary~2.5]{Rincon_localTropicalLinearSpaces}.
\end{remark}

\subsection{The trivial valuation case}\label{section_trivvalmatroid=matroid}
Throughout this subsection, we assume that $v$ is trivially valued, i.e.~for all $A\in {E\choose r+1}$, we have that $v(A)=0$ if and only if $A$ is a basis. 
This way we may identify $v$ with its underlying matroid $M$ and the subdivision $\mathcal D_v$ of $P_M$ from Subsection~\ref{subsection_matroid_polytopes} is trivial. So the polyhedral complex of $\calL(v)$ is a fan, known as the \emph{Bergman fan} of $M$ (cf. \cite[Section 4.2]{MaclaganSturmfels} ).

The following theorem refines the polyhedral structure of the Bergman fan, by realizing its support as the order complex associated to the poset of flats of $M$.

\begin{theorem}[{\cite[Theorem 4.2.6]{MaclaganSturmfels}}]\label{theorem_trop_trivial}
    Let $M$ be a loopless matroid. The cones $\langle e_{F_1},\dots,e_{F_l} \rangle_{\R_{\geq 0}} + \R\mathbbm{1}$ in $\R^{n+1}/\mathbbm{1}$ for every chain of flats $\emptyset \subseteq F_1 \subseteq \dots \subseteq F_l$ form a fan with support $\calL(M)$. In particular, $\calL(M)\cap \R^{n+1}/\mathbbm{1} $ is homeomorphic to the cone over the order complex of the lattice of flats of $M$.
\end{theorem}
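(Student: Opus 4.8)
\emph{Proof plan.} The plan is to prove this directly and combinatorially, so that the argument applies to every loopless matroid $M$ and not merely to those realized by a linear embedding as in Theorem~\ref{thm:TropIsTrop}. The proof splits into three steps: first, unwind Definition~\ref{definition_tropicallinearspace} in the trivially valued case into a \emph{circuit criterion} for a vector to lie in $\calL(M)$; second, reformulate that criterion as the statement that every superlevel set of the vector is a flat of $M$; third, recognise this reformulation as describing precisely the union of the cones $\sigma_{\mathcal F}=\langle e_{F_1},\dots,e_{F_l}\rangle_{\R_{\geq 0}}+\R\mathbbm{1}$ attached to chains of flats $\mathcal F\colon F_1\subsetneq\dots\subsetneq F_l$, and then verify the fan axioms.

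\emph{Step 1 (circuit criterion).} Since a trivially valued $v$ takes only the values $0$ and $\infty$, the minimum in Definition~\ref{definition_tropicallinearspace} attached to $\tau\in\binom{E}{r+2}$ runs over $\{u_e: e\in\tau,\ \tau\setminus e\in\mathscr{B}(M)\}$. If $\rank_M(\tau)\leq r$ this set is empty and $\tau$ imposes no condition; if $\rank_M(\tau)=r+1$ then $\tau$ has nullity one, so it contains a unique circuit $C$, and $\tau\setminus e$ is a basis exactly when $e\in C$. Conversely every circuit $C$ of $M$ (necessarily $|C|\leq r+2$) is the unique circuit of such a $\tau$: extend $C\setminus\{c\}$ to a basis $B$ and set $\tau=B\cup\{c\}$. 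Hence $u\in\calL(M)\cap(\R^{n+1}/\mathbbm{1})$ if and only if, for every circuit $C$ of $M$, the minimum $\min_{c\in C}u_c$ is attained at least twice. (One could instead start from the description $\calL(v)^{\circ}=\{u:M_u\text{ loopless}\}$ recalled in Subsection~\ref{subsection_matroid_polytopes}, as $M_u$ being loopless is equivalent to the same circuit criterion.)

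\emph{Step 2 (circuits versus flats).} The claim is that, for $u\in\R^E$, the circuit criterion holds if and only if every superlevel set $S_t:=\{i\in E: u_i\geq t\}$ is a flat of $M$. If some $S_t$ fails to be a flat, pick $e\in\operatorname{cl}(S_t)\setminus S_t$ and a basis $B$ of $S_t$; then $B\cup\{e\}$ is dependent, its fundamental circuit $C$ contains $e$ and satisfies $C\setminus\{e\}\subseteq B\subseteq S_t$, so $u_e<t\leq u_c$ for all $c\in C\setminus\{e\}$ and the minimum over $C$ is attained only at $e$ (looplessness rules out $C=\{e\}$). Conversely, if a circuit $C$ has its minimum attained only at $c_0$, choose $t$ with $u_{c_0}<t\leq\min_{c\in C\setminus\{c_0\}}u_c$; then $C\setminus\{c_0\}\subseteq S_t$ while $c_0\in\operatorname{cl}(C\setminus\{c_0\})\subseteq\operatorname{cl}(S_t)$, so $S_t$ is not closed. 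I expect this step to be the main obstacle: it is the only place genuine matroid theory enters, and it hinges on the interaction between circuits, fundamental circuits, and the closure operator --- standard, but the heart of the matter.

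\emph{Step 3 (assembling the fan).} For a chain of flats $\mathcal F\colon F_1\subsetneq\dots\subsetneq F_l$ (we may take all $F_j$ proper and nonempty, since $e_\emptyset=0$ and $e_E\in\R\mathbbm{1}$) and $u=\sum_j\lambda_j e_{F_j}+\mu\mathbbm{1}$ with $\lambda_j>0$, the distinct superlevel sets of $u$ are exactly $\emptyset,F_1,\dots,F_l,E$, all flats; conversely, writing the distinct nonempty superlevel sets of an arbitrary $u$ as $F_1\subsetneq\dots\subsetneq F_{m-1}\subsetneq F_m=E$ with thresholds $c_1>\dots>c_m$ and setting $\mu=c_m$, $\lambda_j=c_j-c_{j+1}$ places $u$ in $\sigma_{\{F_1,\dots,F_{m-1}\}}$. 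Combining with Steps 1 and 2 yields $\calL(M)\cap(\R^{n+1}/\mathbbm{1})=\bigcup_{\mathcal F}\sigma_{\mathcal F}$, a finite union over chains of flats. These cones form a fan: modulo $\mathbbm{1}$ each $\sigma_{\mathcal F}$ is simplicial, because the vectors $e_{F_1},e_{F_2}-e_{F_1},\dots,e_{F_l}-e_{F_{l-1}},\mathbbm{1}-e_{F_l}$ have nonempty pairwise disjoint supports; the faces of $\sigma_{\mathcal F}$ are the $\sigma_{\mathcal F'}$ for subchains $\mathcal F'\subseteq\mathcal F$; and $\sigma_{\mathcal F}\cap\sigma_{\mathcal G}=\sigma_{\mathcal F\cap\mathcal G}$ because any point of the intersection has all its nonempty proper superlevel sets in both $\mathcal F$ and $\mathcal G$. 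As the cones are indexed exactly by the chains in the poset of proper nonempty flats, the underlying space is the cone over the order complex of the lattice of flats of $M$, which is the final assertion; passing to the closure in $\T\P^n$ adds only points with infinite coordinates (cf.\ the proposition of Fink--Olarte and Speyer in Subsection~\ref{subsection_matroid_polytopes}) and recovers $\calL(M)$ itself.
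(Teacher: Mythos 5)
Your proof is correct and complete. Note that the paper itself does not prove this statement -- it is quoted directly from Maclagan--Sturmfels \cite[Theorem 4.2.6]{MaclaganSturmfels} -- so there is no in-paper argument to compare against; your three-step route (translate Definition~\ref{definition_tropicallinearspace} into the \emph{circuit criterion} ``$\min_{c\in C}u_c$ attained at least twice for every circuit $C$,'' translate that into ``every superlevel set is a flat,'' then assemble the cones $\sigma_{\mathcal{F}}$ and verify the fan axioms) is exactly the standard argument underlying the cited reference. The two places a careless proof would go wrong are handled correctly: (i) in Step~1 you observe that an $(r+2)$-set $\tau$ imposes a nontrivial condition only when $\rank(\tau)=r+1$, in which case $\tau$ contains a unique circuit $C$ and $\tau\setminus e\in\mathscr{B}(M)$ precisely for $e\in C$, and conversely every circuit arises as the fundamental circuit of some such $\tau$; and (ii) in Step~2 you invoke looplessness exactly where it is needed, namely to rule out singleton circuits so that the fundamental circuit $C$ of $e\in\operatorname{cl}(S_t)\setminus S_t$ genuinely witnesses a violated min-twice condition, and so that $C\setminus\{c_0\}$ is nonempty in the converse direction. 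The simpliciality and face/intersection checks in Step~3 are routine but correctly carried out, using that the vectors $e_{F_1},e_{F_2}-e_{F_1},\dots,\mathbbm{1}-e_{F_l}$ have disjoint supports and that the superlevel-set chain of a point in $\sigma_{\mathcal{F}}\cap\sigma_{\mathcal{G}}$ must lie in $\mathcal{F}\cap\mathcal{G}$.
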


With this polyhedral structure we can describe the boundary at infinity of the cones, and see that it differs from an usual coordinate-wise compactification at infinity. 
Namely, for any cone $\sigma$ given by a chain of flats $\emptyset \subseteq F_1 \subseteq \dots \subseteq F_l$, the closure in $\T\P^n$ is given by $\langle e_{F_1},\dots,e_{F_l} \rangle_{\Rbar_{\geq 0}} + \R\mathbbm{1}$. 
In particular, if the $i$-th coordinate of a point in $\sigma$ is infinite, then we consider the minimal $j$ such that $i \in F_j$, and we have that the $i'$-th coordinate is also infinity for any $i' \in F_j$. 
Thus, there is a single maximal stratum 
in the boundary of $\calL(v)$ for each cone, which can alternatively be explained by the fact that the cone structure above triangulates the Bergman fan of $M$ (see  \cite[Section 4.2]{MaclaganSturmfels}).

\begin{example}
We consider the embedding $\P^2\xrightarrow{\id}\P^2$. Then the associated matroid has as ground set $\{e_0^{\ast},e_1^{\ast},e_2^{\ast}\}$, and every subset is independent, i.e.~we have the uniform matroid $U_{3,3}$. The Bergman fan of $U_{3,3}$ consists of a single cone $\R^2$. However, there are $6$ nontrivial flats, namely all three one-dimensional subspaces and three two-dimensional subspaces. In Figure~\ref{figure_P^2} we labelled all one-dimensional cones corresponding to non-trivial flats.

\begin{center}
\begin{figure}[h]
\begin{tikzpicture}[scale=0.5]
\filldraw[black] (0,0) node[]{};
\draw[] (0,0) -- (4,6) -- (8,0) -- (0,0);
\draw[] (0,0) -- (6,3);
\draw[] (2,3) -- (8,0);
\draw[] (4,6) -- (4,0);

\filldraw[black] (4,6) node[anchor=south]{$e_1^{\ast}$};
\filldraw[black] (0,0) node[anchor=east]{$e_0^{\ast}$};
\filldraw[black] (8,0) node[anchor=west]{$e_2^{\ast}$};

\filldraw[black] (2,3) node[anchor=east]{$\{e_0^{\ast},e_1^{\ast}\}$};
\filldraw[black] (6,3) node[anchor=west]{$\{e_1^{\ast},e_2^{\ast}\}$};
\filldraw[black] (4,0) node[anchor=north]{$\{e_0^{\ast},e_2^{\ast}\}$};

\end{tikzpicture}
\caption{The compactified cones of $\Trop(\P^2, \id)=\T\P^2$ given by flats of the uniform matroid $U_{3,3}$. This also  represents the compactified apartment in the spherical building $\calBbar_2(K)$.}
\label{figure_P^2}
\end{figure}
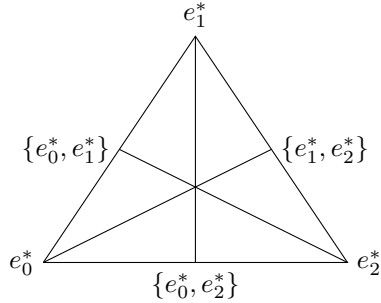
\end{center}
\end{example}

\section{Faithful linear tropicalization}\label{sec:sections}

The goal of this section is to show Theorem \ref{mainthm_section} from the introduction. We recall its statement:

\begin{theorem}[Theorem \ref{mainthm_section}]
Let $\iota\colon\P^r\hookrightarrow \P^n$ be a  linear closed immersion. Then there is a natural piecewise linear embedding $J\colon \Trop(\P^r,\iota)\rightarrow \calBbar_r(K)$ that makes the following diagram commute
\begin{equation*}
\begin{tikzcd}
\calBbar_r(K)\arrow[rd,"\calBbar(\iota)"']&&\Trop(\P^r,\iota) \arrow[rd, "\subseteq"]\arrow[ll, "J"']&\\
& \calBbar_n(K)\arrow[rr,"\trop"]&& \T\P^n.
\end{tikzcd}
\end{equation*}
\end{theorem}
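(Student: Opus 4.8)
The plan is to realize $J$ as a section of the continuous surjection $\pi_\iota=\trop_{\calXbar_n}\circ\,\calXbar(\iota)\colon\calXbar_r(K)\to\Trop(\P^r,\iota)$ that takes values in the diagonalizable locus $\calBbar_r(K)$. Since the composite $\trop\circ\calBbar(\iota)$ along the bottom row of the diagram is exactly the restriction of $\pi_\iota$ to $\calBbar_r(K)$, commutativity of the diagram is then equivalent to the identity $\pi_\iota\circ J=\id_{\Trop(\P^r,\iota)}$. Write $\iota=[f_0:\cdots:f_n]$ with $f_0,\dots,f_n\in(K^{r+1})^\ast$ a spanning set, which (as in Section \ref{subsection_trop_linear_spaces}) we may take nonzero; let $v$ be the realizable valuated matroid of Example \ref{realizableValuatedMatroids} with underlying matroid $M$, and recall from Theorem \ref{thm:TropIsTrop} that $\Trop(\P^r,\iota)=\calL(v)$.

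The core is a local computation. Fix a basis $B=\{b_0,\dots,b_r\}$ of $M$, so that $\mathbf f_B=(f_{b_0},\dots,f_{b_r})$ is a basis of $(K^{r+1})^\ast$; for $\vec a\in\Rbar^{r+1}$ write $||\cdot||_{\mathbf f_B,\vec a}$ for the diagonalizable seminorm sending $\sum_i\mu_i f_{b_i}\mapsto\max_i|\mu_i|e^{-a_i}$, and abbreviate $\vec u_B=(u_{b_0},\dots,u_{b_r})$. First I will show that if $u\in\calL(v)^\circ$ is such that $B$ is a basis of the initial matroid $M_u$, then $||f_j||_{\mathbf f_B,\vec u_B}=e^{-u_j}$ for every $j\in\{0,\dots,n\}$. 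For $j\in B$ this is the definition. For $j\notin B$, expanding $f_j=\sum_i\lambda_i f_{b_i}$ and using Cramer's rule together with the Grassmann--Plücker identity gives $\val\lambda_i=v(B\setminus b_i\cup j)-v(B)$, whence $||f_j||_{\mathbf f_B,\vec u_B}=\exp\!\big(v(B)-\min_i\!\big(v(B\setminus b_i\cup j)+u_{b_i}\big)\big)$, and the claim reduces to the identity
\[v(B)+u_j=\min_{i}\big(v(B\setminus b_i\cup j)+u_{b_i}\big).\]
The inequality ``$\le$'' is precisely $B\in M_u$ read off on the subsets $B\setminus b_i\cup j$ (and vacuous when such a subset is dependent); the reverse inequality follows from $u\in\calL(v)$ applied to $\tau=B\cup\{j\}\in\binom{\{0,\dots,n\}}{r+2}$ via Definition \ref{definition_tropicallinearspace}: the minimum $\min_{e\in\tau}\big(v(\tau\setminus e)+u_e\big)$ is attained at least twice, and by ``$\le$'' it is attained by the finite term $v(B)+u_j$, hence also by some $v(B\setminus b_i\cup j)+u_{b_i}$.

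Next I would establish independence of $B$ and pass to the boundary. Put $N_u(w)=\inf\{\max_j|c_j|e^{-u_j}\mid w=\sum_j c_j f_j\}$, a seminorm on $(K^{r+1})^\ast$ that makes sense for an arbitrary representative $u$ (with $e^{-\infty}=0$). The claim above yields $||\cdot||_{\mathbf f_B,\vec u_B}\le N_u$ (strong triangle inequality on any representation $w=\sum_j c_j f_j$) and $N_u\le||\cdot||_{\mathbf f_B,\vec u_B}$ (test the representation supported on $B$), so the two coincide; in particular $N_u$ is diagonalizable whenever such a $B$ exists. Now the two sides of the displayed identity, viewed as functions of a representative $u\in\Rbar^{n+1}$, are continuous with values in $(-\infty,\infty]$ (note $v(B)<\infty$), so their equality is a closed condition; as it holds on $\{u\in\calL(v)^\circ\mid B\in M_u\}$, it holds on the closure of that set, which is the local tropical linear space $\Trop(\P^r,\iota)_B$ of Definition \ref{def:local_tropical_linear_space}. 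Hence for every $u\in\Trop(\P^r,\iota)_B$ the seminorm $||\cdot||_{\mathbf f_B,\vec u_B}$---possibly now a proper seminorm, but still diagonalized by $\mathbf f_B$---still satisfies $||f_j||_{\mathbf f_B,\vec u_B}=e^{-u_j}$ for all $j$, and so again equals $N_u$. Since $\Trop(\P^r,\iota)=\bigcup_B\Trop(\P^r,\iota)_B$, the assignment $J(u):=[N_u]\in\calBbar_r(K)$ is well defined on all of $\Trop(\P^r,\iota)$.

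It remains to record the properties of $J$. By construction $\pi_\iota(J(u))=[-\log||f_0||_{N_u}:\cdots:-\log||f_n||_{N_u}]=u$, which is the asserted commutativity and in particular makes $J$ injective. On each closed polyhedral piece $\Trop(\P^r,\iota)_B$ the map $J$ coincides with $u\mapsto[||\cdot||_{\mathbf f_B,\vec u_B}]$, which is continuous and --- read in the evaluation coordinates $[\nu]\mapsto-\log||g||_\nu$ on $\calBbar_r(K)$, where for $g=\sum_i\gamma_i f_{b_i}$ this coordinate equals $\min_i(\val\gamma_i+u_{b_i})$ --- piecewise linear; by the pasting lemma $J$ is continuous and piecewise linear on all of $\Trop(\P^r,\iota)$. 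Finally $J$ is a topological embedding because it admits a continuous inverse, namely the restriction of $\pi_\iota$ to $J(\Trop(\P^r,\iota))$. The main obstacle is the displayed tropical-linear-algebra identity, and in particular teasing it out of the two a priori unrelated hypotheses ``$B\in M_u$'' and ``$u\in\calL(v)$''; once it is available, the comparison with the explicit diagonalizable seminorm, the independence of $B$, and the extension to the boundary are essentially formal --- the latter working precisely because the identity is a closed condition and because one covers the boundary strata by local tropical linear spaces, whose diagonalizing bases carry exponents equal to $\infty$, rather than by bases of $M$.
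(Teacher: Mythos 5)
Your proposal is correct, and the local computation — the ``key identity''
\[
v(B)+u_j \;=\; \min_{i}\big(v(B\setminus b_i\cup j)+u_{b_i}\big),
\]
derived from the Dressian condition for $\tau=B\cup\{j\}$ together with the minimality of $v(\sigma)-u\cdot e_\sigma$ at $\sigma=B$ — is exactly what the paper establishes inside Proposition~\ref{prop_section_local} (via Lemma~\ref{lem:Cramer}) to show $\pi_\iota\circ J_B=\id$ on $\Trop(\P^r,\iota)_B$. Where you genuinely diverge is the \emph{gluing}. The paper proves that $\seminorm{\cdot}_{\mathbf f_A,\vec u_A}$ and $\seminorm{\cdot}_{\mathbf f_B,\vec u_B}$ are homothetic for any two bases $A,B\in M_u$ by an explicit two-basis computation with Cramer coefficients $\alpha_i,\beta_j$ in the case $|A\triangle B|=2$, followed by an induction along a chain $B_0,\dots,B_m$ of single exchanges in the general case. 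You instead introduce the basis-independent quotient seminorm
\[
N_u(w)\;=\;\inf\big\{\max_j|c_j|e^{-u_j}\ \big\vert\ w=\textstyle\sum_j c_j f_j\big\},
\]
and deduce $N_u=\seminorm{\cdot}_{\mathbf f_B,\vec u_B}$ for every admissible $B$ directly from the key identity (one inequality from the ultrametric inequality across an arbitrary representation, the other from testing the representation supported on $B$). Since $N_u$ manifestly depends only on $u$, the gluing is then immediate, bypassing both the two-basis computation and the induction. This is a genuine conceptual simplification; the cost is small, since the two inequalities are soft consequences of the local identity. The extension to the boundary via closedness and the verification that $J$ is a piecewise linear embedding with inverse $\pi_\iota|_{J(\Trop(\P^r,\iota))}$ mirror the paper. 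One small point worth flagging explicitly: one should observe that $N_u$ is a \emph{non-trivial} seminorm so that $[N_u]$ is a bona fide point of $\calBbar_r(K)$; this follows from $N_u=\seminorm{\cdot}_{\mathbf f_B,\vec u_B}$ combined with Lemma~\ref{le:BasisHasOneFiniteCoord}, which guarantees $\vec u_B\neq(\infty,\dots,\infty)$, exactly as the paper invokes the same lemma in the corresponding step.
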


By a piecewise linear embedding $J$ we mean that we have a finite covering of $\Trop(\P^r,\iota)$ by subcomplexes such that the image of each subcomplex lies in an apartment and the restriction of $J$ on each subcomplex is piecewise linear. 
In particular, $\pi_{\iota}$ induces a piecewise linear homeomorphism between the union of apartments $\bigcup_{B \in \mbases M } 
 \calAbar(B)$ and the tropicalized linear subspace $\Trop(\P^r,\iota)$.

Choose $f_0,\ldots,f_n \in (K^{r+1})^\ast$ defining the embedding $\iota: \P^r \hookrightarrow \P^n$ and let $v$ be the corresponding valuated matroid of rank $r+1$ on $E = \{0,\ldots,n\}$ as in Example~\eqref{realizableValuatedMatroids} (a). 
As above, we assume that $f_i \neq 0$ for all $i \in E$. 
Let $B \in \mbases M$ be a basis of the underlying matroid $M$.
Recall that the compactified apartment $\calAbar(B)$ in $\calBbar_{r}(K)$ denotes the set of all seminorms on $(K^{r+1})^{*}$ diagonalized by~$B$. 
It is homeomorphic to~$\T\P^r$: a parametrization $\T\P^r\xrightarrow{\sim} \calAbar(B)$ is given by
\begin{equation*}
v = [v_0:\cdots:v_r] \longmapsto \seminorm{\cdot}_{B,v}. 
\end{equation*}   
This map is well defined because different projective representatives $v' = v + \lambda \allone$ give rise to homothetic seminorms  $\seminorm{\cdot}_{B,v'} =  \exp(-\lambda) ||.||_{B,v}$. 

For the proof of Theorem~\ref{mainthm_section} we need a couple of technical results first. We begin with a valuative version of Cramer's rule.

\begin{lemma}
\label{lem:Cramer}
    Let $B$ be a basis of $M$, and $k \in E \setminus B$. Write $f_k = \sum_{b \in B} \lambda_b f_b$ with $\lambda_b \in K$. For all $b \in B$ we have
    \begin{equation*}
        \val(\lambda_b) = v\big(k \cup B \setminus b\big) - v(B).
    \end{equation*}
\end{lemma}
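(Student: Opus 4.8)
The plan is to reduce the statement to an application of Cramer's rule, combined with the observation that the valuation $\val$ is insensitive to sign. First I would record that, since $B$ is a basis of the underlying matroid $M$, the determinant $\det[f_b : b \in B]$ is non-zero, so $\{f_b : b \in B\}$ is a $K$-basis of $K^{r+1}$ and the scalars in $f_k = \sum_{b \in B}\lambda_b f_b$ are uniquely determined; in particular $v(B) = \val\big(\det[f_b : b\in B]\big) \in \R$, so the right-hand side of the claimed identity is well-defined in $\Rbar$.

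Next, fix an ordering $B = \{b_0 < \dots < b_r\}$ and let $A$ be the $(r+1)\times(r+1)$ matrix with columns $f_{b_0},\dots,f_{b_r}$, so that $\lambda = (\lambda_{b_0},\dots,\lambda_{b_r})^{\mathsf T}$ solves $A\lambda = f_k$. Cramer's rule then gives, for each $i$,
\begin{equation*}
\lambda_{b_i}\cdot\det\big[f_{b_0}\ \cdots\ f_{b_r}\big] = \det\big[f_{b_0}\ \cdots\ f_{b_{i-1}}\ f_k\ f_{b_{i+1}}\ \cdots\ f_{b_r}\big].
\end{equation*}
The matrix on the right has column set $(k\cup B)\setminus b_i$, merely listed in a possibly non-increasing order; reordering the columns into increasing index order multiplies the determinant by $\pm 1$, and since $\val$ does not see signs, the valuation of the right-hand side equals $v\big(k\cup B\setminus b_i\big)$ by the definition of $v$ in Example~\ref{realizableValuatedMatroids}(a). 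Applying $\val$ to both sides (using $\val(xy)=\val(x)+\val(y)$ and the convention $\val(0)=\infty$) yields $\val(\lambda_{b_i}) + v(B) = v\big(k\cup B\setminus b_i\big)$, and subtracting the finite quantity $v(B)$ gives the claim.

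I do not expect any real obstacle here; the only points requiring care are bookkeeping ones, namely tracking the sign of the column permutation and checking that the identity persists in $\Rbar$ in the degenerate case where $k\cup B\setminus b_i$ fails to be a basis — equivalently $\lambda_{b_i}=0$, in which case the displayed product vanishes, forcing $v(k\cup B\setminus b_i)=\infty=\val(\lambda_{b_i})$. I would state those two conventions explicitly rather than gloss over them, since the lemma is invoked later at boundary strata where infinite coordinates genuinely occur.
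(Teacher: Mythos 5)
Your proof is correct and is essentially the same argument as the paper's: the paper expands $\det[f_k\ f_{b_1}\ \cdots\ f_{b_r}]$ by multilinearity of the determinant in the first column and observes that only the $\lambda_b$-term survives, which is precisely a one-line derivation of the Cramer identity you invoke. The only differences are cosmetic — you cite Cramer's rule by name rather than re-deriving it, and you spell out the sign-insensitivity of $\val$ and the degenerate case $\lambda_b=0$, both of which the paper leaves implicit.
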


\begin{proof} 
Label the elements of $B$ as $b_0, \dots, b_r$, with $b_0$ equal to our chosen $b$.
Using multilinearity of the determinant and properties of the valuation we find:
\begin{align*}
   v(k \cup B \setminus b) &= \val\Big(\det\begin{bmatrix} f_k & f_{b_1} &\cdots &f_{b_r}\end{bmatrix}\Big)\\  
   &= \val\Big(\sum_{i \in B} \lambda_i \det\begin{bmatrix}f_i & f_{b_1} &\cdots &f_{b_r}\end{bmatrix}\Big)\\ 
   &= \val\Big(\lambda_{b_0} \cdot \det\begin{bmatrix}f_{b_0}& \cdots & f_{b_r}\end{bmatrix}\Big)\\ 
   &= \val(\lambda_b) + v(B).  \qedhere
\end{align*}
\end{proof}

\begin{lemma}
\label{le:BasisHasOneFiniteCoord}
    Let $B$ be a basis of $(E, v)$. 
    If $u$ is in $\calL(v)$,
    then there is $k \in B$ such that $u_k \ne \infty$.
\end{lemma}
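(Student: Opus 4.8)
The plan is to argue by contradiction directly from the tropical Plücker relations of Definition~\ref{definition_tropicallinearspace}. Assume $u=(u_e)_{e\in E}\in\calL(v)$ but $u_k=\infty$ for every $k\in B$. Since $u$ is a point of $\T\P^n$ it is not the all-infinity tuple, so there is an index $j\in E$ with $u_j<\infty$, and by the assumption this forces $j\notin B$. As $|B|=r+1$ and $j\notin B$, the set $\tau:=B\cup\{j\}$ has cardinality $r+2$, hence is a legitimate index in ${E\choose r+2}$ for the defining condition of $\calL(v)$.

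Next I would inspect the $r+2$ quantities $v(\tau\setminus e)+u_e$ for $e\in\tau$. For $e=j$ we get $v(\tau\setminus j)+u_j=v(B)+u_j$, which lies in $\R$: indeed $v(B)<\infty$ because $B$ is a basis of the underlying matroid, and $u_j<\infty$ by the choice of $j$. For $e=b$ with $b\in B$ we get $v(\tau\setminus b)+u_b=\infty$, since $u_b=\infty$ and $\Rbar$ carries the min-plus structure. Hence exactly one of these $r+2$ values is finite, so the minimum equals $v(B)+u_j$ and is attained only at $e=j$. This contradicts the requirement that the minimum in Definition~\ref{definition_tropicallinearspace} be attained at least twice, and therefore some $u_k$ with $k\in B$ must be finite.

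I do not expect any real obstacle here; the statement falls out of unwinding the definitions. The only point worth a word of care is the degenerate situation in which $B=E$ is the unique basis (for instance when $r=n$ and the underlying matroid is uniform): then the hypothesis ``$u_k=\infty$ for all $k\in B$'' already says $u=(\infty,\dots,\infty)$, which is excluded from $\T\P^n$, so the contradiction is reached even before choosing $j$ --- and the opening observation that $u$ is not the all-infinity tuple covers this case uniformly.
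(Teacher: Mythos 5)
Your proof is correct and is essentially the same as the paper's: both identify a single finite term $v(B)+u_j$ in the Plücker relation for $\tau=B\cup\{j\}$, contradicting the minimum-twice condition. The only cosmetic difference is that the paper handles $B=E$ as a separate preliminary case, whereas you fold it into the opening observation that $u$ is not the all-infinity tuple; the paper's proof also contains a small typo ($v(B)+u_b$ where $v(B)+u_k$ is meant) which your write-up avoids.
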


\begin{proof}
    If $B=E$, the statement follows from the definition of tropical projective space.
    Otherwise, assume there is $u \in \calL(v)$ with $u_i=\infty$ for all $i \in B$. 
    Choose $k \in E\setminus B$ such that $u_k<\infty$ and set $\tau= k \cup B$. 
    Since $B$ is a basis, we have that $v(B)+u_b<\infty$. 
    But this is the only finite term in the minimum for Definition~\ref{definition_tropicallinearspace} hence it is not attained twice for $\tau$, which is a contradiction.
\end{proof}

For the following statement we interpret \cite[Theorem 2.6]{Rincon_localTropicalLinearSpaces} in our setting and extend to the compactifications. 
It gives a piecewise linear homeomorphism between the local tropical linear space $\Trop(\P^r,\iota)_{B}$ (as in Definition \ref{def:local_tropical_linear_space}) and the compactified apartment $\calAbar(B)$.

\begin{proposition}\label{prop_section_local}
The map $J_{B}$ sending $u \in \Trop(\P^r,\iota)_{B}$ to the seminorm $\seminorm \cdot_{B,u_B}$, where $u_{B} = u \cdot e_B$, is a piecewise linear homeomorphism between $\Trop(\P^r,\iota)_{B}$ and~$\calAbar(B)$.
Its inverse is the restriction of $\pi_{\iota}$. Explicitly, the seminorm $x = \vert \vert.\vert\vert_{B,v} \in \calAbar(B)$ is mapped to $\pi_{\iota}(x) \in \Trop(\P^r,\iota)_{B}$ with 
\begin{align*}
    \pi_{\iota}(x)_k = \left\{
\begin{array}{ll}
v_k & \textrm{ if } k \in B, \\
\min_{i \in B} v(k \cup B \setminus i) - v(B) + v_k & \textrm{ otherwise. }\\
\end{array}
\right.
\end{align*}
\end{proposition}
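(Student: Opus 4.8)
The plan is to verify the explicit formula for $\pi_\iota$ on the apartment $\calAbar(B)$ first, then deduce that $J_B$ is the inverse of $\pi_\iota|_{\calAbar(B)}$, and finally argue the piecewise linearity and homeomorphism claims by reducing to \cite[Theorem 2.6]{Rincon_localTropicalLinearSpaces} together with a compactification argument. Throughout I fix the basis $B$ and write $x = \seminorm{\cdot}_{B,v}$ for the diagonalizable seminorm on $(K^{r+1})^\ast$ attached to $v = [v_0 : \cdots : v_r] \in \T\P^r$ via the parametrization described above (reindexing so that $B = \{b_0,\ldots,b_r\}$ and $v$ is indexed by $B$).

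\textbf{Step 1: the formula for $\pi_\iota(x)$.} Recall that $\pi_\iota(x) = \big[-\log\seminorm{f_0}_x : \cdots : -\log\seminorm{f_n}_x\big]$. For $k \in B$, say $k = b_j$, the one-form $f_{b_j}$ is a basis vector for the seminorm $\seminorm{\cdot}_{B,v}$, so $\seminorm{f_{b_j}}_x = \exp(-v_{b_j})$, whence the $k$-th coordinate of $\pi_\iota(x)$ is $v_k$. For $k \in E\setminus B$, write $f_k = \sum_{b\in B}\lambda_b f_b$ as in Lemma~\ref{lem:Cramer}. By the definition of the diagonalizable seminorm, $\seminorm{f_k}_x = \max_{b\in B}\{|\lambda_b|\exp(-v_b)\}$, so
\begin{equation*}
-\log\seminorm{f_k}_x = \min_{b\in B}\big(\val(\lambda_b) + v_b\big) = \min_{b\in B}\big(v(k\cup B\setminus b) - v(B) + v_b\big),
\end{equation*}
where the last equality is exactly Lemma~\ref{lem:Cramer}. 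This is the asserted formula. (Note that Lemma~\ref{le:BasisHasOneFiniteCoord} guarantees that at least one $v_b$ is finite, so the expression makes sense as an element of $\T\P^n$; and if $v_k = v_b$ for $k,b\in B$ the formula is still projectively well-defined since replacing $v$ by $v + \mu\allone$ shifts every coordinate by $\mu$.)

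\textbf{Step 2: $J_B$ lands in $\Trop(\P^r,\iota)_B$ and is inverse to $\pi_\iota$.} Composing $\pi_\iota$ with $J_B$: given $u \in \Trop(\P^r,\iota)_B$, the seminorm $J_B(u) = \seminorm{\cdot}_{B,u_B}$ has $v_k = u_k$ for $k\in B$ by definition of $u_B = u\cdot e_B$. Plugging into the Step 1 formula, the coordinates of $\pi_\iota(J_B(u))$ indexed by $B$ are $u_k$ for $k\in B$, and for $k\notin B$ they are $\min_{i\in B}v(k\cup B\setminus i) - v(B) + u_k$. Since $u \in \Trop(\P^r,\iota)_B$, the initial matroid $M_u$ contains $B$, which forces the minimum over $i\in B$ of $v(k\cup B\setminus i) + u_{b_0} + \cdots + \widehat{u_i} + \cdots$... — more precisely, $M_u$ containing $B$ means $v(B) - \sum_{b\in B}u_b$ is minimal among all bases, and a standard local-structure argument (the same circuit computation used in \cite{Rincon_localTropicalLinearSpaces}) shows this is equivalent to $u_k = \min_{i\in B}v(k\cup B\setminus i) - v(B) + u_{?}$ holding with the correct indices for all $k\notin B$; hence $\pi_\iota(J_B(u)) = u$. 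Conversely, for $x = \seminorm{\cdot}_{B,v} \in \calAbar(B)$, Step 1 shows the $B$-coordinates of $\pi_\iota(x)$ are $v$, so $\pi_\iota(x)_B = v$ and $J_B(\pi_\iota(x)) = \seminorm{\cdot}_{B,v} = x$. It remains to check $\pi_\iota(\calAbar(B)) \subseteq \Trop(\P^r,\iota)_B$: by Theorem~\ref{thm:TropIsTrop} we have $\pi_\iota(\calXbar_r(K)) = \Trop(\P^r,\iota)$, and for $x\in\calAbar(B)$ the initial matroid $M_{\pi_\iota(x)}$ contains $B$ because $v(B)$ realizes the minimum in the relevant Plücker expressions — this is the realizability translation of the statement that $f_B = (f_b)_{b\in B}$ diagonalizes $x$.

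\textbf{Step 3: piecewise linearity and homeomorphism.} The map $J_B$ is the restriction of the linear projection $u \mapsto u\cdot e_B$ followed by the (piecewise-linear, in fact linear-on-the-nose) apartment parametrization $\T\P^r \xrightarrow{\sim}\calAbar(B)$; hence $J_B$ is piecewise linear. Its inverse $\pi_\iota|_{\calAbar(B)}$ is piecewise linear by the explicit formula in Step 1 (it is a minimum of affine-linear functions in each coordinate). Both spaces are compact: $\calAbar(B)\cong\T\P^r$ is compact, and $\Trop(\P^r,\iota)_B$ is closed in the compact space $\Trop(\P^r,\iota)\subseteq\T\P^n$. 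A continuous bijection between compact Hausdorff spaces is a homeomorphism, so $J_B$ and $\pi_\iota|_{\calAbar(B)}$ are mutually inverse homeomorphisms. The interior statement — that $J_B$ restricts to the homeomorphism between the non-compact local tropical linear space and the open apartment $\calA(B)$ of \cite[Theorem 2.6]{Rincon_localTropicalLinearSpaces} — follows by intersecting with the finite locus on both sides, since the Step 1 formula maps $v$ with all finite coordinates to $u$ with all finite coordinates and vice versa.

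\textbf{Main obstacle.} The subtle point is Step 2, specifically the precise equivalence between ``$M_u$ contains $B$'' and the explicit coordinate formula $u_k = \min_{i\in B}v(k\cup B\setminus i) - v(B) + u_{i_0}$ (with the right bookkeeping of which $u$-coordinate appears); this is where one must carefully invoke the local-tropical-linear-space description of Rincón and track the reindexing. Everything else — Step 1 via Cramer's rule, and Step 3 via compactness — is routine once the dictionary between the valuated matroid $v$, the seminorms $\seminorm{\cdot}_{B,v}$, and the diagonalizing basis $(f_b)_{b\in B}$ is set up correctly.
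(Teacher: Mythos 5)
Your Step~1 is correct and matches the paper's computation via Lemma~\ref{lem:Cramer} (and it quietly fixes a typo in the statement: the formula for $k\notin B$ should read $\min_{i\in B}\bigl(v(k\cup B\setminus i) - v(B) + v_i\bigr)$ with $v_i$, not $v_k$, inside the minimum — exactly as your own display has it). Step~3 is also fine and slightly more streamlined than the paper's density argument, by invoking compactness of both sides directly rather than first proving the homeomorphism on the finite locus and then taking closures. The genuine gap is in Step~2, and it is not a small one: the identity $\pi_\iota\circ J_B=\mathrm{id}$ on $\Trop(\P^r,\iota)_B$ is the entire content of the proposition, and you outsource it to ``a standard local-structure argument'' with a literal \texttt{?} where the decisive index should be. You even flag this yourself in the ``Main obstacle'' paragraph. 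What is missing is the short chain of inequalities that the paper spells out: for $u\in\Trop(\P^r,\iota)_B$ and $k\notin B$, set $\tau=B\cup\{k\}$. The membership $u\in\Trop(\P^r,\iota)=\calL(v)$ says the minimum of $i\mapsto v(\tau\setminus i)+u_i$ over $i\in\tau$ is attained at least twice; subtracting $u\cdot e_\tau$ turns this into the statement that $i\mapsto v(\tau\setminus i)-u\cdot e_{\tau\setminus i}$ attains its minimum twice. The membership $B\in M_u$ says $\sigma=B$ minimizes $v(\sigma)-u\cdot e_\sigma$, so the minimum is attained at $i=k$ (giving $v(B)-u\cdot e_B$); the tropical Plücker relation then forces it to also be attained at some $l\in B$. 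Unwinding and applying Lemma~\ref{lem:Cramer} gives $u_k=\val(\lambda_l)+u_l\le\val(\lambda_i)+u_i$ for all $i\in B$, i.e.\ exactly $-\log\seminorm{f_k}_{B,u_B}=\min_{i\in B}(\val(\lambda_i)+u_i)=u_k$. Two conceptual corrections to how you phrased the gap: first, the surviving index inside the minimum is $u_i$ (running over $B$), not $u_k$ and not some single privileged $u_{i_0}$; second, ``$M_u$ contains $B$'' alone is \emph{not} equivalent to the coordinate formula — it only yields the inequality $u_k\le\min_{i\in B}(\ldots)$, and you need the tropical Plücker condition on $\tau$ (i.e.\ $u\in\calL(v)$) to promote it to an equality. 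Both hypotheses on $u$ are used, one for each direction of the inequality.
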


\begin{proof}
First, we define $J_{B}$ on the open dense subset $\Trop(\P^r,\iota)_{B} \cap \R^{n+1}/\R\mathbbm{1}$ and show that this gives a piecewise linear homeomorphism to $\mathcal{A}(B)$.
Clearly, the map $\pi_{\iota}$ is injective on $\calA(B)$. 
So we show that $\pi_{\iota} \circ J_{B}$ is the identity on $\Trop(\P^r,\iota)_{B} \cap \R^{n+1}/\R\mathbbm{1}$,
that is for $u$ in the latter set we show

\[
\pi_\iota\big(J_B(u)\big) = \pi_{\iota}\big(\vert\vert.\vert\vert_{B,u_{B}}\big) = \big[-\log \vert\vert f_{0} \vert\vert_{B,u_{B}}:\cdots:-\log \vert\vert f_{n} \vert\vert_{B,u_{B}}\big] = [u_0:\cdots:u_n].
\]
Since $-\log \seminorm{ f_b}_{B,u_{B}} = u_b$ for $b \in B$, we only need to check the equality for $k$ in $E \setminus B$. 
As $u \in \Trop(\P^r,\iota)$, the condition in Definition \ref{definition_tropicallinearspace} for $B \cup \aset k$ says that the minimum is attained at least twice in 
  \begin{equation*}
      \min_{i \in k \cup B} \big\{v(k \cup B \setminus i) + u_{i} \big\}.
  \end{equation*}
  By subtracting $u \cdot e_{k \cup B}$ we see it is equivalent to the minimum being attainted at least twice in
    \begin{align*}
      \min_{i \in k \cup B} \big\{v(k \cup B \setminus i ) - u \cdot e_{k \cup B \setminus i} \big\}.
  \end{align*}
  Since $B$ is a basis of the initial matroid $M_u$, it minimizes the expression $v(\sigma) - u \cdot e_\sigma$ over all $\sigma \in {E \choose {r+1}}$. 
  So the minimum is achieved at $i = k$ and some other $i = l$. 
  That is, for all $i \in B$ we have
  \[
      v(B) + u_k = v\big(k \cup B \setminus l\big) + u_{l} \leq v\big(k \cup B \setminus i\big) + u_i.
    \]
Writing $f_k = \sum_{i \in B} \lambda_i f_i$,
by Lemma \ref{lem:Cramer} this is equivalent to
\[
    u_k = \val(\lambda_l) + u_{l} \leq \val(\lambda_i) + u_i
  \]
  for all $i \in B$. 
  Hence, $-\log \seminorm{f_k}_{B,u_{B}} = \min_{i \in B} \big(\val(\lambda_i) + u_{i}\big) = u_{k}$.

This also gives the alternative description of $\pi_{\iota}$. We are left to show that ${\pi_{\iota}}\big(\mathcal{A}(B)\big)$ lies in $\Trop(\P^r,\iota)_{B} \cap \R^{n+1} / \R \mathbbm{1}$ for which we refer to \cite[Theorem 2.6]{Rincon_localTropicalLinearSpaces}.
Hence, $J_{B}$ and the restriction of $\pi_{\iota}$ are piecewise linear inverse homeomorphisms between $\Trop(\P^r,\iota)_{B} \cap \R^{n+1}/\R\mathbbm{1}$ and $\calA(B)$. 
We now extend $J_{B}$ naturally to a piecewise linear map $\Trop(\P^r,\iota)_{B} \longrightarrow \calAbar(B)$ by sending $u = [u_0:\ldots:u_n]$ to $\vert \vert.\vert\vert_{B,u_B}$. 
This is well-defined if $u \cdot e_B$ lies in $\T\P^r$, namely if there is at least one finite coordinate in $u \cdot e_B$; this is proven below in Lemma~\ref{le:BasisHasOneFiniteCoord} and using the fact that  $\Trop(\PP^r, \iota) = \calL(v)$.
Then, $\pi_{\iota} \circ J_{B}$ is the identity on $\Trop(\P^r,\iota)_{B}$ since we have seen it is the identity for a dense subset and $\Trop(\P^r,\iota)_{B}$ is Hausdorff. 
Since $\calA(B)$ is dense in $\calAbar(B)$, 
the image $\pi_{\iota}\big(\calAbar(B)\big)$ is exactly the closure of $\Trop(\P^r,\iota)_{B} \cap \R^{n+1}/\R\mathbbm{1}$ which is $\Trop(\P^r,\iota)_{B}$. As before, $J_B \circ \pi_{\iota}|_{\calAbar(B)} = \id$ on $\calAbar(B)$ which concludes the proof.
\end{proof}

\begin{proof}[Proof of Theorem \ref{mainthm_section}]
Recall that  $\Trop(\P^r,\iota)$ is the union of all its local tropical linear spaces $\Trop(\P^r,\iota)_{B}$ where $B$ runs over the bases of the matroid $M$ associated to~$\Trop(\P^r, \iota)$. 
Thus, we define $J$ locally as $J_B$ and show that the maps $J_{B}$ glue. 
For $A, B \in \calB(M)$, we show that $J_{A}$ and $J_{B}$ glue on the open part $\Trop(\P^r,\iota)_{A} \cap \Trop(\P^r,\iota)_{B} \cap \R^{n+1} / \R \mathbbm{1}$. 
Choose $u=[u_0:\dots:u_n]$ in the latter set and, as in Proposition~\ref{prop_section_local}, write $u_A, u_B$ for the vectors in $\R^{r+1} / \R \mathbbm{1}$ with the coordinates of $u$ indexed by $A$ and~$B$, respectively. 
If $\seminorm{\cdot}_{A,u_{A}}$ is homothetic to
$ \seminorm{\cdot}_{B, u_B}$, we get a piecewise linear homeomorphism between $\Trop(\P^r,\iota)_{A} \cap \Trop(\P^r,\iota)_{B} \cap \R^{n+1} / \R \mathbbm{1}$ and $\calA(A) \cap \calA(B)$.

\textbf{Claim: } The seminorm $\seminorm{\cdot}_{A,u_A}$ is homothetic to $\seminorm{\cdot}_{B, u_B}$.

First assume that $A$ and $B$ differ by two elements, i.e.~there are $a \in A$ and $b \in B$ such that $A \setminus a = B \setminus b$.
Choose $v \in (K^{r+1})^{\ast}$ and write it in terms of the bases $A$ and $B$:
let $v = \sum_{i \in A} \alpha_i f_i = \alpha_a f_a + \sum_{i \in A \setminus a} \alpha_i f_i$ 
and $f_a = \beta_b f_b + \sum_{j \in B \setminus b} \beta_j f_j$.
Replacing, we get $v = \alpha_a \beta_b f_b 
+ \alpha_a \sum_{j \in B \setminus b} \beta_j f_j  
+ \sum_{i \in A \setminus a} \alpha_i f_i
= \alpha_a \beta_b f_b +  \sum_{j \in B \setminus b} (\alpha_a  \beta_j   + \alpha_j) f_j$.
We want to show equality of the expressions
  \begin{align}
  \label{eq:log_AuA_BuB}
  -\log \seminorm{v}_{A,u_{A}} &=  \min \left \{ \val(\alpha_a) + u_{a},  \min_{i \in A \setminus a}{  \val(\alpha_i) + u_{i}} \right \}, \\
\nonumber     -\log \vert\vert v \vert\vert_{B,u_{B}} 
     &= \min \left\{ \val(\alpha_a) + \val(\beta_b) + u_b, \min_{j \in B \setminus b} \{\val(\alpha_a \beta_j + \alpha_j) + u_{j} \} \right\}.
  \end{align}
By Lemma~\ref{lem:Cramer} we have for every $j \in B$ that  
\begin{align}
\label{eq:beta_i}
      \val(\beta_j) = v(a \cup B \setminus j) - v(B) = v(A \cup B \setminus j) - v(B).
\end{align}

As $u \in \Trop(\P^r,\iota)_B$, 
the basis $B$ is in the initial matroid $M_u$, 
so $B$ minimizes the expression $v(\sigma) - u \cdot e_\sigma$ over all $\sigma \in {E \choose r+1}$, 
so $v(B) - u \cdot e_B \le v(A \cup B \setminus j) - u \cdot e_{A \cup B \setminus j}$ for all $j \in B$. 
By Equation~\eqref{eq:beta_i} we get
\begin{align} \label{eq:beta_u} u_a - u_j = u \cdot (e_{A \cup B \setminus j} - e_B) \le v(A \cup B \setminus j) - v(B) = \val(\beta_j). \end{align}
Also $u$ is in $\Trop(\P^r,\iota)_A$, 
so $A$ is in the initial matroid $M_u$, 
thus $ v(A) - u \cdot e_A \le v(B) - u \cdot e_B$ and so
\begin{align} \label{eq:beta_b} u_a - u_b \ge v(A) - v(B) = \val(\beta_a). \end{align}
If we set $j$ equals $b$ in Equation~\eqref{eq:beta_u} and combine it with Equation~\eqref{eq:beta_b} we get $\val(\beta_b) + u_b = \val(\beta_a) + u_a$, and furthermore $\val(\alpha_a) + \val(\beta_b) + u_b = \val(\alpha_a) +  u_a$. So equality in the first terms of Equation~\eqref{eq:log_AuA_BuB} happens.
  
It remains to show for $i \in A \setminus a$ that either $\val(\alpha_a \beta_i + \alpha_i) = \val(\alpha_i)$ or $ \val(\alpha_i) + u_i \ge  \val(\alpha_a \beta_i + \alpha_i) + u_i\ge \val(\alpha_a) + \val(\beta_b) + u_b$. 
By properties of valuations we have
\[\val(\alpha_a \beta_i + \alpha_i) \ge \min\big(\val(\alpha_a) + \val(\beta_i), \val(\alpha_i)\big), \]
and moreover if $ \val(\alpha_a) + \val(\beta_i) > \val(\alpha_i)$, then $\val(\alpha_a \beta_i + \alpha_i) = \val(\alpha_i)$  and we are done. 
Thus, assume that $ \val(\alpha_a) + \val(\beta_i) \le \val(\alpha_i)$. 
In that case, we calculate using Equations~\eqref{eq:beta_i} and \eqref{eq:beta_b}, the following
\begin{align*}
    \big(\val(\alpha_a \beta_i + \alpha_i) + u_i\big) -\big(\val(\alpha_a) + \val(\beta_b) + u_b\big)
    &\ge \val(\beta_i) - \val(\beta_b)  + u_i - u_b  \\
    &\ge u_a - u_b - \val(\beta_b) \ge 0.
\end{align*}
Thus, both minima in Equation~\eqref{eq:log_AuA_BuB} coincide. 
For the general case where $A \Delta B$ has $2m$ elements, since both $A$ and $B$ are bases in the initial matroid $M_u$, by the basis exchange axiom there is a sequence of bases $B_0, B_1, \dots, B_m$ such that $B_0 = A$ and $B_m = B$ and every pair $B_q$, $B_{q+1}$ differs by two elements. 
Thus, we may apply our previous argument to every pair $B_q, B_{q+1}$ to conclude the claim for inner points of the local maps. 
Again, extending to the compactifications concludes the proof.
\end{proof}


\section{Examples: The trivially and the discretely valued case}\label{section_examples}
\subsection{The trivial valuation case}

We can make Theorems \ref{mainthm_limits} and \ref{mainthm_section} explicit when the valuation is trivial. 
Recall from Example \ref{ex_trivialvaluationdescription}, that there is a bijection
$$\calBbar(V)\stackrel{1:1}{\longleftrightarrow} \big\{\big(0=V_0 \subsetneq  V_1 \subsetneq  \dots \subsetneq  V_l=V^{\ast},c_1>\dots>c_{l-1}\big)\ \big\vert \   c_1,\dots,c_{l-1} \in \Rbar_{> 0} \big\}_{l=1,\ldots,r+1}.$$
In other words, a class of a seminorm is given by a flag of subspaces together with decreasing coordinates corresponding to logarithms of the (constant) values of the representative with generic value $1$. We fix an embedding $\iota=[f_0:\ldots:f_n]: \P^r\to \P^n$ and obtain a realizable matroid $M$ on $[n]$. Recall that we can compute $\Trop(\P^r,\iota)$ as a compactification of the cone complex over the order complex of flats of $M$, cf.\ Theorem \ref{thm:TropIsTrop} and Theorem \ref{theorem_trop_trivial}.

\subsubsection*{Theorem \ref{mainthm_limits}}
We can explicitly compute the maps $\pi_{\iota}$ in terms of both the coordinates of the building above and the description and coefficients for flats of $M$. \\
Let $ x\in \calBbar_r(K)$ be given by a flag $0=V_0 \subsetneq  V_1 \subsetneq  \dots \subsetneq  V_l=(K^{r+1})^{\ast}$ and coordinates $c_1>\dots>c_{l-1}$ as in Example \ref{ex_trivialvaluationdescription}. We formally set $c_0:=\infty,\ c_l:=0$. Let $||.|| \in x$ be a representative with generic value $1$ and $d_j:=\exp(-c_j)$. Recall that then $d_j$ is the constant value of $||.||$ on $V_j$ for $j=0,\dots,l$. Fix a given coordinate $i \in [n]$ and let $j$ be minimal such that $f_i\in V_j$. Then $-\log\big(||f_i||\big)=-\log(d_j)=c_j$. We consider the matroid on $[n]$ induced by $\iota$ and for $j=0,\dots,l$ we define a flat $F_j=\{i \in [n] \mid f_i \in V_j\}$. Then the above computation shows
\begin{align*}
\pi_{\iota}\big(||.||\big)  &= \big[-\log(||f_0||):\cdots : -\log(||f_n||)\big] \\
&=  \sum_{j=0}^{l-1} (c_j-c_{j+1}) e_{F_j}\in \Trop(\P^r,\iota).
\end{align*}

\subsubsection*{Theorem \ref{mainthm_section}}

A point $u=[u_0:\dots:u_n]\in \Trop(\P^r,\iota)$ can be written as 
$$u=\sum_{j=0}^l a_j e_{F_j}$$
for $a_j \in \R_{\geq 0}$ and a chain of flats $F_0\subseteq\dots \subseteq F_l = [n]$. Note that each flat $F_j$ of $M$ yields a subspace $V_j$ of $(K^{r+1})^{\ast}$ and that proper inclusions of flats yield proper inclusions of their corresponding subspaces.\\
Let $B \in \scrB(M)$ be any basis such that for all $j=0,\dots,l$ the rank of $F_j\cap B$ equals the rank of $F_j$. Such a basis can be obtained by successively extending bases of the flats. Then $u$ lies in the local tropical linear space $\Trop(\P^r,\iota)_{B}$. We want to compute the map $J_{B}: \Trop(\P^r,\iota)_{B}\to \calBbar_r(K)$ from Proposition \ref{prop_section_local}.\\
Fix $i \in B$ and let $j$ be minimal such that $i\in F_j$. Then we have
\begin{equation*} 
u_i=\sum_{k: i\in F_k}a_k 
=\sum_{k=j}^l a_k
\end{equation*}
Then $J_{B}$ sends $u$ to the homothety class of the seminorm having generic constant value $\exp(-u_i)=\exp(-\sum_{k=j}^l a_k)$ on $V_j$. In the coordinates from Example \ref{ex_trivialvaluationdescription}, if we set 
$c_j := \sum_{k=j}^{l-1} a_k$, we have
$$J(u)=J_{B}(u)=\big(0\subsetneq V_1\subsetneq \dots\subsetneq V_l=(K^{r+1})^{\ast}, c_1>\dots>c_{l-1}\big).$$

\subsection{Lattices and the discrete valuation case}\label{sec:lattices}
Let $V$ be a vector space of dimension $r+1$ over $K$.
\begin{definition}
    A \emph{lattice} in $V^*$ is an $\OO_K$-submodule $\Lambda$ such that
    \[ \Lambda\otimes_{\OO_K}K\simeq V^*.\]
   To a linear map $f\colon V\to W$ and a lattice $\Lambda\subseteq V^*$ we can associate the lattice $(f^t)^{-1}(\Lambda)$ in $W^*$.
\end{definition}

\begin{remark}
    That $\Lambda\otimes_{\OO_K}K\simeq V^*$ is equivalent to the following: for every $f\in V^*$ there exists a $c$ in $K$ such that $c^{-1}f$ is in $\Lambda$.
\end{remark}
Note that, despite $K$ being generated over $\OO_K$ by infinitely many elements $e_{-\gamma},\gamma\in\Gamma_{\geq 0}$ (where $\val(e_{-\gamma}) = -\gamma$), these generators satisfy obvious relations $e_{-\gamma_1}=r_{12}e_{-\gamma_2}$ with $r_{12}\in\OO_K$ for $\gamma_1<\gamma_2$. When inverting all the $r_{ij}$, the relations show that the resulting module is freely generated by one element $e_0$, which can be taken to be $1\in K$, showing that $K\otimes_{\OO_K}K\simeq K$. So a lattice could contain a vector subspace of $V^*$, in which case it would not be finitely generated as an $\OO_K$-module.
  \begin{lemma}  If a lattice is finitely generated, then it is free on $r+1$ generators. \end{lemma}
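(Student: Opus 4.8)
The plan is to exploit that $\OO_K$ is a valuation ring. Concretely, $\OO_K$ is local, and for any $a,b\in\OO_K$ one has $a\mid b$ or $b\mid a$; equivalently, among finitely many elements of $\OO_K$ there is always one of minimal valuation that divides all the others. Moreover $\Lambda$ is torsion-free as an $\OO_K$-module, since it sits inside the $K$-vector space $V^\ast$: if $af=0$ with $a\in\OO_K\setminus\{0\}$ and $f\in\Lambda$, then $f=a^{-1}(af)=0$ in $V^\ast$. Given these two facts, the strategy is: choose a generating set $x_1,\dots,x_n$ of $\Lambda$ of minimal cardinality $n$ (possible because $\Lambda$ is finitely generated, so that no proper subset of $\{x_1,\dots,x_n\}$ generates $\Lambda$), show that the presentation map $\OO_K^{\,n}\to\Lambda$, $e_i\mapsto x_i$, has trivial kernel, and finally read off $n$.

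For the kernel, I would argue as follows. Suppose $\sum_{i=1}^n a_ix_i=0$ with the $a_i\in\OO_K$ not all zero. Picking $a_j$ of minimal valuation among the nonzero coefficients, we may write $a_i=a_jb_i$ with $b_i\in\OO_K$ and $b_j=1$. Then $a_j\big(\sum_i b_ix_i\big)=0$, and torsion-freeness of $\Lambda$ together with $a_j\neq 0$ forces $\sum_i b_ix_i=0$, i.e.
\[
x_j=-\sum_{i\neq j}b_ix_i\in\sum_{i\neq j}\OO_K\,x_i,
\]
so $\{x_i\}_{i\neq j}$ already generates $\Lambda$, contradicting minimality. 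Hence $\Lambda\simeq\OO_K^{\,n}$ is free. To determine $n$, I would tensor with $K$ over $\OO_K$ and use the defining isomorphism $\Lambda\otimes_{\OO_K}K\simeq V^\ast$: this gives $K^{\,n}\simeq V^\ast$, whence $n=\dim_K V^\ast=r+1$.

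I do not anticipate a serious obstacle; the one step that genuinely uses the hypotheses is the freeness argument above, which breaks down over more general rings precisely because divisibility need not be total there, and which uses finite generation to produce a minimal generating set. In the discretely valued situation of this subsection one could shortcut this by invoking the structure theorem for finitely generated torsion-free modules over the PID $\OO_K$, but the argument sketched here needs no such input and is valid for an arbitrary complete non-Archimedean field $K$.
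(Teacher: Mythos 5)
Your proof is correct and, modulo a slight reorganization, uses the same key step as the paper: normalize a dependence relation by the coefficient of extremal valuation to expose a redundant generator, contradicting minimality of the generating set. The paper begins with a $K$-linear dependence among $r+2$ alleged generators and clears denominators by the largest one, whereas you work directly with an alleged $\OO_K$-relation in the kernel of the presentation map $\OO_K^n\to\Lambda$ and then read off $n=r+1$ by tensoring with $K$; the underlying arithmetic is identical, and both versions are valid for an arbitrary complete non-Archimedean $K$, as you note.
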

  \begin{proof} Since the lattice spans a vector space of dimension $r+1$ over $K$, $r+1$ is clearly the minimum number of generators. Suppose that there were $r+2$. Then we would find a non-trivial $K$-linear dependence relation. Compare the denominators and multiply by the one (say it is the $0$th) achieving the highest absolute value. We thus obtain an $\OO_K$-linear dependence relation with invertible $0$th coefficient, showing that the $0$th generator is redundant.
  \end{proof}
In the following we present ways to go between seminorms and lattices. In general, they are not inverse to one another (cf. \cite[Lemma I.2.2]{Schneider}).
\begin{definition}
    Let $\Lambda\subseteq V^*$ be a lattice. The seminorm associated to $\Lambda$ is (called its \emph{gauge}):
    \[q_\Lambda(f)=\inf_{f\in c\Lambda}\lvert c\rvert\in\Rbar.\]
\end{definition}
\begin{definition}
    The lattice associated to a seminorm $q$ is the closed unit ball $\Lambda_q=q^{-1}([0,1])$.
\end{definition}
\begin{remark}
     If $\Lambda$ contains a vector subspace of $V^*$, the associated seminorm is not a norm, and vice versa.
\end{remark}
Note that all norms on a finite-dimensional vector space are equivalent. In particular, the space is complete with respect to any norm. Once a basis $V\simeq K^{r+1}$ is chosen, one such norm is given by $\lvert\lvert(\lambda_0,\ldots,\lambda_r)\rvert\rvert_\infty=\max_i\lvert \lambda_i\rvert$, which is clearly diagonalizable. The \emph{closed unit polydisc} is the lattice $\{v\in V\colon\lvert\lvert v\rvert\rvert_\infty\leq1\}$. It is finitely generated by the elements of the chosen basis.
\begin{proposition}         
        If $K$ is spherically complete, the correspondences above induce an equivalence between closed unit polydiscs (with respect to some basis and $\lvert\lvert\cdot\rvert\rvert_\infty$) and $\Gamma$-valued norms.
\end{proposition}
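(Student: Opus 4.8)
The plan is to show that the gauge construction $\Lambda\mapsto q_\Lambda$ and the closed-unit-ball construction $q\mapsto\Lambda_q$ restrict to mutually inverse bijections between the closed unit polydiscs $\Lambda_{\mathbf e}=\bigoplus_{i=0}^r\OO_K e_i$, one for each basis $\mathbf e=(e_0,\dots,e_r)$ of $V^\ast$, and the $\Gamma$-valued norms on $V^\ast$, i.e.\ the norms $q$ with $q(f)\in\lvert K^\ast\rvert$ for all $f\neq 0$. The only non-formal input will be the diagonalizability of norms from Proposition~\ref{prop_sphericallycomplete}, and this is exactly where spherical completeness of $K$ enters; the rest is bookkeeping with the explicit description of diagonalizable norms.

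First I would treat the passage from polydiscs to norms by computing the gauge directly. Fix a basis $\mathbf e$ of $V^\ast$ and write $f=\sum_i\lambda_i e_i\in V^\ast$ with $\lambda_i\in K$. Then $f\in c\Lambda_{\mathbf e}$ iff $\lvert\lambda_i\rvert\le\lvert c\rvert$ for all $i$, so $q_{\Lambda_{\mathbf e}}(f)=\inf\{\lvert c\rvert : \lvert c\rvert\ge\max_i\lvert\lambda_i\rvert\}$. Since $\lvert\lambda_i\rvert\in\lvert K^\ast\rvert\cup\{0\}$ for every $\lambda_i\in K$, the value $\max_i\lvert\lambda_i\rvert$ again lies in $\lvert K^\ast\rvert\cup\{0\}$, so the infimum is attained and $q_{\Lambda_{\mathbf e}}(f)=\max_i\lvert\lambda_i\rvert=\|f\|_\infty$. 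Hence $q_{\Lambda_{\mathbf e}}=\|\cdot\|_{\mathbf e,\vec 0}$ is a $\Gamma$-valued norm, and its closed unit ball is $\Lambda_{q_{\Lambda_{\mathbf e}}}=\bigoplus_i\OO_K e_i=\Lambda_{\mathbf e}$. So $\Lambda\mapsto\Lambda_{q_\Lambda}$ is the identity on closed unit polydiscs.

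Next I would treat the passage from norms to polydiscs, which is where Proposition~\ref{prop_sphericallycomplete} is used. Let $q$ be a $\Gamma$-valued norm on $V^\ast$. By that proposition $q$ is diagonalizable, so $q=\|\cdot\|_{\mathbf e,\vec a}$ for some basis $\mathbf e$ of $V^\ast$ and some $\vec a\in\R^{r+1}$, with all coordinates finite because $q$ is a norm. The $\Gamma$-valued hypothesis gives $q(e_i)=e^{-a_i}\in\lvert K^\ast\rvert$, so we may choose $\mu_i\in K^\ast$ with $\lvert\mu_i\rvert=e^{a_i}$ and replace $\mathbf e$ by the basis $\mathbf e'=(\mu_0 e_0,\dots,\mu_r e_r)$ of $V^\ast$. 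A direct computation gives $q=\|\cdot\|_{\mathbf e',\vec 0}$, so $\Lambda_q=q^{-1}([0,1])$ is exactly the closed unit polydisc $\Lambda_{\mathbf e'}$. Combined with the previous paragraph this gives $q_{\Lambda_q}=q_{\Lambda_{\mathbf e'}}=q$, so the two assignments are mutually inverse, as desired.

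The step I expect to require care is the rescaling of the basis: one has to check that the scalars $\mu_i$ exist — which is precisely the $\Gamma$-valued condition, and is not implied by diagonalizability alone — and that in the rescaled basis $q$ becomes literally the sup-norm, so that $\Lambda_q$ is genuinely a polydisc rather than merely homothetic to or sandwiched between polydiscs. This is the point that fixes the failure of the gauge and unit-ball maps to be inverse in general (cf.\ \cite[Lemma I.2.2]{Schneider}): dropping either restriction, the unit ball of $q_\Lambda$ may be strictly larger than $\Lambda$, or $q_{\Lambda_q}$ may differ from $q$.
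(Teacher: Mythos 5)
Your proposal is correct and follows essentially the same route as the paper's proof: diagonalize the $\Gamma$-valued norm via Proposition~\ref{prop_sphericallycomplete}, rescale the diagonalizing basis using the $\Gamma$-valuedness of $q$ to turn it into a genuine sup-norm, and then check that the gauge and unit-ball constructions are mutually inverse on the resulting polydiscs and norms. The bookkeeping is organized slightly differently (you check both round-trip compositions explicitly, while the paper folds them together), but the two key steps — diagonalizability and the rescaling of the basis to make $q=\lVert\cdot\rVert_\infty$ — are exactly the same.
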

\begin{proof}
    
Let $q$ be a $\Gamma$-valued norm. Since $K$ is spherically complete, there exists a basis $\{v_0,\ldots,v_r\}$ of $V^*$ such that $q(\lambda_0v_0+\ldots+\lambda_rv_r)=\max_i\big(\lvert\lambda_i\rvert\alpha_i\big)$. Note that $\alpha_i=q(e_i)$ is an element of the value group by assumption. Let $c_i\in K$ be any element such that $\lvert c_i\rvert=\alpha_i$. Then the associated lattice $\Lambda_q$ is the closed unit polydisc with respect to the basis $\{c_0^{-1}v_0,\ldots,c_r^{-1}v_r\}$.

Vice versa, if $\Lambda$ is the closed unit polydisc with respect to a basis $\{v_0,\ldots,v_r\}$, then $q_{\Lambda}=\lvert\lvert\cdot\rvert\rvert_\infty$ with respect to the same basis. Note that $v\in c\Lambda_q$ if and only if $q(v)\leq\lvert c\rvert$, therefore
\[q_{\Lambda_q}(v)=\inf\big\{\lvert c\rvert \ \big\vert\  v\in c\Lambda_q\big\}=q(v),\]
since we have assumed that $q$ takes values in $\Gamma$.
The inclusion $\Lambda\subseteq\Lambda_{q_\Lambda}$ is always true. On the other hand, $v\in\Lambda_{q_\Lambda}$ if and only if $q_{\Lambda}(v)\leq 1$; but since $q_{\Lambda}=\lvert\lvert\cdot\rvert\rvert_\infty$ with respect to some basis for which $\Lambda$ is the closed unit polydisc, it is clear that $q_{\Lambda}(v)\leq 1$ if and only if $v\in\Lambda$.
\end{proof}

\begin{remark}
    In the spherically complete case, a description of all lattices can be found in \cite[Theorem 3.6]{ChernikovMennen}.
\end{remark}

\begin{corollary}
    If $K$ is a complete discretely valued field, the correspondences above induce an equivalence between finitely generated lattices and integer-valued norms.
\end{corollary}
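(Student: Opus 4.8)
The plan is to reduce the statement to the preceding Proposition by unwinding what ``discretely valued'' adds to ``spherically complete''. First I would recall that a complete discretely valued field is spherically complete (Example~\ref{ex_spherically_complete}~(ii)), so that Proposition applies: the two assignments $\Lambda\mapsto q_\Lambda$ and $q\mapsto\Lambda_q$ are mutually inverse bijections between closed unit polydiscs in $V^*$ (with respect to some basis and $\|\cdot\|_\infty$) and $\Gamma$-valued norms on $V^*$. Since the value group $\Gamma$ of a discretely valued field is infinite cyclic, generated by $\val(\pi)$, a norm is $\Gamma$-valued precisely when it is integer-valued (after the normalization $\val(\pi)=1$). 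Hence the whole content of the corollary is the identification, among all lattices in $V^*$, of the closed unit polydiscs with the finitely generated lattices.

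For that identification I would prove both inclusions. A closed unit polydisc $\{v\in V^*\colon\|v\|_\infty\le 1\}$ with respect to a basis $(w_0,\dots,w_r)$ of $V^*$ is exactly the $\OO_K$-module $\bigoplus_i\OO_K w_i$, which is finitely generated. Conversely, if $\Lambda$ is a finitely generated lattice, then by the Lemma above it is free on $r+1$ generators $w_0,\dots,w_r$; these span $V^*$ over $K$, so they form a basis, and the strong triangle inequality yields $\sum_i\lambda_i w_i\in\Lambda\iff|\lambda_i|\le1$ for all $i\iff\|\sum_i\lambda_i w_i\|_\infty\le1$. Thus $\Lambda$ is the closed unit polydisc attached to this basis, and the two classes of lattices coincide. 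Restricting the bijections of the Proposition then gives the desired equivalence.

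Finally I would check that both sides land where they should. The gauge $q_\Lambda$ of a finitely generated lattice $\Lambda$ is a genuine norm and not merely a seminorm, because a finitely generated $\OO_K$-module contains no nonzero $K$-subspace of $V^*$ (a line would force infinite generation), and it is integer-valued since its values lie in $|K^\ast|\cup\{0\}=|\pi|^{\Z}\cup\{0\}$ while vanishing only at $0$; conversely, for an integer-valued norm $q$, spherical completeness furnishes a diagonalizing basis $(v_0,\dots,v_r)$ with $q(v_i)\in|K^\ast|$, so rescaling each $v_i$ as in the proof of the Proposition exhibits $\Lambda_q$ as a closed unit polydisc, hence as a finitely generated lattice. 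There is no genuine obstacle here beyond this bookkeeping; the one place that requires a little care is exactly the passage between ``finitely generated'' and ``closed unit polydisc'', where the Lemma on freeness of finitely generated lattices does the essential work.
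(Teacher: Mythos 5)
Your proof is correct and is the natural derivation the paper intends: discretely valued complete implies spherically complete, so the Proposition applies, and then one identifies $\Gamma$-valued norms with integer-valued ones (after normalization) and closed unit polydiscs with finitely generated lattices via the freeness Lemma. One tiny slip worth fixing: the equivalence $\sum_i\lambda_i w_i\in\Lambda\iff|\lambda_i|\le1$ for all $i$ follows from the freeness of $\Lambda$ on the basis $(w_i)$ (and $\Lambda$ being an $\OO_K$-module), not from the strong triangle inequality.
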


Since we are interested in the building of $PGL$, we consider (semi)norms up to homothety ($q\sim\gamma q$ for any $\gamma\in\exp(\Gamma)$), which correspond to lattices up to homothety ($\Lambda\sim c\Lambda$ for any $c\in K^\ast$).

\smallskip

\paragraph{\emph{Simplices}}
In the discretely valued case, there is a way of reconstructing the simplicial structure of the building in terms of nested sequences of finitely generated lattices and collections of real numbers.

Indeed, to any norm $q$ we can associate a nested sequence of lattices        
\[\big\{\Lambda(c)=q^{-1}([0,c])\ \big|\ c\in [1,e]\big\}.\]
Up to homothety, this list consists of $0<k+1\leq r+1$ lattices $\Lambda_0\subseteq\ldots\subseteq\Lambda_k\subseteq\pi^{-1}\Lambda_0$, where $\pi$ is the uniformiser of $\OO_K$. Moreover, we can associate to $q$ the list of jumps:
\[\big\{c_i=\inf\{c\in(1,e]\ \mid\ \Lambda(c)=\Lambda_i\}\big\}_{i=1,\ldots,k}.\]

Vice versa, given a nested sequence of lattices $\Lambda_0\subseteq\ldots\subseteq\Lambda_k\subseteq\pi^{-1}\Lambda_0$, we can find a basis $\{e_0,\ldots,e_r\}$ of $\Lambda_k$ such that 
\[\Lambda_{h-1}=\langle\pi e_0,\ldots,\pi e_{i_h},e_{i_h+1},\ldots,e_r\rangle\]
for some $0\leq i_k<\ldots<i_1<r=:i_0$. Given $c_1,\ldots,c_k\in(1,e)$, we set $c_0=1$ and $\alpha_i=c_j$ if $i_{j}<i\leq i_{j-1}$. We then define the associated norm 
\[q(\lambda_0e_0+\ldots+\lambda_re_r)=\max_i\big\{\lvert\lambda_i\rvert\alpha_i\big\}.\]

\begin{remark}
    Possibly infinitely-generated lattices in $V^*$ are dual to finitely generated submodules of possibly non-maximal rank in $V$, which provides another description of the compactification of the building, as explained in \cite[\S\S 3-4]{Werner_seminorms} in the case of a local field.
\end{remark}
 
 \paragraph{\emph{Convexity}}
    In the discretely-valued case, there are various notions of convexity in the building: Weyl convexity (see for instance \cite[\S 4.11]{AbramenkoBrown}) is the one that bears the closest resemblance to the metric approach to buildings. There are weaker notions of convexity that have been studied in \cite{JoswigSturmfelsYu} from a tropical perspective: a set of lattices up to homothety is $+$-convex (resp. $\cap$-convex) if it is closed under rescaling and taking sums (resp. intersections) as submodules of $K^{r+1}$. In terms of norms, these operations correspond to taking pointwise maximum (resp. the largest norm that is bounded above by pointwise minimum).
    
    \emph{Membranes}, introduced in \cite{KeelTevelev}, are $+$-convex unions of apartments, consisting of all lattices admitting a basis of the form $\{\pi^{a_0}f_{i_0},\ldots,\pi^{a_r}f_{i_r}\}$, where the $a_i$'s are integers and the $f_i$'s are chosen from a fixed set of $n+1$ vectors $\{f_0,\ldots,f_n\}$ in $K^{r+1}$. In \cite[Theorem 4.11]{KeelTevelev} and \cite[Theorem 18]{JoswigSturmfelsYu}, the authors show that the lattice points in the membrane $[M]$ correspond bijectively to integer points in $\Trop(\P^r,\iota)$, where $\iota$ denotes the embedding  $[f_0:\ldots:f_n]: \P^r \xrightarrow{} \P^n$. Both the membrane and the tropical linear space are indeed the tropical convex hull of finitely many points (at infinity). The correspondence is based upon the \emph{nearest point map} onto the tropical lattice polytope $\Trop(\P^r,\iota)$ described in \cite[Lemma 21]{JoswigSturmfelsYu}, which can be interpreted as the tropicalization map $\pi_{\iota}$ and the section $J$ (see Section \ref{sec:sections} for the definitions) restricted to the affine part $\Trop(\P^r,\iota) \cap \R^n / \R \mathbbm{1}$.


\section{The universal realizable valuated matroid}

\subsection{Infinite tropicalization}\label{section_infiniteGrassmannian}

Let $v$ be a valuated matroid on a, possibly infinite, ground set $E$, and let $E'\subset E$ be any subset containing a basis. Then the restriction of $v$ to ${E' \choose r+1}$ yields again a valuated matroid $v'$. 
We associate a tropical linear space to $v$ in general, by gluing together the usual construction for finite valuated matroids.
We define the sets
\begin{align*}
    \T\P^E&:=\left(\{(u_e)_{e\in E} \mid u_e \in \Rbar \}\setminus \{(\infty)_{e \in E}\}\right)/ \R\mathbbm{1} \\
    U_{v}&:=\big\{(u_e)_{e\in E}  \in \T\P^E \ \big\vert\ \textrm{ for all bases } A\subset E \textrm{ there is } a \in A \textrm{ with } u_a \neq \infty \big\} \subset \T\P^E.
\end{align*}

\begin{definition}\label{def:trop_v}
   The \emph{tropical linear space} $\calL(v)\subset\TT\PP^E$ associated to $v$ is the set of $(u_e)_{e \in E} \in \T\P^E$ such that for any $\tau\in{E \choose r+2}$ the minimum in $v\big(\tau\setminus\{e\}\big) + u_e$ is attained at least twice.
\end{definition}

Note that the proof of Lemma~\ref{le:BasisHasOneFiniteCoord}, does not use the finiteness condition of $E$, hence we have for a valuated matroid $v$ and its associated tropical linear space that
    \[\calL(v) \subseteq U_v\subseteq\T\P^E.\]
Passing to the smaller set $U_v$ allows us to endow it with a limit topology as follows. Let $I$ be the category of finite subsets $E'$ of $E$ containing a basis, with inclusions as morphisms. Then we have a functor from $I^{\textrm{op}}$ into the category of topological spaces, assigning to each $E'$ the space $U_{v'}\subseteq \T\P^{E'}$, where $v'$ is the restriction of $v$ to $E'$,
and to every inclusion the corresponding coordinate projection. Note that these coordinate projections are well-defined by the construction of the $U_{v'}$. 
We see that

$$U_v= \varprojlim_{E' \in I} U_{v'},$$
and we can endow it with the limit topology.
Since we also have an identification
$$\calL(v)\xlongrightarrow{\sim}\varprojlim_{E' \in I} \calL(v'),$$
and in particular we can endow $\calL(v)$ with the limit topology.

We now restrict our attention to realizable valuated matroids. Recall that the \emph{universal realizable matroid} $w_{\rm univ}$ is given by $w_{\rm univ}(A)=\val\big(\det(A)\big)$ for $A\in{E\choose{r+1}}$. 
Theorem \ref{mainthm_limits} and Speyer's result on the tropicalization of finite linear subspaces (Theorem \ref{thm:TropIsTrop}) allow us to identify the space of seminorms on $(K^{r+1})^{\ast}$ up to homothety with the tropical linear space associated to the universal realizable matroid. \begin{theorem}[Theorem \ref{maintheorem_infinite_tropicalization}] 

The Goldman-Iwahori space is the tropical linear space associated to the universal realizable matroid $w_{\univ}$, i.e.
$$\calXbar_r(K)=\calL(w_{\rm univ}).$$
\end{theorem}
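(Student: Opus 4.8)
The plan is to combine Theorem \ref{mainthm_limits} (the building is the inverse limit of all finite linear tropicalizations) with Speyer's identification $\calL(v)=\Trop(\P^r,\iota)$ (Theorem \ref{thm:TropIsTrop}) and the description of $\calL(w_{\univ})$ as an inverse limit of the $\calL(v')$ over finite subsets $E'\subseteq(K^{r+1})^\ast$ containing a basis. Concretely, I would index both sides by the cofiltered category $I$ of finite subsets $E'=\{f_0,\dots,f_n\}$ of $(K^{r+1})^\ast$ that span, with inclusions as morphisms; such an $E'$ is the same datum as a linear embedding $\iota_{E'}=[f_0:\cdots:f_n]\colon\P^r\hookrightarrow\P^n$ (after discarding the zero vector, which may be assumed absent as in Section~\ref{subsection_trop_linear_spaces}), and the restriction $v'=w_{\univ}|_{{E'\choose r+1}}$ is precisely the realizable valuated matroid attached to $\iota_{E'}$ in Example~\ref{realizableValuatedMatroids}(a). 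By Theorem~\ref{thm:TropIsTrop} we then have $\calL(v')=\Trop(\P^r,\iota_{E'})$ for every object of $I$, compatibly with the coordinate-projection transition maps on both sides (these are exactly the toric coordinate projections appearing in Theorem~\ref{mainthm_limits}, cf.\ Remark~\ref{rem_different_categories}(c)). Passing to the limit over $I$ therefore yields a homeomorphism
\[
\calL(w_{\univ})=\varprojlim_{E'\in I}\calL(v')=\varprojlim_{E'\in I}\Trop(\P^r,\iota_{E'})\cong\varprojlim_{\iota\in I}\Trop(\P^r,\iota)\cong\calXbar_r(K),
\]
where the last homeomorphism is Theorem~\ref{mainthm_limits}.

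First I would check that the two index categories really agree up to a cofinal identification: the category $I$ used in Section~\ref{section_infiniteGrassmannian} has finite spanning subsets of $E=(K^{r+1})^\ast$ as objects, whereas the category $I$ of Theorem~\ref{mainthm_limits} allows embeddings into torus-invariant open subsets $U\subseteq\P^n$ and arbitrary linear toric morphisms. By Remark~\ref{rem_different_categories}(a),(c) we may replace the latter by the wide subcategory of non-degenerate embeddings with only coordinate projections as morphisms, and this is visibly equivalent to the former (an embedding $[f_0:\cdots:f_n]$ with all $f_i\neq0$ is the tuple $\{f_0,\dots,f_n\}$, a coordinate projection is a subset inclusion). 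Next I would record that the identification $\calL(v')=\Trop(\P^r,\iota_{E'})$ of Theorem~\ref{thm:TropIsTrop} is natural in $E'$: restricting a realization to a spanning subset corresponds on the tropical side to the coordinate projection, and on the Grassmann–Plücker side to restricting the bracket function, so the squares commute. Then the inverse limits over the (identified) categories are canonically homeomorphic, and we are done by invoking Theorem~\ref{mainthm_limits}.

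The main obstacle I anticipate is purely bookkeeping rather than conceptual: making the two limit topologies literally match. On the matroid side $\calL(w_{\univ})$ is defined as a subset of $\T\P^E$ cut out by the infinitely many tropical Plücker relations, and then shown to coincide with $\varprojlim_{E'}\calL(v')$ with the limit topology; on the building side $\calXbar_r(K)$ carries the topology of pointwise convergence of seminorms, which Theorem~\ref{mainthm_limits} identifies with the limit of the finite tropicalization topologies. One must verify that under the dictionary ``$f\in E$ $\leftrightarrow$ $f$-th coordinate of $\T\P^E$'' the evaluation map $x\mapsto -\log\|f\|_x$ is exactly the $f$-th coordinate of the point of $\calL(w_{\univ})$, so that the two systems of continuous coordinate functions — hence the two initial topologies — coincide; this is essentially the computation $\pi_\iota(x)=[-\log\|f_0\|_x:\cdots:-\log\|f_n\|_x]$ already recorded after Proposition~\ref{prop:tropIsProj}, applied uniformly. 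A minor subtlety worth a sentence is the passage between $\T\P^E$ and the subset $U_{w_{\univ}}$: one needs that $\calL(w_{\univ})\subseteq U_{w_{\univ}}$ (which is Lemma~\ref{le:BasisHasOneFiniteCoord}, valid for infinite $E$) so that the coordinate projections used to define the limit are everywhere defined, and that $\calXbar_r(K)$ lands inside $U_{w_{\univ}}$ because a non-trivial seminorm cannot vanish on an entire spanning set.
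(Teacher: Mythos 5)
Your proposal is correct and takes essentially the same route as the paper: invoke Theorem~\ref{mainthm_limits} together with Remark~\ref{rem_different_categories}(a),(c) to express $\calXbar_r(K)$ as the limit over the coordinate-projection category of non-degenerate embeddings, identify that category (up to cofinal equivalence) with finite spanning subsets of $(K^{r+1})^\ast$, and apply Theorem~\ref{thm:TropIsTrop} levelwise to match the diagram with the one defining $\calL(w_{\univ})$ as $\varprojlim_{E'}\calL(v')$. The paper compresses the bookkeeping (it simply notes that repeating entries and permuting coordinates produce homeomorphic tropicalizations, absorbing the tuple-vs-set discrepancy you handle via Remark~\ref{rem_different_categories}), but the ingredients and their assembly are the same as in your argument.
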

\begin{proof}
By Theorem \ref{mainthm_limits} and Remark \ref{rem_different_categories} (c), we can write $\calXbar_r(K)$ as the limit of all linear tropicalizations with respect to the category of coordinate projections. The tropicalization functor from this category is naturally equivalent to the functor which associates to an embedding $\iota=[f_0:\ldots:f_n]$ the tropical linear space associated to the valuated matroid given by $\{f_0,\dots,f_n\}$, as repeating entries and permuting coordinates yields homeomorphic tropicalizations.
\end{proof}

Let $E$ denote the set of (non-zero) vectors in $K^{r+1}$. We obtain linear maps
\begin{equation}\label{universal_projection}
\bigoplus_E K\twoheadlongrightarrow K^{r+1},
\end{equation}
and dually
\[\iota_{\rm univ}\colon K^{r+1}\hooklongrightarrow K^E.\]
As in Example \ref{realizableValuatedMatroids} and Section \ref{subsection_trop_linear_spaces}, we may associate to $\iota_{\rm univ}$ the realizable valuated matroid $w_{\rm univ}$. Hence, we can interpret $\calXbar_r(K)$ as the tropicalization of the universal projective linear subspace of rank $r$.

In the following we will show that $\calXbar_r(K)$ is cut out by much simpler equations than the ones coming from the universal realizable valuated matroid.
We can think of (\ref{universal_projection}) as the $(r+1)\times E$ matrix whose $e$-th column vector represents $e$ in the standard basis of $K^{r+1}$. It follows that the $i$-th row corresponds to the $i$-th coordinate projection as a function on $E$, that is:
\begin{proposition} \label{lem:small_circuits}
 The image of $\iota_{\rm univ}\colon K^{r+1}\hookrightarrow K^E$ consists of (the restrictions of) all the linear maps from $K^{r+1}$ (resp. $E$) to $K$. In particular, the equations of $\iota_{\rm univ}$ involve only finitely many variables.
\end{proposition}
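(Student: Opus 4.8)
The plan is to unwind the definition of $\iota_{\rm univ}$ as the transpose of the tautological surjection and then read off both assertions from elementary linear algebra. The first step is to observe that the surjection~\eqref{universal_projection} is the map $\bigoplus_{e\in E}K\to K^{r+1}$ sending the $e$-th standard basis vector $\delta_e$ to $e$ itself; this is well defined and surjective precisely because $E$ contains a basis of $K^{r+1}$. Dualizing, and using $(\bigoplus_{e\in E}K)^{\ast}=\prod_{e\in E}K=K^E$, its transpose $\iota_{\rm univ}$ is the map
\[
(K^{r+1})^{\ast}\longrightarrow K^E,\qquad \varphi\longmapsto\bigl(\varphi(e)\bigr)_{e\in E},
\]
where we identify $(K^{r+1})^{\ast}$ with $K^{r+1}$ via the standard basis, as in the statement. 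Since $E$ spans $K^{r+1}$, any linear form vanishing on all of $E$ is zero, so $\iota_{\rm univ}$ is injective, and its image is exactly $\{\varphi|_E\mid\varphi\in\Hom_K(K^{r+1},K)\}$, the set of restrictions to $E$ of linear functionals on $K^{r+1}$; this is the first assertion, and it exhibits the image as an $(r+1)$-dimensional subspace of $K^E$.

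For the statement about the defining equations, I would fix once and for all a basis $B\subseteq E$ of $K^{r+1}$ — for instance the standard basis vectors, which are nonzero and hence lie in $E$ — and, for each $e\in E$, expand $e=\sum_{b\in B}\lambda^{(e)}_b b$ with $\lambda^{(e)}_b\in K$ uniquely determined. The claim is then that a tuple $(x_e)_{e\in E}\in K^E$ lies in the image of $\iota_{\rm univ}$ if and only if
\[
x_e=\sum_{b\in B}\lambda^{(e)}_b x_b\qquad\text{for all }e\in E .
\]
The forward implication is immediate from linearity of $\varphi$; for the converse, given a tuple satisfying these relations one defines $\varphi$ on $B$ by $\varphi(b)=x_b$, extends linearly, and checks $\varphi(e)=x_e$ for every $e$ using the relation indexed by $e$. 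Each of these equations involves only the $r+2$ coordinates $x_e$ and $\{x_b\}_{b\in B}$, which is exactly the ``finitely many variables'' statement; in matroid language these are the circuit relations of the realization, and the rank-$(r+1)$ matroid on $E$ has all circuits of cardinality at most $r+2$, which explains the name of the proposition.

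I expect no genuine obstacle here; the argument is a direct dualization. The only points requiring care are bookkeeping ones: keeping track of the dualization so that the domain of $\iota_{\rm univ}$ really is the space of linear forms (identified with $K^{r+1}$), and phrasing the conclusion precisely — namely that \emph{each individual} defining equation involves at most $r+2$ variables, not that there are finitely many equations (there are $|E\setminus B|$ of them).
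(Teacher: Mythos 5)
Your proof is correct, and it is a genuinely different route from the one the paper takes for the second assertion. The paper's own proof is terse: it simply writes down the two families of linearity relations $x_{\lambda e}=\lambda x_e$ (for $\lambda\in K^\times$ with $e,\lambda e\in E$) and $x_{e_1+e_2}=x_{e_1}+x_{e_2}$ (for $e_1,e_2,e_1+e_2\in E$), which involve only two, respectively three, variables each and which are manifestly the conditions for $(x_e)_e$ to be the restriction of a linear map. This has the advantage of requiring no choice of basis and of giving the sharpest possible bound on the number of variables per equation. You instead fix a basis $B\subseteq E$ and use the $|E\setminus B|$ ``circuit'' relations $x_e=\sum_{b\in B}\lambda^{(e)}_b x_b$, each involving $r+2$ variables; these are exactly the equations the paper records in the display immediately \emph{after} the proof as an equivalent, basis-dependent formulation. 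Your derivation of the first assertion via dualization of $\bigoplus_E K\twoheadrightarrow K^{r+1}$ and the observation that injectivity follows since $E$ spans is cleaner than what the paper writes, which essentially takes the identification of the image with restrictions of linear forms for granted. Both approaches prove the stated result; yours trades the basis-free symmetry of the paper's equations for a more explicit and self-contained argument, and your closing caveat — that it is each individual equation, not the full system, that involves finitely many variables — is exactly the right reading of the proposition.
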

\begin{proof}
 Let $x_e$ denote the coordinate on $K^E$ such that $x_e(f)=f(e)$. The equations of $\iota_{\rm univ}$ are
 \begin{equation*}\begin{split}
  x_{\lambda e}=\lambda x_e,\quad \quad &\textrm{ for } \lambda\in K^\times,e,\lambda e\in E;\\ x_{e_1+e_2}=x_{e_1}+x_{e_2},\quad \quad  &\textrm{ for }e_1,e_2,e_1+e_2\in E. \qedhere
  \end{split}\end{equation*}
\end{proof}
\begin{remark}
 These are the equations of $K^{r+1}\hookrightarrow K^{E'}$ for any subset $E'\subseteq E$ and the corresponding projection $K^E\to K^{E'}$ (restriction of functions).
\end{remark}

Given a basis $(e_1,\ldots,e_{r+1})$ of $K^{r+1}$ (e.g.~the standard one), these equations are equivalent to
\[x_e=\sum_{i=1}^{r+1}\on[e]_ix_{e_i}.\]

As a curiosity, we note that the \emph{large} circuits of 
Definition \ref{definition_tropicallinearspace} are equivalent to the tropicalization of the \emph{small} circuits from Proposition \ref{lem:small_circuits}. 
\begin{proposition}
Write $w$ for $w_{\univ}$. The minimum is attained at least twice in all
 \begin{align}
  \min\big(u_{\lambda e},\, u_e + \val(\lambda)\big)\qquad &\textrm{ for }\lambda\in K^\times \textrm{ and } e,\lambda e\in E;\label{eqn:small_circuits_i}\\
  \min\big(u_{e_1+e_2},\, u_{e_1},\, u_{e_2}\big) \qquad &\textrm{ for } e_1,e_2,e_1+e_2\in E.     \label{eqn:small_circuits_ii}
  \end{align}
  if and only if the minimum is attained at least twice in all
  \begin{equation}\label{eqn:all_circuits}
   \min_{i \in \tau}\big(u_{i} + w(\tau \setminus i)\big)\qquad \textrm{ for }\tau\in{E\choose{r+2}}.
  \end{equation}
\end{proposition}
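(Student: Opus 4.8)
The plan is to show that \emph{both} conditions appearing in the proposition are reformulations of the single statement that $u$ is the homothety class of a non-trivial seminorm: the left-hand ``small circuit'' conditions will encode the seminorm axioms directly, while the right-hand ``large circuit'' conditions are verbatim the defining condition of $\calL(w_{\univ})$, hence of $\calXbar_r(K)$ by Theorem~\ref{maintheorem_infinite_tropicalization}.

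First I would fix notation: write $V$ for the $(r+1)$-dimensional $K$-vector space on which $w_{\univ}$ is built, so that $E = V\setminus\{0\}$, and for $u=(u_e)_{e\in E}\in\T\P^E$ set $\|e\|_u := \exp(-u_e)$ (with $\exp(-\infty)=0$) and $\|0\|_u:=0$. This is well defined up to a global positive scalar, i.e.\ up to homothety, and is non-trivial precisely because $u$ is not identically $\infty$ in $\T\P^E$. Then I would carry out the elementary translation. The minimum in \eqref{eqn:small_circuits_i} has only two terms, so ``attained at least twice'' is the equality $u_{\lambda e}=u_e+\val(\lambda)$ for all $\lambda\in K^\times$ with $e,\lambda e\in E$; exponentiating, this is exactly homogeneity $\|\lambda e\|_u=|\lambda|\,\|e\|_u$ (the cases $\lambda=0$ or $e=0$ being trivial). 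For \eqref{eqn:small_circuits_ii}: ``attained at least twice'' for $e_1,e_2$ with $e_1+e_2\in E$ forces $u_{e_1+e_2}\ge\min(u_{e_1},u_{e_2})$, i.e.\ the strong triangle inequality $\|e_1+e_2\|_u\le\max(\|e_1\|_u,\|e_2\|_u)$; and when $e_1+e_2=0$ the inequality is automatic since $\|0\|_u=0$. Conversely, a non-trivial seminorm satisfies every instance of \eqref{eqn:small_circuits_i} and \eqref{eqn:small_circuits_ii}, using the standard fact that $\|e_1\|_u\ne\|e_2\|_u$ forces $\|e_1+e_2\|_u=\max(\|e_1\|_u,\|e_2\|_u)$ to get the minimum attained twice in the remaining case. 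Hence the left-hand condition holds for $u$ exactly when $\|\cdot\|_u$ is a non-trivial seminorm on $V$.

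For the right-hand condition, observe that ``the minimum is attained at least twice in all \eqref{eqn:all_circuits}'' is, word for word, the defining condition of $\calL(w_{\univ})$ in Definition~\ref{def:trop_v} (which is Definition~\ref{definition_tropicallinearspace} over the possibly infinite ground set $E$). By Theorem~\ref{maintheorem_infinite_tropicalization} we have $\calL(w_{\univ})=\calXbar_r(K)$, and the identification constructed there sends $u$ to the homothety class of the seminorm with $\|e\|=\exp(-u_e)$, since it is assembled from the coordinate-projection limit of Remark~\ref{rem_different_categories}(c) whose transition maps are $x\mapsto[-\log\|f_0\|_x:\cdots:-\log\|f_n\|_x]$. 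So the right-hand condition also holds for $u$ exactly when $\|\cdot\|_u$ is a non-trivial seminorm on $V$, and the two conditions coincide, proving the proposition.

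The only delicate point I anticipate is the bookkeeping in the ``minimum attained at least twice'' translation: keeping straight that \eqref{eqn:small_circuits_i} has two summands (so ``twice'' forces equality) while \eqref{eqn:small_circuits_ii} has three, and dispatching the degenerate inputs $e_1+e_2=0$ and $e_2$ parallel to $e_1$. If one instead wanted an argument independent of Theorem~\ref{maintheorem_infinite_tropicalization}, the genuine obstacle would be the implication from the small circuits to all of \eqref{eqn:all_circuits}: after producing $\|\cdot\|_u$ from the small relations and diagonalizing it over a spherically complete extension, one would have to reprove Speyer's identity $\calL(v)=\Trop(\P^r,\iota)$ (Theorem~\ref{thm:TropIsTrop}) by a Cramer-type computation in the diagonalizing basis along the lines of Lemma~\ref{lem:Cramer}. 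It is precisely this Speyer-type input that Theorem~\ref{maintheorem_infinite_tropicalization} packages for us, which is why invoking it makes the proof short.
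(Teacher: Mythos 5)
Your proof is correct, and it takes a genuinely different route from the paper's. Where the paper argues directly — obtaining \eqref{eqn:small_circuits_i}, \eqref{eqn:small_circuits_ii} from \eqref{eqn:all_circuits} by choosing $\tau=\{\lambda e\}\cup B$ resp.\ $\tau=\{e_1+e_2\}\cup B$ for a basis $B$ through $e$ resp.\ $\{e_1,e_2\}$, and returning via Lemma~\ref{lem:Cramer} plus an induction generalizing \eqref{eqn:small_circuits_ii} to $k$-fold sums — you instead factor both sides of the equivalence through the single property ``$\|e\|_u := \exp(-u_e)$ defines a non-trivial seminorm on $V$.'' Your translation of the left-hand side is right, including the careful points: two summands in \eqref{eqn:small_circuits_i} force equality, hence homogeneity; ``attained twice'' in \eqref{eqn:small_circuits_ii} gives the strong triangle inequality, and the standard ultrametric equality $\|e_1\|\neq\|e_2\|\Rightarrow\|e_1+e_2\|=\max$ closes the converse; and the $e_1+e_2=0$ case is vacuous for the seminorm axiom. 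Your translation of the right-hand side correctly reads off the Theorem~\ref{maintheorem_infinite_tropicalization} homeomorphism as $[x]\mapsto(-\log\|e\|_x)_{e\in E}$, which is indeed how the limit in Remark~\ref{rem_different_categories}(c) is assembled from the maps $\pi_\iota$. The trade-off is that your argument is shorter and more conceptual but demotes the proposition to a corollary of Theorem~\ref{maintheorem_infinite_tropicalization}, whereas the paper's direct proof is self-contained (given Lemma~\ref{lem:Cramer}) and makes visible what Proposition~\ref{lem:small_circuits} is pointing at — that the rank-$(r{+}2)$ Pl\"ucker circuits over an infinite ground set genuinely collapse to the size-$2$ and size-$3$ linear circuits. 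You accurately flag where the real work would lie if Theorem~\ref{maintheorem_infinite_tropicalization} were unavailable: the implication from the small circuits to \eqref{eqn:all_circuits}, which is precisely what the Cramer computation in the paper supplies.
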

\begin{proof}
 \eqref{eqn:all_circuits}$\Rightarrow$\eqref{eqn:small_circuits_i}: 
 If $e$ is not $0$, consider a basis $B=\{e,e_1,\ldots,e_r\}$ and apply \eqref{eqn:all_circuits} to $\{\lambda e\}\cup B$. 
 Note that $w(\tau \setminus i  ) = \infty$ unless $i=e,\lambda e$. Equation \eqref{eqn:small_circuits_i} follows. Equation \eqref{eqn:small_circuits_ii} follows similarly.
 
\eqref{eqn:small_circuits_i} and \eqref{eqn:small_circuits_ii}$\Rightarrow$\eqref{eqn:all_circuits}: 
From Equation~\eqref{eqn:small_circuits_i} we get $u_{\lambda e}=u_e+\val(\lambda)$, and by induction from Equation~\eqref{eqn:small_circuits_ii} we get that for $k\ge 2$ the minimum is attained at least twice in 
\begin{align}\label{eqn:sums}
\min( u_{e_1+\ldots+e_k}, u_{e_1}, \dots, u_{e_k}).
\end{align}
Let $\tau\in{E\choose{r+2}}$. 
If $\tau$ contains no basis, then Equation~\eqref{eqn:all_circuits} is trivially true. Thus, suppose $\tau = f \cup B$, with $B$ a basis, and write $f =\sum_{e \in B}\lambda_e e$.
By Lemma~\ref{lem:Cramer}, we have that $\val(\lambda_i) = w(\tau \setminus i ) - w(B)$ for all $i \in B$. If we set $\lambda_f = 1$, so  $\val(\lambda_f) = 0$ and  $\val(\lambda_i) = v(\tau \setminus i ) - w(B)$ also for $i = f$, we get
\begin{align*} 
  \min_{i \in \tau}\big(u_{i} + w(\tau \setminus i)\big) - w(B) =& \min_{i \in \tau}\big(u_{i} + w(\tau \setminus i) - w(B)\big)\\
  =& \min_{i \in \tau}\big(u_{i} + \val(\lambda_i)\big) = \min_{i \in \tau}(u_{\lambda_i i}),
\end{align*}
and the conclusion follows from Equation~\eqref{eqn:sums}.
\end{proof}

\subsection{Tight spans} \label{section_tightspan} 
Many of our results extend results by Dress and collaborators in T-theory~\cite{dress1996t}. 
A tight span is an isometric embedding of a metric space $E$ into a hyperconvex metric space~$T_E$. 
The motivation for these spaces is fitting phylogenetic data; see the cited work for a discussion. 
There are also applications to extending valuations in $p$-adic geometry \cite{DressTerhalle_padic}.
A so-called four-point condition \cite[Section 4.6]{dress1996t}  is necessary and sufficient for a tight span to be an $\RR$-tree; see Figure~\ref{fig:example_trivial_valuation} for an example of an $\RR$-tree.
This condition is essentially the basis exchange property for rank-2 valuated matroids.
Hence, generalizing this to higher dimensions, \cite{DressTerhalle_padic} introduces the tight span of a rank-$r$ valuated matroid $(E,v)$ as 
\begin{align}
\label{def:TightSpan}
T_{(E,v)}=\Big\{p\in\R^E\ \Big|\ \forall e\in E: p(e) =\max_{e_2,\ldots,e_r \in E}\big\{v(e,e_2,\ldots,e_r)-\sum_{i=2}^r p(e_i)\big\}\Big\}.
\end{align}
The maximum in Equation~\eqref{def:TightSpan} says that the functions $p$ efficiently satisfy a triangle inequality.
Their formulation using $\max$ is dual to our work using $\min$.

In parallel to Proposition~\ref{prop_section_local}, there is a local description of $T_{(E,v)}$ indexed by bases. 
Given a basis $B = \{b_1, \dots, b_r \}$, the map $\Phi_{B}$ that sends a point $(u_1, \dots, u_r)$ in the hyperplane $H_{v(B)} =  \big\{ \sum_{i=0}^r u_i = v(B) \big\}$ to the linear map $\Phi_B(u) : E\rightarrow \R$ given by
\begin{equation*} e \longmapsto \max_{i \in B} \big\{ v(e \cup B \setminus i) + u_i \big\} -  v(B) \end{equation*}
is injective \cite[Proposition 1]{DressTerhalle_treeoflife}. 
There is a similar polyhedral description for intersections $\Phi_A(H) \cap \Phi_B(H)$ with $A$ and $B$ bases of $(E,v)$.
Moreover, as $B$ varies over all bases of $(E,v)$, the whole $T_{(E,v)}$ is covered.
It can be shown via the theory of $(B,N)$ pairs that the $\Phi_B(H)$ form the apartments of a building:

\begin{theorem}[Theorem~1 in \cite{DressTerhalle_treeoflife}]
Let $K$ be a non-archimedean field with discrete valuation, and $w_{\univ} $ the universal realizable matroid of rank-$r$ as in Example~\ref{realizableValuatedMatroids}.
The space $T_{(K^r \setminus 0, w_{\univ})}$ is a geometric realization of the affine building associated to $\GL_r(K)$.
\end{theorem}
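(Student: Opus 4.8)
The plan is to deduce the statement from Theorem~\ref{maintheorem_infinite_tropicalization} together with the apartment-wise description of tropical linear spaces underlying Theorem~\ref{mainthm_section}, rather than from the $BN$-pair machinery of \cite{DressTerhalle_treeoflife}. Since $K$ is discretely valued it is spherically complete (Example~\ref{ex_spherically_complete}), so $\calBbar_{r-1}(K)=\calXbar_{r-1}(K)$ and its finite-coordinate locus $\calB_{r-1}(K)=\calX_{r-1}(K)$ is exactly the affine Bruhat--Tits building of $\PGL_r(K)$, an affine building in the sense of \cite[Definition~1.9]{RemyThuillierWerner_survey}; passing to the reduced building modulo the centre, this is what ``the affine building associated to $\GL_r(K)$'' means, and the dimensions match, as the tight-span apartments $\Phi_B\big(H_{w_{\univ}(B)}\big)$ are hyperplanes in $\R^B$ with $|B|=r$, hence $(r-1)$-dimensional. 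By Theorem~\ref{maintheorem_infinite_tropicalization} (with $r$ replaced by $r-1$) we have $\calXbar_{r-1}(K)=\calL(w_{\univ})$ for $w_{\univ}$ the rank-$r$ universal realizable valuated matroid on $E=K^r\setminus\{0\}$. It therefore suffices to produce a piecewise linear homeomorphism $\Psi\colon T_{(E,w_{\univ})}\xrightarrow{\sim}\calX_{r-1}(K)$ carrying the tight-span apartments $\Phi_B\big(H_{w_{\univ}(B)}\big)$ to the building apartments $\calA(B)$, and then transport the building structure.

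The first step is a purely formal dictionary. Apply the substitution $v\leftrightarrow -v$, $\max\leftrightarrow\min$ flagged right after \eqref{def:TightSpan}, and read the affine slice $H_{w_{\univ}(B)}=\big\{\sum_{i\in B}u_i=w_{\univ}(B)\big\}$ as the choice of the homothety-class representative normalised by $\sum_{i\in B}p(i)=w_{\univ}(B)$, where $p(e)=-\log\seminorm{x_e}$ and $x_e$ is the coordinate functional of $\iota_{\univ}$ with $x_e(f)=f(e)$. Under this dictionary the chart $\Phi_B$ is \emph{verbatim} the parametrisation $\T\P^{r-1}\xrightarrow{\sim}\calAbar(B)$ from the proof of Theorem~\ref{mainthm_section}, its inverse is $\pi_{\iota_{\univ}}$ restricted to $\calA(B)$ --- which, up to the sign flip, is precisely the formula of Proposition~\ref{prop_section_local} --- and the local statement matching Proposition~\ref{prop_section_local} on the tight-span side is \cite[Proposition~1]{DressTerhalle_treeoflife}. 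That $\Psi$ is well defined on all of $T_{(E,w_{\univ})}$ then becomes the assertion that the $\Phi_B$ glue along intersections in the same way the $\calAbar(B)$ do; but the gluing compatibility for the $\calAbar(B)$ --- that $\pi_{\iota}$ is the common inverse on $\calAbar(A)\cap\calAbar(B)$ --- is exactly what is established in the proof of Theorem~\ref{mainthm_section}, by walking from $A$ to $B$ through single valuated basis exchanges and invoking Lemma~\ref{lem:Cramer}. Since the $\Phi_B\big(H_{w_{\univ}(B)}\big)$ cover $T_{(E,w_{\univ})}$ and the $\calAbar(B)$ cover $\calXbar_{r-1}(K)$, one obtains $\Psi$, and restricting to finite coordinates gives $T_{(E,w_{\univ})}\cong\calX_{r-1}(K)=\calB_{r-1}(K)$, a geometric realization of the affine building with the $\Phi_B\big(H_{w_{\univ}(B)}\big)$ as apartments.

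Two points need care. The topology: $T_{(E,w_{\univ})}$ carries the subspace topology from the product $\R^E$, whereas $\calL(w_{\univ})$ was given the projective-limit topology over finite rank-$r$ subsets of $E$; these coincide because, by Proposition~\ref{lem:small_circuits}, each evaluation $p\mapsto p(e)$ already factors through such a finite subset, so the two families of basic open sets agree. The point I expect to be the main obstacle is the coherence of the determinant normalisation across apartments: whenever a norm is diagonalised by two bases $A,B$, the constraints $\sum_{i\in A}p(i)=w_{\univ}(A)$ and $\sum_{i\in B}p(i)=w_{\univ}(B)$ must be simultaneously satisfiable, so that the $\Phi_B$ patch to a \emph{single-valued} section of $\calN(K^r)\twoheadrightarrow\calX_{r-1}(K)$ rather than a multi-valued one. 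This is precisely where realizability of $w_{\univ}$ is used: walking from $A$ to $B$ through single exchanges, Lemma~\ref{lem:Cramer} shows that the quantity $\sum_{i\in B'}p(i)-w_{\univ}(B')$ is unchanged as $B'$ varies along the exchange path, which is the same computation that makes the maps $J_B$ of Theorem~\ref{mainthm_section} glue. The classical alternative, taken in \cite{DressTerhalle_treeoflife}, is instead to let $\GL_r(K)$ act on $T_{(E,w_{\univ})}$ through its action on $E$, extract a $BN$-pair with affine Weyl group $\widetilde S_r$ from the stabilisers of a fixed apartment and chamber, and quote Bruhat--Tits theory; there the labour is in the $BN$-pair axioms and in identifying the standard apartment system, which one can then cross-check against the $\Phi_B\big(H_{w_{\univ}(B)}\big)$ via the dictionary above.
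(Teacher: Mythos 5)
Your proposal is correct in substance and, notably, it is much closer to the paper's own perspective than you might think: the paper states the theorem as a citation (Dress--Terhalle's $BN$-pair argument is the ``official'' proof), but immediately afterwards it gives the same reduction you propose --- that a point $\Phi_B(u)$ of the tight span corresponds, via $\exp(p(\cdot))$ and Lemma~\ref{lem:Cramer}, to a norm diagonalized by $B$, that the tropical circuit equations of Definition~\ref{definition_tropicallinearspace} are then satisfied, and that (following Speyer) $T_{(E,v)}$ is a lift of $\calL(v)$ to $\R^E$. Combined with Theorem~\ref{maintheorem_infinite_tropicalization} this is exactly the chain $T_{(E,w_{\univ})}\twoheadrightarrow\calL(w_{\univ})\cap\R^E/\R\mathbbm{1}=\calX_{r-1}(K)=\calB_{r-1}(K)$ that you run. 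What you add, and what the paper leaves implicit, is the verification that the quotient map is a \emph{bijection}: you correctly identify the normalisation coherence $\sum_{i\in B}p(i)=w_{\univ}(B)$ across tight bases as the point requiring care, and you correctly trace it to the single-exchange computation with Lemma~\ref{lem:Cramer} that already powers the proof of Theorem~\ref{mainthm_section}. Your mention of the $BN$-pair route as the alternative is accurate.

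Two small repairs. First, Theorem~\ref{mainthm_section} and Proposition~\ref{prop_section_local} are stated only for \emph{finite} linear embeddings $\iota\colon\P^r\hookrightarrow\P^n$; applying them verbatim to $\iota_{\univ}\colon K^r\hookrightarrow K^E$ with $E$ infinite is a jump. The underlying arguments (Lemma~\ref{lem:Cramer}, the two-at-a-time basis exchange walk inside the initial matroid $M_u$) are pointwise and do carry over, but you should either say so explicitly or route the argument through the finite case: for any $p\in T_{(E,w_{\univ})}$ and any two tight bases $A,B$, restrict to the finite sub-matroid on $A\cup B$ and apply Proposition~\ref{prop_section_local} there, then use the limit description of $\calL(w_{\univ})$ from Section~\ref{section_infiniteGrassmannian} to conclude globally. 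Second, the appeal to Proposition~\ref{lem:small_circuits} for comparing the topologies is a red herring: the subspace topology on $T_{(E,w_{\univ})}\subseteq\R^E$ and the limit topology on $\calL(w_{\univ})$ already agree simply because both are generated by conditions involving finitely many coordinates (the product topology on $\R^E$ versus the $\varprojlim$ over finite $E'$ of coordinate projections); Proposition~\ref{lem:small_circuits} concerns the defining equations of $\iota_{\univ}$, which is a different statement. What \emph{does} need a word is continuity of the section $\calL(w_{\univ})^{\circ}\to T_{(E,w_{\univ})}$, but this follows since the section is affine-linear on each apartment chart $\Phi_B$ and the charts are compatible on overlaps.
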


A point $p \in T_{(K^r \setminus 0, w_{\univ})}$ given by $p = \Phi_B(u)$ corresponds to seminorm $\seminorm{\cdot}_p : K^r \setminus 0 \to \RR$  given  by
$\seminorm{\cdot}_p = \exp{p(\cdot)}$, which by Lemma~\ref{lem:Cramer} is diagonalizable by $B$ and $u$.
Moreover, $\Phi_B(u)(e) - v(B)$ attains the minimum at least twice for all $e \in E \setminus B$, i.e.~once for $e$ and once for some $i$ in $B$.
Thus, the equations from Definition~\ref{definition_tropicallinearspace} are satisfied for all $\tau = e \cup B$. 
It is straightforward to show that these equations imply the same result for arbitrary $\tau$.
Thus, as remarked by Speyer on \cite[p.6]{Speyer}, the tight span $T_{(E,v)}$ is a lift of $\calL(v)$ to $\R^E$.




\bibliographystyle{amsalpha}
\bibliography{biblio}{}





\end{document}